\newcommand{\nc}{\newcommand}
\nc{\CC}{\mathds{C}}
\nc{\RR}{\mathds{R}}
\nc{\ZZ}{\mathds{Z}}
\nc{\NN}{\mathds{N}}
\nc{\g}{\mathfrak{g}}
\nc{\U}{\mathcal{U}}
\nc{\p}{\mathfrak{p}}
\nc{\D}{\mathcal{D}}
\nc{\Da}{\mathcal{D}^a}
\nc{\Dp}{\mathcal{D}_p}
\nc{\Dap}{\mathcal{D}_p^a}
\nc{\gl}{\widehat{\mathfrak{gl}}_{\infty}^{[m]}}
\nc{\glm}{\mathfrak{gl}_{\infty}^{[m]}}
\nc{\li}{\mathcal{L}}
\nc{\Rm}{\mathrm{R_m}}
\nc{\ho}{\mathcal{O}}
\nc{\fis}{\varphi_s^{[m]}}
\nc{\fiss}{\widehat{\varphi}_s^{[m],\,\pm}}
\numberwithin{equation}{section}
\theoremstyle{plain}
\newtheorem{teor}{Theorem}[section]
\newtheorem{prop}[teor]{Proposition}
\newtheorem{corol}[teor]{Corollary}
\newtheorem{lema}[teor]{Lemma}
\newtheorem{defi}[teor]{Definition}
\newtheorem{obse}[teor]{Remark}
\title{Quasifinite Representations of Classical Lie subalgebras of $W_{\infty,\,p}$}
\author{Jos\'{e} I. Garc\'{i}a and Jos\'{e} I. Liberati}
\date{27 June 2012}
\begin{document}

\maketitle

\begin{abstract}
We show that there are exactly two anti-involution $\sigma_{\pm}$ of the algebra of differential operators on the circle that are a multiple of $p(t\partial_t)$ preserving the principal gradation ($p\in\CC[x]$ non-constant). We classify the irreducible quasifinite highest weight representations of the central extension $\widehat{\D}_p^{\pm}$ of the Lie subalgebra fixed by $-\sigma_{\pm}$. The most important cases are the subalgebras $\widehat{\D}_x^{\pm}$ of $W_{\infty}$, that are obtained when $p(x)=x$. In these cases we realize the irreducible quasifinite highest weight modules in terms of highest weight representation of the central extension of the Lie algebra of infinite matrices with finitely many non-zero diagonals over the algebra $\CC[u]/(u^{m+1})$ and its classical Lie subalgebras of $C$ and $D$ types.
\end{abstract}

\section{Introduction}
The universal central extension $\widehat{\mathcal{D}}$ of the Lie algebra of differential operators on the circle (described first in \cite{kp}) is usually denoted by physicists as $W_{1+\infty}$, and it is one of the $\mathcal{W}$- infinity algebras that naturally arise in various physical theories, such as conformal field theory, the theory of quantum Hall effect, etc.

 The difficulty in understanding the representation theory of a Lie algebra of this kind is that although it admits a $\mathbb{Z}$-gradation (and thus the associated triangular decomposition), each of the graded subspaces is still infinite dimensional, and therefore the study  of highest weight modules with the finiteness requirement on the dimensions of their graded subspaces (which we will refer to as quasifiniteness condition)  become
 a non-trivial problem.

 The study of quasifinite highest weight modules of $\widehat{\mathcal{D}}$ was initiated by Kac and Radul \cite{kr1}. They were able to give a characterization of its irreducible quasifinite highest weight representations and these modules were constructed in terms of modules of the Lie algebra $\widehat{\mathfrak{gl}}_{\infty}^{[m]}$ which is the central extension of the Lie algebra ${\mathfrak{gl}}_{\infty}^{[m]}$ of infinite matrices with finitely many non-zero diagonals taking values in the truncated polynomial algebra $R_m=\mathbb{C}[u]/(u^{m+1})$. On the basis of this analysis, further studies were made within the framework of vertex algebra theory for the $\widehat{\mathcal{D}}$ algebra \cite{fkrw,kr2}, and for its matrix version \cite{bkly}. In \cite{kl} a general approach to the theory  of quasifinite highest weight modules over $\mathbb{Z}$-graded Lie algebras was developed, which makes  the basic ideas of \cite{kr1} much clearer, and these general results will be applied here. In \cite{afmo} and \cite{kl}, they develop the theory of quasifinite highest weight representations of the subalgebras $\widehat{\mathcal{D}}_p$ of  $\widehat{\mathcal{D}}$, where $\widehat{\mathcal{D}}_p$ ($p\in\mathbb{C}[x]$) is the central extension of the Lie algebra $\mathcal{D}p(t\partial_t)$ of differential operators on the circle that are a multiple of $p(t\partial_t)$. The most important of these subalgebras is $W_\infty=\widehat{\mathcal{D}}_x$ that is obtained by taking $p(x)=x$. Classical Lie subalgebras of $\widehat{\mathcal{D}}$ appear by the study of anti-involutions on $\widehat{\mathcal{D}}$. The orthogonal subalgebras of $\widehat{\mathcal{D}}$ were studied in \cite{kwy}. The symplectic subalgebra of $\widehat{\mathcal{D}}$ was considered in \cite{b} in relation to number theory, and the  representation theory was developed in \cite{bl}.

 The idea of the present work is to extend some results from \cite{kwy} to the family of subalgebras $\widehat{\mathcal{D}}_p$. More precisely, in section 2 we show that  there are exactly two, up to conjugation, anti-involutions $\sigma_\pm$ of $\mathcal{D}_p^a$ preserving the principal gradation. In section 3, we classify the irreducible quasifinite highest weight representations of the central extension $\widehat{\mathcal{D}}_p^\pm$ of the Lie subalgebra of $\widehat{\mathcal{D}}_p$ fixed by $-\sigma_\pm$. In particular, if $p=1$, from our results we recover several theorems  obtained in \cite{kwy}. The other most important cases are the subalgebras $\widehat{\mathcal{D}}_x^\pm$ of $W_{\infty}$, that are obtained by taking $p(x)=x$. For these cases, in section 4 we study the interplay between $\widehat{\mathcal{D}}_x^\pm$ and some subalgebras of $\widehat{\mathfrak{gl}}_{\infty}^{[m]}$, and in section 5, we realize the irreducible quasifinite highest weight representations in terms of highest weight modules of the Lie algebra $\widehat{\mathfrak{gl}}_{\infty}^{[m]}$ and its classical Lie subalgebras of  $C$ and $D$ types.  Observe that the symplectic subalgebra of $\widehat{\mathcal{D}}$ considered in \cite{b} and  \cite{bl} is a particular case of our general results, it corresponds to $\widehat{\mathcal{D}}_x^+$.

\section{Anti-Involution of $\Dap$ Preserving its Principal Gradation}

\qquad Let $\Da$ be the associative algebra of regular differential operators on the circle, i.e. the operators on $\CC[t,t^{-1}]$ of the form
\[E=e_k(t)\partial_t^k+e_{k-1}(t)\partial_t^{k-1}+\cdots+e_0(t), \ \text{where} \ e_i(t)\in\CC[t,t^{-1}],\]
the elements
\[J_k^l=-t^{k+l}\partial_t^l \quad (l\in\ZZ_+, \ k\in\ZZ),\]
form its basis, where $\partial_t$ denotes $\frac{d}{dt}$. Another basis of $\Da$ is
\[L_k^l=-t^kD^l \quad (l\in\ZZ_+, \ k\in\ZZ),\]
where $D=t\partial_t$. Let $\D$ denote the Lie algebra obtained from $\Da$ by taking the usual bracket, i.e.
\[ [t^rf(D),t^sg(D)]=t^{r+s}(f(D+s)g(D)-f(D)g(D+r)), \]
where  $f, \ g\in\CC[x]$ and $s, \ t\in\ZZ$. Given $p\in\CC[x]$, consider the following family of subalgebras of $\Da$,
\[\Dap:=\Da p(D)\]
and denote by $\Dp$ the associated Lie algebra (cf. \cite{kl}).

Letting $\mathrm{wt} \ t^kf(D)=k$ defines the principal $\ZZ$-gradation of $\Da$ and $\Dap$:
\[\Dap=\bigoplus\limits_{j\in\ZZ}(\Dap)_j, \ \text{where} \ (\Dap)_j=\{t^jf(D)p(D):f\in\CC[x]\}.\]

An \textit{anti-involution} $\sigma$ of $\Dap$ is an involutive anti-automorphism of $\Dap$, i.e. $\sigma:\Dap\rightarrow\Dap$ with $\sigma^2=Id, \ \sigma(bX+Y)=b\sigma(X)+\sigma(Y) \ and \ \sigma(XY)=\sigma(Y)\sigma(X), \ \text{where} \ X,Y\in\Dap, \ b\in\CC.$

\medskip
The main result of this section is the following theorem with the classification of all anti-involutions of $\Dap$ that preserve the principal $\ZZ$-gradation.

\begin{teor} \label{clasificacion antinvolution}
Let $p\in \CC[x]$ be a non-zero polynomial. There exist an anti-involution in $\Dap$ that preserve the principal $\ZZ$-gradation if and only if exist $c \in \CC \ \text{such that} \ p(x)=\varepsilon p(-x+c)$, where $\varepsilon=(-1)^{deg(p)}$.

If $deg(p)\geqslant1$, then $c$ is unique and there exist only two anti-involutions given by
\begin{equation} \sigma_{\pm}(t^kf(D)p(D))=\varepsilon(\pm t)^kf(-D-k+c)p(D). \label{formulasigma} \end{equation}

If $deg(p)=0$, then $c$  is a free parameter, and there are only two families of anti-involutions given by
\eqref{formulasigma}.

\end{teor}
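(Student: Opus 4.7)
The plan is to exploit gradation preservation. Since $\sigma((\Dap)_k) \subseteq (\Dap)_k = t^k \CC[D] p(D)$, the anti-involution $\sigma$ is encoded by a family of $\CC$-linear maps $\Phi_k : \CC[x] \to \CC[x]$ determined by $\sigma(t^k f(D) p(D)) = t^k \Phi_k(f)(D) p(D)$. Using the product formula $t^k h(D) p(D) \cdot t^m g(D) p(D) = t^{k+m} h(D+m) p(D+m) g(D) p(D)$, the anti-homomorphism axiom translates into the functional equation
\[
\Phi_{k+m}\bigl(h(\cdot+m)\, p(\cdot+m)\, g\bigr)(D) = \Phi_m(g)(D+k)\, p(D+k)\, \Phi_k(h)(D), \qquad (\ast)
\]
while $\sigma^2 = Id$ becomes $\Phi_k^2 = Id$. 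I would solve $(\ast)$ by first pinning down $\Phi_0$ and then propagating to all $k$.

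The case $k = m = 0$ of $(\ast)$ gives $\Phi_0(hgp) = \Phi_0(h) \Phi_0(g) p$. Setting $\lambda := \Phi_0(1)$, I would first show $\lambda \in \CC^\ast$ with $\lambda^2 = 1$: the identities $\Phi_0(Fp) = \lambda p\, \Phi_0(F)$ and $\Phi_0^2 = Id$ combine into $F = \lambda \Phi_0(\lambda \Phi_0(F))$, which forces $\lambda$ to divide every polynomial in $\CC[x]$ and hence be a unit, and $\Phi_0(\lambda) = 1$ together with linearity yields $\lambda^2 = 1$. Consequently $\hat\Phi_0 := \lambda \Phi_0$ is an involutive $\CC$-algebra endomorphism of $\CC[x]$, so $\hat\Phi_0(F)(x) = F(q(x))$ for some polynomial $q$ with $q \circ q = x$, forcing $q(x) = x$ or $q(x) = -x + c$. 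I would rule out $q(x) = x$ by applying $(\ast)$ with $m = -k$, $h = g = 1$: this forces the $\mu_k := \Phi_k(1)$ to be constants satisfying $p(D-k) = \mu_{-k} \mu_k\, p(D+k)$, a contradiction from comparing subleading coefficients once $\deg p \geq 1$ and $k \neq 0$. Hence $\Phi_0(F)(x) = \lambda F(-x+c)$, and the identity $\Phi_0(p) = \lambda^2 p = p$ combined with leading-coefficient comparison in $p(x) = \lambda p(-x+c)$ gives $\lambda = (-1)^{\deg p} = \varepsilon$ and the necessary condition $p(x) = \varepsilon p(-x+c)$.

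With $\Phi_0$ fixed, I would next determine $\Phi_k$ for all $k$. Plugging $h = 1$, $m = -k$ into $(\ast)$ and using $p(-y+c) = \varepsilon p(y)$, the equation reduces to $g(-D+c) = \Phi_{-k}(g)(D+k)\, \mu_k(D)$, which forces $\mu_k$ to be constant and $\Phi_k(g)(y) = \mu_k\, g(-y-k+c)$. The involution condition $\Phi_k^2 = Id$ then pins $\mu_k \in \{\pm 1\}$, and substituting these forms back into $(\ast)$ for arbitrary $k, m$ reduces the whole identity to the scalar cocycle $\mu_{k+m} = \varepsilon\, \mu_k \mu_m$. Thus $k \mapsto \varepsilon \mu_k$ is a group homomorphism $\ZZ \to \{\pm 1\}$, determined by its value at $k = 1$; the two choices yield $\mu_k = \varepsilon(\pm 1)^k$, giving exactly the two formulas \eqref{formulasigma} for $\sigma_\pm$.

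Sufficiency is then a direct computation checking $\sigma_\pm^2 = Id$ and the anti-homomorphism property via the identity $p(-x+c) = \varepsilon p(x)$. For uniqueness of $c$ when $\deg p \geq 1$, the relation $p(x) = \varepsilon p(-x+c) = \varepsilon p(-x+c')$ implies $p(y + c - c') = p(y)$, making any nonconstant $p$ periodic, which is impossible; when $\deg p = 0$ the condition is automatic and $c$ remains free, yielding the two families of anti-involutions in this case. The main delicate step I anticipate is the opening argument that $\lambda = \Phi_0(1)$ must be a constant rather than a nontrivial polynomial; this rigidity is what turns $\hat\Phi_0$ into a genuine ring endomorphism and underpins the entire classification.
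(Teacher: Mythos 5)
Your overall strategy is sound and, despite the different packaging, is essentially the paper's own: your $\Phi_k$ and $\mu_k$ are the paper's $\sigma_0$ and $\varepsilon_k$, your functional equation $(\ast)$ is their Lemma \ref{propiedades sigma0}\eqref{propiedades sigma0 c}, and your scalar cocycle $\mu_{k+m}=\varepsilon\mu_k\mu_m$ is exactly their relation \eqref{(52)}. Where you genuinely diverge is the treatment of the degree-zero piece: you upgrade $\lambda\Phi_0$ to a unital ring endomorphism of $\CC[x]$ (this does follow from $\Phi_0(hgp)=\Phi_0(h)\Phi_0(g)p$ by comparing the two factorizations $\hat\Phi_0(hg)\hat\Phi_0(p)=\hat\Phi_0(hgp)=\hat\Phi_0(h)\hat\Phi_0(g)\hat\Phi_0(p)$ and cancelling $\hat\Phi_0(p)=\lambda p\neq 0$) and then invoke the classification of involutive substitutions $q\circ q=x$, whereas the paper runs an induction to get $\sigma_0(t^lD^i)=t^l\varepsilon_l(\varepsilon_0\sigma_0(D)-l)^i$ and analyzes $\sigma_0(D)=aD+b$ directly. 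Your version is cleaner, and all the steps I checked (the unit argument for $\lambda$, the propagation $\Phi_k(g)(y)=\mu_k\,g(-y-k+c)$, the reduction of $(\ast)$ to the cocycle, the uniqueness of $c$ via periodicity) are correct.

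There is, however, one genuine gap: your exclusion of the substitution $q(x)=x$ rests on comparing subleading coefficients of $p(D-k)=\mu_{-k}\mu_k\,p(D+k)$, and as you yourself note this only produces a contradiction when $\deg p\geq 1$. For $\deg p=0$ the identity is vacuous, so the possibility $\Phi_0=\lambda\,\mathrm{Id}$ is never eliminated, and the classification in the constant case --- which is part of the statement, and is the case recovering \cite{kwy} --- is not actually established. The sentence ``when $\deg p=0$ the condition is automatic and $c$ remains free'' presupposes that $\sigma$ already has the form \eqref{formulasigma}, which is precisely what is missing. The fix is short: if $q(x)=x$, the same substitution $h=1$, $m=-k$ in $(\ast)$ yields $\Phi_k(g)(D)=\mu_k\,g(D+k)$ with $\mu_k$ a nonzero constant, and then $\Phi_k^2=\mathrm{Id}$ forces $g(D+2k)=\mu_k^{-2}g(D)$ for all $g$, which fails for $g(x)=x$ and $k\neq 0$. (The paper avoids the case split altogether by deriving $\varepsilon_0 a=-1$ from $\sigma_0^2(t^kD)=t^kD$, an argument that uses $k\neq 0$ and is uniform in $\deg p$; note that any argument confined to the degree-zero component must fail here, since $f(D)\mapsto f(D)$ genuinely is an involutive anti-automorphism of the abelian algebra $(\Da)_0$.)
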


\medskip
\begin{obse} When $deg(p)=0$, we recover the classification obtained in Proposition 2.1 \cite{kwy}. \end{obse}

In the last part of this section we present the proof of Theorem \ref{clasificacion antinvolution} throughout several lemmas.

\smallskip
Let $\sigma:\Dap\rightarrow\Dap$ be an anti-involution that preserve the principal gradation, then $\sigma$ induce a map $\sigma_0:\Da\rightarrow\Da$ as follows
\begin{equation} \sigma(t^kf(D)p(D))=\sigma_0(t^kf(D))p(D). \label{sigma=sigma0p} \end{equation}
It's clear that $\sigma_0$ preserves the principal gradation and furthermore, the characterization of $\sigma$ is equivalent to the characterization of $\sigma_0$.

\begin{lema}\label{propiedades sigma0} Let $f, \ g \in\CC[D]$ and $k, \ m \in\ZZ$. Then
 \begin{enumerate}[(a)]
  \item $\sigma_0$ is $\CC$-linear; \label{propiedades sigma0 a}
  \item $\sigma_0^2=Id$;\label{propiedades sigma0 b}
  \item $\sigma_0(t^{k+m}f(D+m)p(D+m)g(D))=\sigma_0(t^mg(D))p(D)\sigma_0(t^kf(D))$; \label{propiedades sigma0 c}
  \item $\sigma_0(f(D)g(D)p(D))=\sigma_0(f(D))\sigma_0(g(D))p(D)$ \label{propiedades sigma0 d}.
 \end{enumerate}
\end{lema}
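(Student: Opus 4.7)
The overall strategy is to transfer each of the four properties from $\sigma$ (which is a $\CC$-linear anti-involution of $\Dap$) to $\sigma_0$ (defined only as a map on $\Da$) by systematically applying $\sigma$ to suitable elements of $\Dap$ and then using the defining relation \eqref{sigma=sigma0p}. The key auxiliary fact is that $\Da$ is a domain (no zero divisors), so given any nonzero $p$, right multiplication by $p(D)$ on $\Da$ is injective and we can legitimately cancel $p(D)$ from an equality of the form $X\,p(D)=Y\,p(D)$.

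For part (\ref{propiedades sigma0 a}), $\CC$-linearity of $\sigma_0$ follows immediately by applying $\sigma$ to a $\CC$-linear combination $b\,t^kf(D)p(D)+t^mg(D)p(D)$, using $\CC$-linearity of $\sigma$, and invoking \eqref{sigma=sigma0p} on each summand. For part (\ref{propiedades sigma0 b}), I would set $X=t^kf(D)$, note that because $\sigma$ preserves the principal gradation, so does $\sigma_0$, whence $\sigma_0(X)$ again lies in the degree-$k$ component of $\Da$ and hence $\sigma_0(X)p(D)\in\Dap$. Applying $\sigma$ a second time and using \eqref{sigma=sigma0p} twice yields $\sigma_0^2(X)\,p(D)=\sigma^2(X\,p(D))=X\,p(D)$, and cancelling $p(D)$ on the right gives $\sigma_0^2=\mathrm{Id}$.

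For part (\ref{propiedades sigma0 c}), the plan is to compute $\sigma$ of the product of $A=t^kf(D)p(D)$ and $B=t^mg(D)p(D)$ in two ways. Multiplying $A\cdot B$ out using the commutation rule $D\,t^m=t^m(D+m)$ gives
\[
A\cdot B = t^{k+m}\,f(D+m)\,p(D+m)\,g(D)\,p(D),
\]
so $\sigma(AB)=\sigma_0\bigl(t^{k+m}f(D+m)p(D+m)g(D)\bigr)p(D)$ by \eqref{sigma=sigma0p}. On the other hand, since $\sigma$ is an anti-homomorphism, $\sigma(AB)=\sigma(B)\sigma(A)=\sigma_0(t^mg(D))\,p(D)\,\sigma_0(t^kf(D))\,p(D)$. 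Equating the two expressions and cancelling the rightmost $p(D)$ in $\Da$ yields the identity of part (\ref{propiedades sigma0 c}).

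Finally, part (\ref{propiedades sigma0 d}) is the specialization $k=m=0$ of (\ref{propiedades sigma0 c}), giving $\sigma_0(f(D)g(D)p(D))=\sigma_0(g(D))\,p(D)\,\sigma_0(f(D))$; since $\sigma_0$ preserves the gradation, both $\sigma_0(f(D))$ and $\sigma_0(g(D))$ lie in $(\Da)_0=\CC[D]$, and thus commute with each other and with $p(D)$, so the right-hand side can be rearranged into $\sigma_0(f(D))\sigma_0(g(D))p(D)$. The only real subtlety in this proof is the right-cancellation of $p(D)$, which must be justified by the domain property of $\Da$; everything else is a direct translation of the anti-involution axioms through \eqref{sigma=sigma0p}.
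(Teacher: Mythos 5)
Your proof is correct and follows essentially the same route as the paper: parts (a) and (b) from the anti-involution axioms and \eqref{sigma=sigma0p}, part (c) by computing $\sigma$ of the product $t^kf(D)p(D)\cdot t^mg(D)p(D)$ in two ways, and part (d) as the degree-zero specialization using commutativity of $(\Da)_0$. Your explicit appeal to right-cancellation of $p(D)$ (via $\Da$ being a domain) is a justification the paper leaves implicit, but it is the same argument.
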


\begin{proof}
Using that $\sigma$ is an anti-involution, \eqref{propiedades sigma0 a} and \eqref{propiedades sigma0 b} follows immediately. For $f, \ g \ \in\CC[D], \ k, \ m\in\ZZ$ we have
\begin{equation} \sigma(t^kf(D)p(D)t^mg(D)p(D))=\sigma_0(t^mg(D))p(D)\sigma_0(t^kf(D))p(D) \label{(17)} \end{equation}
and
\begin{equation} \sigma(t^kf(D)p(D)t^mg(D)p(D))=\sigma_0(t^{k+m}f(D+m)p(D+m)g(D))p(D) \label{(18)} \end{equation}
obtaining \eqref{propiedades sigma0 c}. Observe that \eqref{propiedades sigma0 d} follows from \eqref{propiedades sigma0 c} since $(\Da)_0$ is an abelian sub-algebra of $\Da$ and $\sigma_0$ preserve the gradation, finishing the proof.
\end{proof}

We shall need the following notation: $\sigma_0(t^k)=t^k \varepsilon_k$, with $\varepsilon_k$ in $\CC[D]$.

\begin{lema}\label{determino sigma0}
 \begin{enumerate}[(a)]
  \item For all $k\in\ZZ$, we have $\varepsilon_k=\pm1$. \label{determino sigma0 a}
  \item There exist $c\in\CC$ such that for all $k\in\ZZ$ and $f\in\CC[D]$, we have
  \[ \sigma_0(t^kf(D))=\varepsilon_k t^k f(-D-k+c). \] \label{determino sigma0 b}
 \end{enumerate}
\end{lema}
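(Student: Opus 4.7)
The plan is to first establish scalar-valued identities governing $\sigma_0$ and then solve them. Writing $\sigma_0(t^kf(D))=t^k\phi_k(f)(D)$ defines $\CC$-linear maps $\phi_k:\CC[D]\to\CC[D]$, with $\varepsilon_k=\phi_k(1)$ and $\phi_k^2=\mathrm{Id}$ by Lemma \ref{propiedades sigma0}(b). Specializing Lemma \ref{propiedades sigma0}(c) to $m=0,\,g=1$ (resp.\ $m=0,\,f=1$) and commuting $t^k$ past $\CC[D]$ yields
\begin{equation*}
\phi_k(fp)=\varepsilon_0(D+k)\,p(D+k)\,\phi_k(f)\quad\text{and}\quad\phi_k(pg)=\phi_0(g)(D+k)\,p(D+k)\,\varepsilon_k(D);
\end{equation*}
call these ($\ast$) and ($\ast\ast$). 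Since $fp=pf$ in $\CC[D]$, cancelling $p(D+k)$ in the domain $\CC[D]$ gives the comparison relation
\begin{equation*}
\varepsilon_0(D+k)\,\phi_k(g)(D)=\phi_0(g)(D+k)\,\varepsilon_k(D).\tag{$\dagger$}
\end{equation*}

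The main obstacle is forcing $\varepsilon_0(D)\in\CC$. From ($\ast$) with $f=1$, $\phi_0(p)=\varepsilon_0(D)^2\,p(D)$; applying ($\ast$) again with $f=\varepsilon_0^2$ gives $\phi_0(\varepsilon_0^2\,p)=\varepsilon_0(D)\,p(D)\,\phi_0(\varepsilon_0^2)(D)$. But $\phi_0^2(p)=p$, so the left-hand side equals $p$. Cancelling $p$ yields $\varepsilon_0(D)\,\phi_0(\varepsilon_0^2)(D)=1$, and since $\phi_0(\varepsilon_0^2)\in\CC[D]$ we conclude $\varepsilon_0\in\CC^\times$. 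Then $\phi_0^2(1)=\phi_0(\varepsilon_0)=\varepsilon_0^2$ must equal $1$, so $\varepsilon_0=\pm1$.

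With $\varepsilon_0$ now a scalar, I apply Lemma \ref{propiedades sigma0}(c) for general $k,m$ in two ways (setting $g=f(D+m)$ in the ``$f=1$'' specialization so its left-hand side matches that of the ``$g=1$'' specialization); cancelling $p(D+k)$ produces $\phi_m(f(D+m))(D+k)\,\varepsilon_k(D)=\varepsilon_m(D+k)\,\phi_k(f)(D)$, and setting $k=0$ gives $\phi_m(f(D+m))(D)=\varepsilon_0\,\varepsilon_m(D)\,\phi_0(f)(D)$. Combining this (with $f$ replaced by $g(D-m)$) with ($\dagger$) eliminates the $\varepsilon$'s to yield the shift-intertwining $\phi_0(g)(D+k)=\phi_0(g(D-k))(D)$. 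Applied to $g=D$, $\CC$-linearity gives $Q(D+k)=Q(D)-k\varepsilon_0$ for all $k\in\ZZ$, where $Q=\phi_0(D)$; matching powers of $k$ forces $Q(D)=\varepsilon_0(-D+c)$ for some $c\in\CC$. Next, Lemma \ref{propiedades sigma0}(d) combined with ($\ast$) yields the twisted multiplicativity $\phi_0(fg)=\varepsilon_0\,\phi_0(f)\,\phi_0(g)$; induction on $n$ then gives $\phi_0(D^n)=\varepsilon_0(-D+c)^n$, hence $\phi_0(h)(D)=\varepsilon_0\,h(-D+c)$ for every $h\in\CC[D]$.

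For arbitrary $m$, substituting $f(D)=h(D-m)$ into $\phi_m(f(D+m))(D)=\varepsilon_0\,\varepsilon_m(D)\,\phi_0(f)(D)$ and inserting the formula for $\phi_0$ yields $\phi_m(h)(D)=\varepsilon_m(D)\,h(-D-m+c)$ (using $\varepsilon_0^2=1$). Finally, $\phi_m^2(1)=1$ becomes the polynomial identity $\varepsilon_m(D)\,\varepsilon_m(-D-m+c)=1$ in $\CC[D]$, forcing $\varepsilon_m\in\CC$ with $\varepsilon_m^2=1$, hence $\varepsilon_m=\pm1$. This simultaneously establishes parts (a) and (b).
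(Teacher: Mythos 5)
Your proof is correct. It shares its endpoints with the paper's argument---the opening derivation of $\varepsilon_0=\pm1$ from $\sigma_0^2(p)=p$ and the closing derivation of $\varepsilon_k=\pm1$ from the polynomial identity $\varepsilon_k(D)\,\varepsilon_k(-D-k+c)=1$ are essentially the same---but the middle is organized genuinely differently. The paper proves by induction on $i$ that $\sigma_0(t^lD^i)=t^l\varepsilon_l(\varepsilon_0\sigma_0(D)-l)^i$, keeps $\sigma_0(D)=aD+b$ as an undetermined linear polynomial with $a=\pm1$, and only at the very end pins down the sign via the computation $\sigma_0^2(t^kD)=t^kD+(\varepsilon_0a+1)k$. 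You instead extract the shift-equivariance $\phi_0(g)(D+k)=\phi_0(g(D-k))(D)$ from two specializations of Lemma \ref{propiedades sigma0}(c), which applied to $g=D$ forces $\phi_0(D)=\varepsilon_0(-D+c)$ with the correct sign already built in; twisted multiplicativity $\phi_0(fg)=\varepsilon_0\phi_0(f)\phi_0(g)$ on the degree-zero part then replaces the paper's induction, and the conjugation relation $\phi_m(h)(D)=\varepsilon_m(D)h(-D-m+c)$ transports the formula to all degrees. Your route buys a cleaner determination of the sign (no separate $\varepsilon_0a=-1$ step), at the cost of juggling a few more intermediate identities. One small point you should make explicit: cancelling $\varepsilon_m(D)$ when you ``eliminate the $\varepsilon$'s'' requires $\varepsilon_m\neq0$, which holds because $\sigma_0$ is injective ($\sigma_0^2=\mathrm{Id}$), so $\sigma_0(t^m)=t^m\varepsilon_m(D)\neq0$.
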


\begin{proof}
Using Lemma \ref{propiedades sigma0}.\eqref{propiedades sigma0 d} with $f=g=1$ we have
\begin{equation} \sigma_0(p(D))=\varepsilon_0^2 p(D) \label{s(p)=e^2p} \end{equation}
then by Lemma \ref{propiedades sigma0}.\eqref{propiedades sigma0 b} and \eqref{s(p)=e^2p}, $p(D)=\sigma_0(\varepsilon_0^2p(D))$ and using again Lemma \ref{propiedades sigma0}.\eqref{propiedades sigma0 d} with $f=1$ and $\ g=\varepsilon_0^2$, we obtain
\begin{equation} p(D)=\varepsilon_0 \sigma_0(\varepsilon_0^2) p(D).\label{(51)} \end{equation}
Then from \eqref{(51)}, $1=\varepsilon_0 \sigma_0(\varepsilon_0^2)$ therefore $\varepsilon_0$ is a constant. Moreover by Lemma \ref{propiedades sigma0}.\eqref{propiedades sigma0 a} and \eqref{propiedades sigma0 b}, we have $1=\sigma_0^2(1)=\varepsilon_0^2$, obtaining
 \begin{equation} \varepsilon_0=\pm1. \label{(step1)} \end{equation}

Now we shall prove that
\begin{equation}\sigma_0(t^l D^i)=t^l\varepsilon_l\,(\varepsilon_0\sigma_0(D)-l)^i \ \text{for all} \ l\in\ZZ, \ \text{and} \ i\in\ZZ_+, \label{(step2)} \end{equation}
by using induction in $i$. The case $i=0$ follows by notation. Now, using Lemma \ref{propiedades sigma0}.\eqref{propiedades sigma0 c} with $t^kf(D)=(D-l), \ t^mg(D)=t^lD^i$, we have
\begin{eqnarray}
 \sigma_0(t^lD^{i+1}p(D+l))&=&\sigma_0(t^lD^i)p(D)\sigma_0(D-l) \notag \\
                           &=&t^l\varepsilon_l\,(\varepsilon_0\sigma_0(D)-l)^ip(D)(\sigma_0(D)-\varepsilon_0l) \label{(1)}
\end{eqnarray}
on the other hand, using again Lemma \ref{propiedades sigma0}.\eqref{propiedades sigma0 c} with $t^kf(D)=1, \ t^mg(D)=t^lD^{i+1}$, we obtain
\begin{equation} \sigma_0(t^lD^{i+1}p(D+l))=\sigma_0(t^lD^{i+1})p(D)\varepsilon_0. \label{(2)} \end{equation}
Comparing \eqref{(1)} with \eqref{(2)}, and using \eqref{(step1)}, we obtain \eqref{(step2)}.

Note that, given $f\in\CC[x]$ and using the linearity of $\sigma_0$ together with \eqref{(step2)}, we have
\begin{equation} \sigma_0(t^kf(D))=t^k\varepsilon_kf(\varepsilon_0\sigma_0(D)-k). \label{(3)} \end{equation}

Since $\sigma_0$ preserves the $\ZZ$-gradation, we can assume that $\sigma_0(D)=g(D)$ for some $g\in\CC[x]$. Then by Lemma \ref{propiedades sigma0}.\eqref{propiedades sigma0 b} and \eqref{(3)}, we have $D=\sigma_0^2(D)=\varepsilon_0g(\varepsilon_0g(D))$. Using \eqref{(step1)}, it follow that $deg(g)^2=1$ and therefore $deg(g)=1$. Then $\sigma_0(D)=aD+b$ for some $a, \ b \in\CC$ with $a\neq0$. Finally,
\[ D=\sigma_0^2(D)=\sigma_0(aD+b)=a^2D+(a+\varepsilon_0)b, \]
obtaining
\begin{equation} \sigma_0(D)=aD+b \ \text{with} \ a=\pm1, \ (a+\varepsilon_0)b=0. \label{(step3)} \end{equation}

Using Lemma \ref{propiedades sigma0}.\eqref{propiedades sigma0 b} and \eqref{(3)}, for all $k\in\ZZ$,
\begin{equation} t^k=\sigma_0^2(t^k)=\sigma_0(t^k\varepsilon_k)=t^k\varepsilon_k(D)\varepsilon_k(\varepsilon_0\sigma_0(D)-k), \label{(20)} \end{equation}
then $deg(\varepsilon_k)=0$ and furthermore $\varepsilon_k^2=1$, finishing the proof of \eqref{determino sigma0 a}.

Observe that, using \eqref{(3)}, \eqref{(step3)} and Lemma \ref{determino sigma0}.\eqref{determino sigma0 a} we have
\[ \sigma_0(t^kf(D))=\varepsilon_k t^k f(\varepsilon_0aD-k+c) \quad \text{where} \quad c=\varepsilon_0b. \]
Hence, in order to finish the proof of \eqref{determino sigma0 b}, it remains to see that $\varepsilon_0a=-1$. But
\begin{eqnarray*}
  t^kD&=&\sigma_0^2(t^kD) \\
      &=&\sigma_0(\varepsilon_k t^k (\varepsilon_0aD-k+\varepsilon_0b)) \\
      &=&t^kD+(\varepsilon_0a+1)k
\end{eqnarray*}
for all $k$ in $\ZZ$, therefore $\varepsilon_0a+1=0$.
\end{proof}

\begin{proof}[Proof Theorem \ref{clasificacion antinvolution}]
Let $\sigma:\Dap\rightarrow\Dap$ be an anti-involution that preserve the principal $\ZZ$-gradation. From \eqref{sigma=sigma0p} and Lemma \ref{determino sigma0}.\eqref{determino sigma0 b}
\begin{equation} \sigma(t^kf(D)p(D))=\varepsilon_kt^kf(-D-k+c)p(D) \label{(21)} \end{equation}
for some $c\in\CC$.

Moreover from \eqref{s(p)=e^2p} and Lemma \ref{determino sigma0}.\eqref{determino sigma0 a} we obtain $\sigma_0(p(D))=p(D)$. Then by Lemma \ref{determino sigma0}.\eqref{determino sigma0 b}, we have that $p$ must satisfy
\begin{equation} p(D)=\varepsilon_0p(-D+c) \ \text{for some} \ c\in\CC. \label{(7)} \end{equation}

If $n=deg(p)>0$, by considering the coefficients of $D^{n}$ and $D^{n-1}$ in both sides of \eqref{(7)}, we have $\varepsilon_0=(-1)^n$ and $c=\dfrac{2c_{n-1}}{nc_n}$ respectively, where $p(x)=\sum_{i=0}^nc_ix^i$, so $c$ is totally determined by the coefficients of $p$.

If $deg(p)=0$, using \eqref{(7)} we get $\varepsilon_0=1$ and $c$ is a free parameter.

On the other hand,
\begin{equation} \sigma(t^kp(D)t^mp(D))=\sigma(t^mp(D))\sigma(t^kp(D)) \label{(4)} \end{equation}
with
\begin{eqnarray}
 \sigma(t^kp(D)t^mp(D))&=&\sigma(t^{k+m}p(D+m)p(D)) \notag \\
                       &=&\varepsilon_{k+m}t^{k+m}p(-D-k+c)p(D) \notag \\
                       &=&\varepsilon_0\varepsilon_{k+m}t^{k+m}p(D+k)p(D), \label{(5)}
\end{eqnarray}
and
\begin{eqnarray}
 \sigma(t^mp(D))\sigma(t^kp(D))&=&\varepsilon_m\varepsilon_kt^mp(D)t^kp(D) \notag \\
                               &=&\varepsilon_m\varepsilon_kt^{k+m}p(D+k)p(D). \label{(6)}
\end{eqnarray}
From \eqref{(5)} and \eqref{(6)} we have that \eqref{(4)} holds if and only if
\begin{equation} \varepsilon_{k+m}=\varepsilon_0\varepsilon_k\varepsilon_m, \label{(52)} \end{equation}
and this is true for all $k, \ m\in\ZZ$. Then, if we take $k=1$ and $m=-1$ from \eqref{(52)} and Lemma \ref{determino sigma0}.\eqref{determino sigma0 a}, we have that $\varepsilon_1=\varepsilon_{-1}$ and by induction
\begin{equation} \varepsilon_k=\varepsilon_0(\varepsilon_0\varepsilon_1)^k \ \text{for all} \ k\in\ZZ. \label{(19)} \end{equation}
Since $\varepsilon_0$ is totally determined, we could have only two anti-involutions depending on the choice of $\varepsilon_1$. Using \eqref{(21)} and \eqref{(19)}, we have the following cases:

\medskip
$\cdot$ if $\varepsilon_1=\varepsilon_0$, then $\sigma(t^kf(D)p(D))=\varepsilon_0t^kf(-D-k+c)p(D)$;

\smallskip
$\cdot$ if $\varepsilon_1=-\varepsilon_0$, then $\sigma(t^kf(D)p(D))=\varepsilon_0(-t)^kf(-D-k+c)p(D)$.

\medskip
Reciprocally, it is straightforward to check that if $p$ satisfies \eqref{(7)} then the two previous cases are anti-involutions, finishing the proof.
\end{proof}

\section{Quasifinite Highest-Weight Modules over $\widehat{\D}_p^{\pm}$}

\qquad Let $p\in\CC[x]$ with $n=deg(p)$ that satisfies Theorem \ref{clasificacion antinvolution}, i.e. $p(x)=(-1)^np(-x+c)$ for some $c\in\CC$. We denote by $\D_p^{\pm}$, the Lie subalgebra of $\D_p$ consisting of its minus $\sigma_{\pm}$-fixed points i.e.,
\[ \Dp^{\pm}=\{d\in\Dp:\sigma_{\pm}(d)=-d\}. \]
It inherits a $\ZZ$-gradation from $\Dp$ since $\sigma_{\pm}$ preserves the principal $\ZZ$-gradation of $\Dp$, then $\D_p^{\pm}=\bigoplus_{k\in\ZZ} (\D_p^{\pm})_k$ where
\[ (\D_p^{\pm})_k=\{t^kf(D)p(D):f\in\CC[x] \ \text{and} \ \sigma_{\pm}(t^kf(D)p(D))=-t^kf(D)p(D)\}. \]

Let us denote by $\CC[x]^{(0)}$ (resp. $\CC[x]^{(1)}$) the set of all even (resp. odd) polynomials in $\CC[x]$. Also, we let $\overline{k}=0$ if $k$ is an odd integer and $\overline{k}=1$ if $k$ even. The following lemma gives a complete description of $(\D_p^{\pm})_k$.

\newpage

\begin{lema} \label{determino Dp+-}
 \begin{enumerate}[(a)]
  \item $(\D_p^+)_k=\left\{t^kf\left(D-\dfrac{c-k}{2}\right)p(D):f\in\CC[x]^{(\overline{n})}\right\}$ \label{determino Dp+};
  \item $(\D_p^-)_k=\left\{t^kf\left(D-\dfrac{c-k}{2}\right)p(D):f\in\CC[x]^{(\overline{n+k})}\right\}$ \label{determino Dp-}.
 \end{enumerate}
\end{lema}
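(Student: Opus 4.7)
The plan is to reduce the membership condition $\sigma_{\pm}(t^kf(D)p(D))=-t^kf(D)p(D)$, via the explicit formula \eqref{formulasigma}, to a parity condition on the polynomial $f$ after a simple affine change of variable. Concretely, using $\varepsilon=(-1)^n$ with $n=\deg(p)$, the defining condition for $(\D_p^+)_k$ becomes
\begin{equation*}
(-1)^n f(-D-k+c)=-f(D),
\end{equation*}
and the one for $(\D_p^-)_k$ becomes
\begin{equation*}
(-1)^{n+k} f(-D-k+c)=-f(D).
\end{equation*}

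Next I would introduce the shifted variable $y=D-\frac{c-k}{2}$ and set $\widetilde{f}(y)=f\bigl(y+\frac{c-k}{2}\bigr)$. The key observation is that the affine map $D\mapsto -D-k+c$ becomes exactly $y\mapsto -y$ after this shift, because
\begin{equation*}
-D-k+c=-\Bigl(y+\tfrac{c-k}{2}\Bigr)-k+c=-y+\tfrac{c-k}{2},
\end{equation*}
so $f(-D-k+c)=\widetilde{f}(-y)$. The two membership conditions above then translate respectively into
\begin{equation*}
\widetilde{f}(-y)=(-1)^{n+1}\widetilde{f}(y) \quad\text{and}\quad \widetilde{f}(-y)=(-1)^{n+k+1}\widetilde{f}(y).
\end{equation*}

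Finally I would match this against the convention $\overline{k}=1$ when $k$ is even and $\overline{k}=0$ when $k$ is odd. For $(\D_p^+)_k$, the condition says $\widetilde{f}$ is odd precisely when $n$ is even and even precisely when $n$ is odd, i.e. $\widetilde{f}\in\CC[x]^{(\overline{n})}$, which gives part (a). For $(\D_p^-)_k$, the same analysis with $n$ replaced by $n+k$ yields $\widetilde{f}\in\CC[x]^{(\overline{n+k})}$, proving part (b). The only non-routine point is recognizing that the shift by $(c-k)/2$ is the correct change of variable to turn the involution $D\mapsto -D-k+c$ into the standard sign flip $y\mapsto -y$; once that is done, everything reduces to book-keeping of signs and the parity convention.
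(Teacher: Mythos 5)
Your proposal is correct and follows essentially the same route as the paper's proof: apply the explicit formula \eqref{formulasigma} for $\sigma_{\pm}$, reduce the condition $\sigma_{\pm}(X)=-X$ to the functional equation $f(-x-k+c)=(-1)^{n(+k)+1}f(x)$, and then observe that the shift $y=D-\frac{c-k}{2}$ turns the involution into $y\mapsto -y$, so that the equation becomes the stated parity condition. The only difference is cosmetic: the paper writes out the $\sigma_-$ case and declares the $\sigma_+$ case similar, while you treat both in parallel.
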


\begin{proof}
 Let $t^kf(D)p(D)\in(\D_p^-)_k$ then, by Theorem \ref{clasificacion antinvolution}
 \[ (-1)^{n+k}t^kf(-D-k+c)p(D)=\sigma_{-}(t^kf(D)p(D))=-t^kf(D)p(D) \]
 if and only if $(-1)^{n+k+1}f(x)=f(-x-k+c)$. We define $g(w)=f(w+\dfrac{c-k}{2})$ and for $x=w-\dfrac{c-k}{2}$, we have $g(-x)=f(-w-k+c)=(-1)^{n+k+1}f(w)=(-1)^{n+k+1}f(x+\dfrac{c-k}{2})=(-1)^{n+k+1}g(x)$, therefore $g(w)\in\CC[w]^{(\overline{n+k})}$ and

 \noindent $g(x-\dfrac{c-k}{2})=f(x)$ finishing \eqref{determino Dp-}. The proof of \eqref{determino Dp+} is similar.
\end{proof}

\begin{obse} \cite{kwy} \end{obse}

We have the following 2-cocycle on $\D$, where $f(x), \ g(x)\in\CC[x]$
\[ \Psi(t^rf(D),t^sg(D))=\begin{cases}
\sum\limits_{-r\leqslant m\leqslant-1}f(m)g(m+r) & \text{if $r=-s>0$}, \\
0 & \text{if $r+s\neq0$ or $r=s=0$}.
\end{cases} \]
Denote by $\widehat{\D}$ the central extension of $\D$ by a one-dimensional center $\CC C$, corresponding to the 2-cocycle $\Psi$, i.e. $\widehat{\D}=\D+\CC C$ with the following commutation relation
\[ [t^rf(D),t^sg(D)]=t^{r+s}(f(D+s)g(D)-f(D)g(D+r))+\Psi(t^rf(D),t^sg(D))C. \]
Denote by $\widehat{\D}_p^{\pm}$ the central extension of $\D_p^{\pm}$ by $\CC C$ corresponding to the restriction of the 2-cocycle $\Psi$.

Letting $wt\,t^kf(D)p(D)=k, \ wt\,C=0$ defines the \textit{principal gradation} of $\widehat{\D}_p^{\pm}$
\begin{equation}
\widehat{\D}_p^{\pm}=\bigoplus_{k\in\ZZ} (\widehat{\D}_p^{\pm})_k, \quad \text{where} \quad (\widehat{\D}_p^{\pm})_k = (\D_p^{\pm})_k+\delta_{0,k}\CC C. \label{(26)}
\end{equation}

In order to apply the general results on quasifinite representations of $\ZZ$-graded Lie algebras developed in sect $2$ in \cite{kl}, we need to study the parabolic subalgebras of $\widehat{\D}_p^{\pm}$. Let us recall some general definition and results from \cite{kl}.

\medskip
Let $\g=\bigoplus_{j\in\ZZ}\g_j$, be a $\ZZ$-graded Lie algebra over $\CC$, and take $\g_{+}=\bigoplus_{j>0}\g_j$. A $\ZZ$-graded subalgebra $\p$ of $\g$ is called \textit{parabolic} if,
\[ \p =\bigoplus\limits_{j\in\ZZ} \p_j, \ \text{where} \ \p_j = \g_j \ \text{for} \ j \geq 0 \ \text{and} \ \p_j\neq0 \ \text{for some} \ j<0. \]

\newpage
\noindent We assume the following properties of $\g$:

\medskip
(P1) $\g_0$ is commutative,

\medskip
(P2) if $a\in\g_{-k} \ (k>0)$ and $[a,\g_1]=0$, then $a=0$.

\medskip
\noindent Given $a\in\g_{-1}$ that is nonzero, we define $\p^a=\bigoplus_{j\in\ZZ}\p_j^a$, where $\p_j^a=\g_j \ \text{for all} \ j\geq0$ and
\[ \p_{-1}^a=\sum[...[[a,\g_0],\g_0],...], \ \ \p_{-k-1}^a=[\p_{-1}^a,\p_{-k}^a]. \]

\noindent It was proved in \cite{kl} that $\p^a$ is the minimal parabolic subalgebra containing $a$.

\begin{defi}
\textit{(a)} A parabolic subalgebra $\p$ is called nondegenerate if $\p_{-j}$ has finite codimension in $\g_{-j}$ for all $j\in\NN$.

\textit{(b)} An element $a\in\g_{-1}$ is called nondegenerate if $\p^a$ is nonodegenerate.
\end{defi}
\noindent We will also require the following condition on $\g$:

\medskip
(P3) If $\p$ is a nondegenerate parabolic subalgebra of $\g$, then there exists a nondegenerate element $a\in\p_{-1}$.

\medskip
A $\g$-modulo $V$ is called $\ZZ$-\textit{graded} if $V=\bigoplus_{j\in\ZZ}V_j$ and $\g_iV_j\subset V_{i+j}$. A $\ZZ$-graded $\g$-module is called \textit{quasifinite} if $dimV_j<\infty$ for all j.

Given $\lambda\in\g_0^*$, a \textit{highest-weight module} is a $\ZZ$-graded $\g$-module $V(\g,\lambda)$ generated by a highest-weight vector $v_{\lambda}\in V(\g,\lambda)_0$ which satisfies
\[hv_{\lambda}=\lambda(h)v_{\lambda} \quad \text{for} \ h\in\g \ \text{and} \ \g_+v_{\lambda}=0.\]
A nonzero vector $v\in V(\g,\lambda)$ is called \textit{singular} if $\g_+v=0.$

The \textit{Verma module} over $\g$ is defined as usual:
\[ M(\g,\lambda)=\U(\g){\mbox{\small$\bigotimes$}}_{\U(\g_0\bigoplus\g_+)}\CC_{\lambda}, \]
where $\CC_{\lambda}$ is the one-dimensional ($\g_0\bigoplus\g_+$)-module given by $h\mapsto\lambda(h)$ if $h\in\g_0, \ \g_+\mapsto0$, and the action of $\g$ is induced by the left multiplication in $\U(\g)$. Here and further $\U(\g)$ stands for the universal enveloping algebra of the Lie algebra $\g$. Any highest weight module $V(\g,\lambda)$ is a quotient module of $M(\g,\lambda)$. The irreducible module $L(\g,\lambda)$ is the quotient of $M(\g,\lambda)$ by the maximal proper graded submodule.

Consider a parabolic subalgebra $\p =\bigoplus_{j\in\ZZ} \p_j$ of $\g$ and let $\lambda\in\g_0^*$ be such that $\lambda\mid_{\g_0\bigcap[\p,\p]}=0$. Then the ($\g_0\bigoplus\g_+$)-module $\CC_{\lambda}$ extends to a $\p$-module by letting $\p_j$ act as $0$ for $j<0$, and we may construct the highest weight module
\[ M(\p,\g,\lambda)=\U(\g){\mbox{\small$\bigotimes$}}_{\U(\p)}\CC_{\lambda} \]
called the \textit{generalized Verma module}. Clearly all these highest weight modules are graded. The following result givens the characterization of all irreducible quasifinite highest weight modules.

\newpage
\begin{teor} \label{lambda equivalencias}
\cite{kl} Let $\g=\bigoplus_{j\in\ZZ}\g_j$ be a $\ZZ$-graded Lie algebra over $\CC$ that satisfies conditions (P1), (P2) and (P3). The following conditions on $\lambda\in\g_0^*$ are equivalent:
 \begin{enumerate}[(a)]
  \item $M(\g,\lambda)$ contains a singular vector $a.v_{\lambda}$ in $M(\g,\lambda)_{-1}$, where a is nondegenerate.
  \item There exist a nonodegenerate element $a\in\g_{-1}$, such that $\lambda([\g_1,a])=0$.
  \item $L(\g,\lambda)$ is quasi-finite.
  \item There exist a nondegenerate element $a\in\g_{-1}$, such that $L(\g,\lambda)$ is the irreducible quotient of the generalized Verma module $M(\g,\p^a,\lambda)$.
 \end{enumerate}
\end{teor}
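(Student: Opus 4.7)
The plan is to establish the implications (a)$\Leftrightarrow$(b), (a)$\Rightarrow$(d), (d)$\Rightarrow$(c), and (c)$\Rightarrow$(b), which together close up into all the equivalences. The first step is routine: by PBW the map $\g_{-1}\to M(\g,\lambda)_{-1}$, $a\mapsto a.v_\lambda$, is a bijection, so singularity of $a.v_\lambda$ reduces to $\g_1.(a.v_\lambda)=0$, because for $j\geq 2$ any $x\in\g_j$ satisfies $x.(a.v_\lambda)=[x,a].v_\lambda$ with $[x,a]\in\g_{j-1}\subset\g_+$ annihilating $v_\lambda$ automatically. For $x\in\g_1$ we have $x.(a.v_\lambda)=\lambda([x,a])v_\lambda$, so singularity is exactly $\lambda([\g_1,a])=0$.

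For (a)$\Rightarrow$(d), given a singular $a.v_\lambda$ with $a$ nondegenerate, the submodule $N=\U(\g)(a.v_\lambda)$ is a proper graded submodule of $M(\g,\lambda)$. Working modulo $N$, the commutativity of $\g_0$ (P1) and the identity $[h,a].v_\lambda=h.(a.v_\lambda)-\lambda(h)a.v_\lambda$ inductively force $\p^a_{-1}.v_\lambda=0$, and iterating the bracket definition yields $\p^a_{<0}.v_\lambda=0$ in the quotient. Hence the quotient factors through $M(\g,\p^a,\lambda)$, and $L(\g,\lambda)$ is its irreducible quotient. For (d)$\Rightarrow$(c), the PBW isomorphism $M(\g,\p^a,\lambda)\cong\U(\g_-/\p^a_-)\otimes\CC_\lambda$ of graded vector spaces together with the nondegeneracy of $\p^a$ makes each graded piece of $M(\g,\p^a,\lambda)$, and hence of its quotient $L(\g,\lambda)$, finite dimensional.

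The main obstacle is (c)$\Rightarrow$(b), where property (P3) enters essentially. For each $k\geq 1$ set $I_{-k}:=\{a\in\g_{-k}:a.v_\lambda=0 \text{ in } L(\g,\lambda)\}$; since $\g_{-k}/I_{-k}$ injects into the finite-dimensional $L_{-k}$, each $I_{-k}$ has finite codimension in $\g_{-k}$. Short computations using $\g_+.v_\lambda=0$ and $h.v_\lambda=\lambda(h)v_\lambda$ for $h\in\g_0$ give $[I_{-j},I_{-k}]\subset I_{-j-k}$, $[\g_0,I_{-k}]\subset I_{-k}$, and $[\g_j,I_{-k}]\subset I_{-k+j}$ for $1\leq j<k$ (with the bracket landing in $\g_{\geq 0}$ when $j\geq k$). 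Thus $\p:=\g_{\geq 0}\oplus\bigoplus_{k\geq 1}I_{-k}$ is a nondegenerate $\ZZ$-graded parabolic subalgebra of $\g$, and (P3) supplies a nondegenerate element $a\in\p_{-1}=I_{-1}$. It remains to identify $I_{-1}$ with $\{a\in\g_{-1}:\lambda([\g_1,a])=0\}$: the maximal proper graded submodule $M'\subset M(\g,\lambda)$ satisfies $M'_{\geq 0}=0$, so every $w\in M'_{-1}$ obeys $\g_j w\in M'_{j-1}=0$ for $j\geq 1$, forcing $w$ to be singular; invoking (a)$\Leftrightarrow$(b) pins down $M'_{-1}$ as the image of $\{a:\lambda([\g_1,a])=0\}$ under $a\mapsto a.v_\lambda$, and hence $I_{-1}$ has the required form. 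The subtle point is that one is forced to define $I_{-k}$ through annihilation in the \emph{irreducible} quotient $L(\g,\lambda)$ rather than in $M(\g,\lambda)$, since only then do the $I_{-k}$ close up into a subalgebra, and this is exactly what allows (P3) to be invoked.
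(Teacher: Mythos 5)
The paper itself offers no argument here --- the result is imported verbatim from \cite{kl} and the ``proof'' is the citation --- so there is nothing internal to compare against; your reconstruction follows the standard Kac--Radul/Kac--Liberati route, and the overall chain (a)$\Leftrightarrow$(b), (a)$\Rightarrow$(d)$\Rightarrow$(c)$\Rightarrow$(b) together with the computations you give (the PBW identification of $M(\g,\lambda)_{-1}$ with $\g_{-1}$, the bracket relations showing the annihilators $I_{-k}$ close up into a graded subalgebra $\p=\g_{\geq 0}\oplus\bigoplus_k I_{-k}$, and the use of (P3) to extract a nondegenerate element of $I_{-1}$) is correct and is exactly where (P1)--(P3) are meant to enter.

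Two points should be made explicit. First, in (a)$\Rightarrow$(d) the generalized Verma module $M(\p^a,\g,\lambda)$ is only defined when $\lambda|_{\g_0\cap[\p^a,\p^a]}=0$; this does follow from your computation (for $h=[x,y]$ with $x\in\p^a_{-j}$, $y\in\g_j$ one gets $\lambda(h)v_\lambda=[x,y].v_\lambda=0$ in $M/N$ while $v_\lambda\neq 0$ there), but the verification should appear, since otherwise ``the quotient factors through $M(\g,\p^a,\lambda)$'' is not yet meaningful. Second, in (c)$\Rightarrow$(b) you need $\p$ to actually be \emph{parabolic}, i.e.\ $I_{-k}\neq 0$ for some $k$, and finite codimension alone does not give this: it requires $\dim\g_{-1}=\infty$ (or some substitute). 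This is harmless in every application in the paper, where $(\widehat{\D}_p^{\pm})_{-1}$ is infinite dimensional, but it is not a consequence of (P1)--(P3) as stated --- $\mathfrak{sl}_2$ with its principal gradation satisfies (P1)--(P3), every $L(\lambda)$ is quasifinite, yet for $\lambda(h)\neq 0$ no nonzero $a\in\g_{-1}$ satisfies $\lambda([\g_1,a])=0$. So either the infinite-dimensionality of the negative graded pieces must be added as a standing hypothesis or you should flag precisely where it is used.
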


\begin{proof} See \cite{kl}. \end{proof}

Now we will prove that $\widehat{\D}_p^{\pm}$ satisfies the properties (P1), (P2), (P3) and therefore we can apply Theorem \ref{lambda equivalencias}. It is obvious that $\widehat{\D}_p^{\pm}$ satisfies (P1). In order to prove that $\widehat{\D}_p^{\pm}$ satisfies (P2) and (P3), we shall need the following results.
\begin{lema} \label{lema1}
Lets $f, \ g, \ h \in\CC[x]$ be such that $deg(fg)>0$ and
\begin{equation}
[t^kf(D),t^lg(D)]=t^{k+l}h(D)+\Psi(t^kf(D),t^lg(D))C, \label{(45)}
\end{equation}
then $deg(h)=deg(f)+deg(g)-1$ if and only if $deg(f)l\neq deg(g)k$.
\end{lema}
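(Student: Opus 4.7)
The plan is to use the explicit bracket formula stated at the start of Section~2,
\[ [t^k f(D), t^l g(D)] = t^{k+l}\bigl(f(D+l)g(D) - f(D)g(D+k)\bigr) + \Psi(t^k f(D), t^l g(D)) C. \]
Comparing with \eqref{(45)} identifies $h(D) = f(D+l)g(D) - f(D)g(D+k)$, so the whole statement reduces to computing the two leading coefficients of this polynomial in $D$. The cocycle term in \eqref{(45)} plays no role, since it contributes only to the center.

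First I would set $m = \deg f$ and $n = \deg g$, and write $f(x) = a_m x^m + a_{m-1}x^{m-1}+\cdots$ and $g(x) = b_n x^n + b_{n-1}x^{n-1}+\cdots$ with $a_m, b_n \neq 0$ (the subcase where one of $f,g$ vanishes is trivial, and the condition $\deg(fg) > 0$ rules out the remaining degenerate case $m = n = 0$). The easy observation is that both $f(D+l)g(D)$ and $f(D)g(D+k)$ have the same top term $a_m b_n D^{m+n}$, so these cancel in $h$ and $\deg h \leqslant m+n-1$. Thus the only question is when the coefficient of $D^{m+n-1}$ in $h$ is nonzero.

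The main step is that coefficient computation. Using $(D+l)^m = D^m + ml\,D^{m-1} + \cdots$ (and likewise for $(D+k)^n$), the $D^{m+n-1}$-coefficient of $f(D+l)g(D)$ is $(ml\,a_m + a_{m-1})\,b_n + a_m\,b_{n-1}$, and symmetrically the $D^{m+n-1}$-coefficient of $f(D)g(D+k)$ is $a_m\,(nk\,b_n + b_{n-1}) + a_{m-1}\,b_n$. The cross terms $a_{m-1}b_n$ and $a_m b_{n-1}$ cancel in the difference, leaving
\[ [h]_{m+n-1} \;=\; a_m\,b_n\,(ml - nk). \]
Since $a_m b_n \neq 0$, this vanishes if and only if $\deg(f)\,l = \deg(g)\,k$, which is exactly the desired equivalence.

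I do not expect a substantive obstacle here; the argument is purely a leading-order calculation, and the only care needed is notational, adopting the convention $a_{-1} = 0$ (resp.\ $b_{-1} = 0$) when $m = 0$ (resp.\ $n = 0$) so that the single formula above covers the edge cases uniformly.
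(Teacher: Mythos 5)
Your proposal is correct and follows essentially the same route as the paper: both arguments reduce to checking that the top-degree terms of $f(D+l)g(D)$ and $f(D)g(D+k)$ cancel and that the coefficient of $D^{\deg f+\deg g-1}$ in the difference equals $a_m b_n(ml-nk)$ (the paper does this for monomials $f=D^i$, $g=D^j$, getting $il-jk$, and then extends by bilinearity, while you expand the general polynomials directly). The only difference is organizational, and your handling of the degenerate cases via $a_{-1}=b_{-1}=0$ is fine.
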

\begin{proof}
We suppose, $f(D)=D^i$ and $g(D)=D^j$ with $i+j\neq0$, then from \eqref{(45)}
\begin{equation*} h(D)=(D+l)^iD^j-(D+k)^jD^i, \end{equation*}
it is clear that $deg(h)\leqslant i+j-1$, moreover the coefficient of $D^{i+j-1}$ is $(il-jk)$ therefore, for this case the Lemma is true. Now, lets $f(x)=\sum_{i=0}^nf_ix^i$ and $g(x)=\sum_{j=0}^ng_jx^j$ be polynomials such that $n+m\neq0$, then
\begin{equation*}
[t^kf(D),t^lg(D)]=\sum_{i,j}f_ig_j[t^kD^i,t^lD^j]:=\sum_{i,j}f_ig_j\,t^{k+l}h_{i,j}(D),
\end{equation*}
with $deg(h_{i,j})\leqslant i+j-1$. Therefore we only have to study $t^{k+l}h_{n,m}(D):=[t^kD^n,t^lD^m]$, finally the proof follows from previous paragraph.
\end{proof}
\begin{lema} \label{codiemnsion de P} Let $\p=\bigoplus_{j\in\ZZ}\p_j$ be a $\ZZ$-graded subalgebra of $\widehat{\D}_p^{\pm}$ with $\p_0=(\widehat{\D}_p^{\pm})_0$.
 \begin{enumerate}[(a)]
   \item If $\p_j\neq0$, then it has finite codimension in $(\widehat{\D}_p^{\pm})_j$. \label{codimension de P, a}
   \item If $\p_{-1}\neq0$, then $\p_{-j}$ has finite codimension in $(\widehat{\D}_p^{\pm})_{-j}$ for all $j\in\NN$.\label{codimension de P, b}
 \end{enumerate}
\end{lema}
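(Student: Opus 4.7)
The plan is to exploit the adjoint action of $\p_0 = (\widehat{\D}_p^{\pm})_0$ on $\p_j$ together with the bracket degree formula (Lemma \ref{lema1}) and the description of the graded pieces from Lemma \ref{determino Dp+-}. The key observation is that, by Lemma \ref{determino Dp+-}, $(\widehat{\D}_p^{\pm})_j$ has a one-dimensional slice in each admissible degree of its polynomial part, so it suffices to produce elements of $\p_j$ in all but finitely many such degrees.

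For part (a), the case $j=0$ is trivial. For $j\neq 0$, fix a nonzero $x = t^j f_0(D) p(D) \in \p_j$ and bracket it with elements $y = g(D) p(D) \in \p_0$, where $g$ ranges over the parity class specified by Lemma \ref{determino Dp+-}. Using the commutator of $\widehat{\D}$ I obtain
\[
[y,x] = t^j f_0(D) p(D) \bigl( g(D+j) p(D+j) - g(D) p(D) \bigr),
\]
with no central term since $j\neq 0$. Lemma \ref{lema1} (with $k=0$, $l=j$, and $gp$ nonconstant, so that $\deg(gp)\cdot j \neq 0$) shows that the polynomial part of $[y,x]$ has degree $\deg f_0 + \deg g + n - 1$. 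As $\deg g$ sweeps an arithmetic progression of common difference $2$, the new polynomial degrees do likewise starting at $\deg f_0 + \bar n + n - 1$; closure of $\widehat{\D}_p^{\pm}$ under bracket ensures these automatically lie in the correct parity class indexing $(\widehat{\D}_p^{\pm})_j$. Hence $\p_j$ contains the one-dimensional slice in all but finitely many degrees, giving finite codimension.

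For part (b), I induct on $j\geq 1$ to show $\p_{-j}\neq 0$; part (a) then yields finite codimension. The base $j=1$ is the hypothesis. For the inductive step with $j\geq 2$, write a nonzero $a = t^{-j+1} A(D) \in \p_{-j+1}$ and seek $b = t^{-1} B(D) \in \p_{-1}$ with $[a,b]\neq 0$. By part (a) applied to $\p_{-1}$, that space is cofinite in the infinite-dimensional $(\widehat{\D}_p^{\pm})_{-1}$, so it contains $B$ of arbitrarily large degree; in particular I can choose $B$ with $\deg A \neq (j-1)\deg B$, the unique possible equality Lemma \ref{lema1} excludes. Then Lemma \ref{lema1} yields $[a,b] = t^{-j} h(D)$ with $\deg h = \deg A + \deg B - 1 > 0$, and no central contribution arises since $-j \neq 0$. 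Thus $\p_{-j}$ contains the nonzero element $[a,b]$, and part (a) closes the induction.

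The main obstacle is the parity-and-degree bookkeeping in part (a): one must verify that the increments produced by bracketing actually reach all but finitely many of the degrees indexing $(\widehat{\D}_p^{\pm})_j$. Closure of $\widehat{\D}_p^{\pm}$ under the bracket forces the correct parity automatically, so the verification reduces to the elementary observation that a single step-$2$ arithmetic progression of reachable degrees already covers all but finitely many of the one-dimensional graded pieces of $(\widehat{\D}_p^{\pm})_j$.
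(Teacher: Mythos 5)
Your proof is correct and follows the paper's own argument essentially verbatim: part (a) is obtained by bracketing a fixed nonzero element of $\p_j$ with the elements $(D-\frac{c}{2})^{2k+\overline{n}}p(D)\in\p_0$ and invoking Lemma \ref{lema1} to read off the resulting arithmetic progression of degrees, and part (b) proceeds by induction, using part (a) to supply a bracket partner whose degree avoids the single equality that Lemma \ref{lema1} excludes. The only cosmetic difference is that in the inductive step you vary the degree of the element taken from $\p_{-1}$, whereas the paper keeps that element fixed and varies the degree of the element of $\p_{-j}$.
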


\begin{proof}
In order to prove \eqref{codimension de P, a}, it is enough to find a family $\{t^jg_k(D)p(D)\}_{k\geqslant1}\subset\p_j$ with $deg(g_k)=m_0+2k$ for some fixed $m_0\in\ZZ_+$ (see Lemma \ref{determino Dp+-}).

We suppose $j\neq0$. Let $t^jf(D)p(D)\in\p_j$ be nonzero. By hypothesis and Lemma \ref{determino Dp+-}, $(D-\dfrac{c}{2})^{2k+\overline{n}}p(D)\in\p_0$ for all $k\geqslant1$, then
\begin{equation*} [t^jf(D)p(D),(D-\dfrac{c}{2})^{2k+\overline{n}}p(D)]:=t^jg_k(D)p(D)\in\p_j, \end{equation*}
and by Lemma \ref{lema1} we obtain $deg(g_k)=deg(f)+\overline{n}+n-1+2k$, finishing (a).

Now, in order to prove \eqref{codimension de P, b} we only need to see that $\p_{-j}\neq0$ for all $j\geqslant1$. By induction, we suppose $\p_{-j}\neq0$ with $j\geqslant1$. Then from the above paragraph, for all $k\geqslant1$ there exists $t^{-j}g_k(D)p(D)\in\p_{-j}$ with $deg(g_k)=m_0+2k$ ($m_0\in\ZZ_+$ fixed) and by hypothesis there exists $t^{-1}f(D)p(D)\in\p_{-1}$ that is nonzero. Hence, we can take $k_0\in\NN$ such that $(n+deg(f))j\neq(n+m_0+2k_0)$, then by Lemma \ref{lema1}, we have that $[t^{-1}f(D)p(D),t^{-j}g_{k_0}(D)p(D)]\in\p_{-j-1}$ is nonzero.
\end{proof}

\begin{corol}
\begin{enumerate}[(a)]
\item $\widehat{\D}_p^{\pm}$ satisfies (P2).
\item Any parabolic subalgebra of $\widehat{\D}_p^{\pm}$ is nondegenerate. \label{(46)}
\item Any nonzero element of $(\widehat{\D}_p^{\pm})_{-1}$ is nondegenerate. \label{(47)}
\item $\widehat{\D}_p^{\pm}$ satisfies (P3). \label{(48)}
\end{enumerate}
\end{corol}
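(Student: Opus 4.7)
The plan is to prove the four items sequentially, with (a) and the non-triviality of $\p_{-1}$ for parabolic $\p$ in (b) as the technical steps. For \textbf{(a)} (property (P2)), given $0\neq a=t^{-k}\tilde f(D)p(D)\in(\widehat{\D}_p^{\pm})_{-k}$ with $k\geqslant 1$, I would produce $b\in(\widehat{\D}_p^{\pm})_{1}$ with $[a,b]\neq 0$. By Lemma \ref{determino Dp+-}, take $b=t\,\tilde g(D)p(D)$ with $\tilde g$ of the required shifted parity and sufficiently large degree, in particular with $\deg(\tilde g)+n>0$. Applying Lemma \ref{lema1} to $f=\tilde fp$, $g=\tilde gp$ with exponents $-k$ and $1$, the condition $\deg(f)\cdot 1\neq\deg(g)\cdot(-k)$ is strict because its right side is negative while its left side is nonnegative. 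Hence the polynomial part of $[a,b]$ has degree $\deg(\tilde f)+\deg(\tilde g)+2n-1\geqslant 0$ and is nonzero. A cocycle contribution appears only when $k=1$, but it lives in $\CC C$, disjoint from the polynomial part in $(\D_p^{\pm})_0$, so $[a,b]\neq 0$ in every case.

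For \textbf{(b)}, let $\p\subseteq\widehat{\D}_p^{\pm}$ be a parabolic subalgebra. By definition some $\p_{-j_0}\neq 0$ with $j_0\geqslant 1$. Once I show $\p_{-1}\neq 0$, Lemma \ref{codiemnsion de P}\eqref{codimension de P, b} immediately gives $\p_{-j}$ of finite codimension in $(\widehat{\D}_p^{\pm})_{-j}$ for every $j\in\NN$, so $\p$ is nondegenerate. To reach $\p_{-1}$: if $j_0=1$ there is nothing to do; otherwise, for $0\neq a\in\p_{-j_0}$, bracket against a suitable $b\in\g_{j_0-1}=\p_{j_0-1}$ chosen of the correct parity and large degree, and re-run the analysis of (a) with Lemma \ref{lema1} applied with exponents $-j_0$ and $j_0-1$ to obtain $0\neq [a,b]\in\p_{-1}$.

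Parts \textbf{(c)} and \textbf{(d)} are then formal consequences. For (c), any nonzero $a\in(\widehat{\D}_p^{\pm})_{-1}$ generates the minimal parabolic $\p^a$ (by \cite{kl}), which is nondegenerate by (b); hence $a$ is nondegenerate by definition. For (d), every parabolic $\p$ of $\widehat{\D}_p^{\pm}$ satisfies $\p_{-1}\neq 0$ by the argument of (b), and every nonzero element of $\p_{-1}$ is nondegenerate by (c), which is exactly property (P3).

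The main obstacle is the bookkeeping in (a) and (b): for each sign $\pm$, Lemma \ref{determino Dp+-} forces the bracketing partner $b$ to lie in a prescribed parity class ($\overline{n}$ or $\overline{n+j}$), and Lemma \ref{lema1} simultaneously demands its degree be large enough to make the ``degree product'' inequality strict. Because each $(\widehat{\D}_p^{\pm})_{j}$ contains elements of arbitrarily large degree in the prescribed parity, both constraints can always be met, so no genuine difficulty arises beyond setting up the right choice of $b$.
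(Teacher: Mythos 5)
Your proposal is correct and follows essentially the same route as the paper: (P2) via Lemma \ref{lema1} applied to a bracket with a suitable element of degree $1$, part (b) by reducing to $\p_{-1}\neq 0$ and invoking Lemma \ref{codiemnsion de P}\eqref{codimension de P, b}, and (c), (d) as formal consequences. The only (harmless) deviation is that you pass from $\p_{-j_0}$ to $\p_{-1}$ in a single bracket with $\g_{j_0-1}$, whereas the paper climbs one degree at a time by iterating (P2); your extra care with the degree inequality and the central term is a welcome tightening of the paper's brief argument.
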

\begin{proof}
Let $t^{-k}f(D)p(D)\in\widehat{\D}_p^{\pm}$ be nonzero (with $k>0$), then if we take $tg(D)p(D)\in\widehat{\D}_p^{\pm}$ with no constant $g$, we obtain from Lemma \ref{lema1} that
\begin{equation*} [t^{-k}f(D)p(D),tg(D)p(D)]\neq 0, \end{equation*}
therefore, $\widehat{\D}_p^{\pm}$ satisfies (P2).

Now, let $\p$ be a parabolic subalgebra of $\widehat{\D}_p^{\pm}$, by definition there exists $j\in\NN$ such that $\p_{-j}\neq0$ then by (P2), $\p_{-1}\neq0$, and the proof of \eqref{(46)} follows from Lemma \ref{codiemnsion de P} \eqref{codimension de P, b}. Finally, \eqref{(47)} follows from \eqref{(46)}, and \eqref{(48)} follows from \eqref{(47)}.
\end{proof}

Let $L(\widehat{\D}_p^{\pm},\lambda)$ be an irreducible quasifinite highest weight module over $\widehat{\D}_p^{\pm}$. By Theorem \ref{lambda equivalencias}, there exists some monic polynomial $b(x-\frac{c+1}{2})p(x)$ such that $(t^{-1}b(D-\frac{1+c}{2})p(D))v_{\lambda}=0$ (with $b(x)$ a polynomial odd or even depending on $\widehat{\D}_p^{\pm}$ as described in Lemma \ref{determino Dp+-}). We shall call such monic polynomial of minimal degree, uniquely determined by the highest-weight $\lambda$, the \textit{characteristic polynomial} of $L(\widehat{\D}_p^{\pm},\lambda)$.

Let us denote by $\ZZ_+^{(0)}$ (resp. $\ZZ_+^{(1)}$) the set all even (resp. odd) non-negative integers.

A functional $\lambda\in(\widehat{\D}_p^{\pm})_0^*$ is described by its labels $\triangle_l=-\lambda((D-\frac{c}{2})^lp(D))$, where $l\in\ZZ_+^{(\overline{n})}, \ n=deg(p)$ and the central charge $\lambda(C)=c_0$. We can consider the generating series
\begin{eqnarray}
\Delta_{\lambda}(x)=\sum_{l\in\ZZ_+^{(\overline{n})}}\dfrac{x^l}{l!}\triangle_l. \label{(31)}
\end{eqnarray}

Recall that a \textit{quasipolynomial} is a linear combination of functions of the form $q(x)e^{\alpha x}$, where $q\in\CC[x]$ and $\alpha\in\CC$. Also we have a well-known characterization: a formal power series is a  quasipolynomial (resp. even quasipolynomial) if and only if it satisfies a non-trivial linear differential equation with constant coefficients $f(\partial)=0$, where $f(x)$ is an  polynomial (resp. even polynomial).

The following theorem characterizes the irreducible quasifinite highest weight modules over $\widehat{\D}_p^{\pm}$.

\begin{teor} \label{caractrizacion de MWH}
A $\widehat{\D}_p^{\pm}$-module $L(\widehat{\D}_p^{\pm},\lambda)$ is quasifinite if and only if
\begin{equation}
\triangle_{\lambda}(x)=p\left(\frac{d}{dx}+\frac{c}{2}\right)\left(\frac{\phi_{\lambda}(x)}{2\sinh\left(\frac{x}{2}\right)}\right), \label{(30)}
\end{equation}
where $\phi_{\lambda}(x)$ is an even quasipolynomial such that $\phi_{\lambda}(0)=0$.
\end{teor}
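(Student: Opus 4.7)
By Theorem \ref{lambda equivalencias} together with the preceding corollary (every nonzero element of $(\widehat{\D}_p^{\pm})_{-1}$ is nondegenerate), $L(\widehat{\D}_p^{\pm},\lambda)$ is quasifinite if and only if there exists a nonzero
\[
a \;=\; t^{-1}\,b\bigl(D-\tfrac{c+1}{2}\bigr)\,p(D)\;\in\;(\widehat{\D}_p^{\pm})_{-1}
\]
satisfying $\lambda\bigl([(\widehat{\D}_p^{\pm})_1,a]\bigr)=0$, with $b$ of the parity prescribed by Lemma \ref{determino Dp+-}. The plan is to compute this bracket explicitly against a generic element $t\,g(D-\frac{c-1}{2})\,p(D)$ of $(\widehat{\D}_p^{\pm})_1$, and then repackage the resulting vanishing condition as a generating-function identity for $\Delta_\lambda(x)$.

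For the bracket computation, I would set $v = D-\tfrac{c+1}{2}$. Using the defining commutation relation of $\widehat{\D}$ together with the cocycle $\Psi$, the bracket collapses, after cancellation of a common factor in $p$, to
\[
P(v)\bigl(g(v)\,b(v)-g(v+1)\,b(v-1)\bigr) \;+\; \Psi\,C,
\]
where $P(v)=p\bigl(v+\tfrac{c+1}{2}\bigr)\,p\bigl(v-\tfrac{c-1}{2}\bigr)$. Applying $\lambda$ via the identity $\lambda\bigl(h(D-\tfrac{c}{2})p(D)\bigr)=-h(\partial_x)\Delta_\lambda(x)|_{x=0}$, valid for $h$ of parity $\overline{n}$, I would convert the vanishing for all admissible $g$ into the existence of a fixed polynomial $b$ (of the required parity) such that a certain difference-differential operator built from $b$ annihilates the appropriate transform of $\Delta_\lambda$.

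The manipulation standard in \cite{kr1,kl} then rewrites the telescoping expression $g(v)b(v)-g(v+1)b(v-1)$, after passing through $\lambda$, as a factor $(1-e^{x})$ times an operator in $b(\partial_x)$; combined with $P(v)$ and the half-unit shift between $v=D-\frac{c+1}{2}$ and $u=D-\frac{c}{2}$, this produces the denominator $e^{x}-1=e^{x/2}\cdot 2\sinh(x/2)$ together with the operator $p\bigl(\partial_x+\tfrac{c}{2}\bigr)$ appearing in the theorem. By the characterization recalled just before the theorem — a formal series is a (respectively, even) quasipolynomial iff it satisfies a nontrivial constant-coefficient (respectively, even) linear ODE — existence of such a nonzero $b$ is equivalent to $\phi_\lambda$, defined by the relation \eqref{(30)}, being an even quasipolynomial. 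The central term $\Psi\,C$ produces a scalar contribution that forces $\phi_\lambda(0)=0$.

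The main obstacle is the careful tracking of parities: the sign $\pm$ in $\sigma_\pm$, the parity $\overline{n}$ of $n=\deg p$, and the degree $k$ in $(\widehat{\D}_p^{\pm})_k$ jointly determine the allowed parities of $b$ and $g$ through Lemma \ref{determino Dp+-}. The cleanest route is to first derive the difference-differential identity uniformly in $\overline{n}$, and then verify in each of the two cases $\sigma_\pm$ that the resulting constraint on $\phi_\lambda$ corresponds to an \emph{even} polynomial annihilator — it is precisely this evenness, enforced by the $\sigma_\pm$-antisymmetry, that upgrades the classical Kac--Radul quasipolynomial criterion to the even-quasipolynomial statement of \eqref{(30)}.
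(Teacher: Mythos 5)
Your overall strategy is the paper's: invoke Theorem \ref{lambda equivalencias} plus the corollary on nondegeneracy, reduce quasifiniteness to the vanishing of $\lambda$ on $[(\widehat{\D}_p^{\pm})_1,\,t^{-1}b(D-\frac{c+1}{2})p(D)]$, and convert this into a generating-series statement. But two of the steps you sketch would not go through as described. First, the bracket does not collapse to a common factor $P(v)$ times $g(v)b(v)-g(v+1)b(v-1)$: computing $[t\,g(D-\frac{c-1}{2})p(D),\,t^{-1}b(D-\frac{c+1}{2})p(D)]$ gives two terms carrying \emph{different} products of shifted $p$'s, namely $g(v)b(v)\,p(D-1)p(D)$ and $g(v+1)b(v+1)\,p(D)p(D+1)$ (note also the shift on $b$ is $+1$, not $-1$), so only $p(D)$ is common. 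Folding these two terms, and the two exponential halves $e^{\pm x(D-\frac{c}{2})}$ of the generating series, into the single operator $b(\frac{d}{dx})p(\frac{d}{dx}+\frac{c+1}{2})p(\frac{d}{dx}+\frac{c-1}{2})$ acting on one function is exactly where the paper uses the symmetry $p(x)=(-1)^n p(-x+c)$ guaranteed by Theorem \ref{clasificacion antinvolution}; your proposal never invokes this symmetry, and without it the two halves carry mismatched polynomial prefactors and do not combine. This is the one genuinely non-standard ingredient relative to the Kac--Radul computation you cite, so it cannot be absorbed into ``the standard manipulation.''

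Second, the provenance of the evenness of $\phi_\lambda$ and of $\phi_\lambda(0)=0$ is misidentified. The paper obtains both essentially for free: by \eqref{(11)}--\eqref{(12)} one may take $\Gamma_\lambda(x)=-\frac12\lambda\bigl(e^{x(D-\frac{c}{2})}-e^{-x(D-\frac{c}{2})}\bigr)$, which is odd in $x$ by construction (this oddness is where the structure of $(\D_p^{\pm})_0$ enters), whence $\phi_\lambda=2\sinh(\frac{x}{2})\Gamma_\lambda$ is even and vanishes at $0$ because $\sinh(0)=0$. Your plan to extract evenness from the annihilator being an even polynomial would fail: the natural annihilator $b(\frac{d}{dx})p(\frac{d}{dx}+\frac{c+1}{2})p(\frac{d}{dx}+\frac{c-1}{2})$ is not even in general (indeed $b$ must be \emph{odd} when $\delta=1$ in \eqref{(27)}, and the shifted $p$'s have no parity). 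Likewise, the central term does not ``force $\phi_\lambda(0)=0$''; its role is to contribute the summand $\cosh\bigl(\frac{(c+1)x}{2}\bigr)c_0$ to the quasipolynomial $F(x)=2\sinh(\frac{x}{2})\Gamma_\lambda(x)+\cosh\bigl(\frac{(c+1)x}{2}\bigr)c_0$, which is what later ties the central charge to the multiplicities. For the converse direction you should also note that from an even annihilator $q$ of $F$ one can always manufacture one in $\CC[x]^{(\delta)}$ (multiply by $x$ if $\delta$ demands odd parity), so that a legitimate $b$ exists.
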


\begin{proof}
Recall that $p$ satisfies
\begin{equation} p(x)=(-1)^np(-x+c) \label{(p(x)=(-1)^np(-x+c))} \end{equation}
where $n=deg(p)$. We shall use the following identities, for $f, \ g\in\CC[x]$ and $a\in\CC$:
\begin{eqnarray}
f\left(\pm\frac{d}{dx}\right)e^{ax}&=&f(\pm a)e^{ax}, \label{(32)} \\
e^{\pm x(D-\frac{c}{2})}f(D)&=&f\left(\pm \frac{d}{dx}+\frac{c}{2}\right)e^{\pm x(D-\frac{c}{2})}, \label{(23)} \\
e^{\pm \frac{x}{2}}f\left(\frac{d}{dx}\right)g(x)&=&f\left(\frac{d}{dx}\mp\frac{1}{2}\right)e^{\pm\frac{x}{2}}g(x). \label{(25)}
\end{eqnarray}
Using \eqref{(p(x)=(-1)^np(-x+c))} and \eqref{(23)},
\begin{eqnarray}
\Delta_{\lambda}(x)&=&-\frac{1}{2}\,\lambda\left(\left(e^{x(D-\frac{c}{2})}+(-1)^{n+1}e^{-x(D-\frac{c}{2})} \right)p(D)\right) \notag \\
                      &=&-\frac{1}{2}\,\lambda\left(p\left(\frac{d}{dx}+\frac{c}{2}\right)\left(e^{x(D-\frac{c}{2})}-e^{-x(D-\frac{c}{2})} \right)\right). \label{(11)}
\end{eqnarray}
Now, take $\Gamma_{\lambda}(x)$ a solution of
\begin{equation} \triangle_{\lambda}(x)= p\left(\frac{d}{dx}+\frac{c}{2}\right)\Gamma_{\lambda}(x). \label{(12)} \end{equation}

It follows from Theorem \ref{lambda equivalencias} that $L(\widehat{\D}_p^{\pm},\lambda)$ is quasifinite if and only if there exists $t^{-1}b(D-\frac{c+1}{2})p(D)\in(\widehat{\D}_p^{\pm})_{-1}$ such that
\begin{equation} 0=\lambda([t(D-\frac{c-1}{2})^{2k+\delta}p(D),t^{-1}b(D-\frac{c+1}{2})p(D)])  \label{(24)} \end{equation}
for all $k\in\ZZ$ and by Lemma \ref{determino Dp+-} and \eqref{(26)}, $b\in\CC[x]^{(\delta)}$ where $\delta$ is given by
\begin{equation}
 \delta =\begin{cases}
             \overline{n}, \ \ \text{in the case} \ \widehat{\D}_p^+, \\
             \overline{n-1}, \ \ \text{in the case} \ \widehat{\D}_p^-.
            \label{(27)} \end{cases}
\end{equation}
Taking generatriz serie, \eqref{(24)} is equivalent to
\begin{eqnarray*}
0&=&\frac{1}{2}\lambda([t(e^{x(D-\frac{c-1}{2})}+(-1)^{\delta}e^{-x(D-\frac{c-1}{2})})p(D),t^{-1}b(D-\frac{c+1}{2})p(D)]) \\
&=&\frac{1}{2}\lambda\left(b(D-\frac{c+1}{2})p(D-1)p(D)(e^{x(D-\frac{c+1}{2})}+(-1)^{\delta}e^{-x(D-\frac{c+1}{2})}) \right. \\
& &-b(D-\frac{c-1}{2})p(D+1)p(D)(e^{x(D-\frac{c-1}{2})}+(-1)^{\delta}e^{-x(D-\frac{c-1}{2})}) \\
& &\left. +b(-\frac{c+1}{2})p(-1)p(0)(e^{-(\frac{c+1}{2})x}+(-1)^{\delta}e^{(\frac{c+1}{2})x})C \right)
\end{eqnarray*}
Then using the identities \eqref{(p(x)=(-1)^np(-x+c))}, \eqref{(32)}, \eqref{(23)}, \eqref{(25)}, \eqref{(11)} and \eqref{(12)}
\begin{eqnarray*}
0&=&\frac{1}{2}\lambda\left(b(\frac{d}{dx})p(\frac{d}{dx}+\frac{c-1}{2})e^{-\frac{x}{2}}p(\frac{d}{dx}+\frac{c}{2})e^{x(D-\frac{c}{2})} \right. \\
& &+(-1)^{\delta}b(-\frac{d}{dx})p(-\frac{d}{dx}+\frac{c-1}{2})e^{\frac{x}{2}}p(-\frac{d}{dx}+\frac{c}{2}+1)e^{-x(D-\frac{c}{2})} \\
& &-b(\frac{d}{dx})p(\frac{d}{dx}+\frac{c+1}{2})e^{\frac{x}{2}}p(\frac{d}{dx}+\frac{c}{2})e^{x(D-\frac{c}{2})} \\
& &\left. -(-1)^{\delta}b(-\frac{d}{dx})p(-\frac{d}{dx}+\frac{c+1}{2})e^{-\frac{x}{2}}p(-\frac{d}{dx}+\frac{c}{2}-1)e^{-x(D-\frac{c}{2})} \right) \\
& &+\frac{1}{2}\left(b(\frac{d}{dx})p(\frac{d}{dx}+\frac{c-1}{2})p(\frac{d}{dx}+\frac{c+1}{2})e^{-(\frac{c+1}{2})x}
\right. \\
& & \left.
+(-1)^{\delta}b(-\frac{d}{dx})p(-\frac{d}{dx}+\frac{c-1}{2})p(-\frac{d}{dx}+\frac{c+1}{2})e^{(\frac{c+1}{2})x}\right)c_0
\\
&=&\frac{1}{2}\lambda\left(b(\frac{d}{dx})\left[p(\frac{d}{dx}+\frac{c-1}{2})e^{-\frac{x}{2}}-p(\frac{d}{dx}+\frac{c+1}{2})e^{\frac{x}{2}}\right]p(\frac{d}{dx}+\frac{c}{2})e^{x(D-\frac{c}{2})}+
\right. \\
& &\left. b(\frac{d}{dx})\left[p(\frac{d}{dx}+\frac{c+1}{2})e^{\frac{x}{2}}-p(\frac{d}{dx}+\frac{c-1}{2})e^{-\frac{x}{2}}\right]p(\frac{d}{dx}+\frac{c}{2})e^{-x(D-\frac{c}{2})} \right)
\\
& &+\frac{1}{2}b(\frac{d}{dx})p(\frac{d}{dx}+\frac{c-1}{2})p(\frac{d}{dx}+\frac{c+1}{2})(e^{-(\frac{c+1}{2})x}+e^{(\frac{c+1}{2})x})c_0
\\
&=&b(\frac{d}{dx})\left[p(\frac{d}{dx}+\frac{c+1}{2})e^{\frac{x}{2}}-p(\frac{d}{dx}+\frac{c-1}{2})e^{-\frac{x}{2}}\right] p(\frac{d}{dx}+\frac{c}{2})\Gamma_{\lambda}(x)\\
& &+b(\frac{d}{dx})p(\frac{d}{dx}+\frac{c-1}{2})p(\frac{d}{dx}+\frac{c+1}{2})\cosh((\frac{c+1}{2})x)c_0
\\
&=&b(\frac{d}{dx})p(\frac{d}{dx}+\frac{c+1}{2})p(\frac{d}{dx}+\frac{c-1}{2})\left(2 \sinh(\frac{x}{2})\Gamma_{\lambda}(x)+\cosh((\frac{c+1}{2})x)c_0\right).
\end{eqnarray*}
It follows that $L(\widehat{\D}_p^{\pm},\lambda)$ is quasifinite if and only if there exists $b\in\CC[x]^{\delta}$ (see \eqref{(27)}) such that
\begin{equation}
0=b(\frac{d}{dx})p(\frac{d}{dx}+\frac{c+1}{2})p(\frac{d}{dx}+\frac{c-1}{2})\left(2 \sinh(\frac{x}{2})\Gamma_{\lambda}(x)+\cosh((\frac{c+1}{2})x)c_0\right). \label{(13)}
\end{equation}
Therefore, if $L(\widehat{\D}_p^{\pm},\lambda)$ is quasifinite, then $2 \sinh(\frac{x}{2})\Gamma_{\lambda}(x)+\cosh((\frac{c+1}{2})x)c_0$ is a quasipolynomial. But, using \eqref{(11)} and \eqref{(12)}, we get that $\Gamma_{\lambda}(x)$ is an odd function. Hence,
\begin{equation} \phi_{\lambda}(x)=2 \sinh(\frac{x}{2})\Gamma_{\lambda}(x) \label{(36)} \end{equation}
is an even quasipolynomial such that $\phi_{\lambda}(0)=0$, and using \eqref{(12)}, we have
\begin{equation}
\Delta_{\lambda}(x)=p\left(\frac{d}{dx}+\frac{c}{2}\right)\left(\frac{\phi_{\lambda}(x)}{2\sinh(\frac{x}{2})}\right). \label{(28)}
\end{equation}
Conversely, if \eqref{(28)} holds for some even quasipolynomial $\phi_{\lambda}$ with $\phi_{\lambda}(0)=0$, then $F(x)=\phi_{\lambda}(x)+\cosh((\frac{c+1}{2})x)c_0$ is an even quasipolynomial and it satisfies $q(\frac{d}{dx})F(x)=0$ for some $q\in\CC[x]^{\delta}$. In particular, we have
\[p(\frac{d}{dx}+\frac{c+1}{2})p(\frac{d}{dx}+\frac{c-1}{2})q(\frac{d}{dx})F(x)=0,\]
and therefore $L(\widehat{\D}_p^{\pm},\lambda)$ is quasifinite, finishing the proof.
\end{proof}

The even quasipolynomial $\phi_{\lambda}(x)+\cosh((\frac{c+1}{2})x)c_0$, where $\phi_{\lambda}(x)$ is from \eqref{(30)} and $c_0$ is the central charge, can be written in the form
\begin{equation}
\phi_{\lambda}(x)+\cosh((\frac{c+1}{2})x)c_0=\sum_i q_i(x)\cosh(e_i^+x) + \sum_j r_j(x)\sinh(e_j^-x), \label{(29)}
\end{equation}
where $q_i(x)$ (resp. $r_j(x)$) are non-zero even (resp. odd) polynomials and $e_i^+$ (resp. $e_j^-$) are distinct complex numbers. Note that $\sum_iq_i(0)=c_0$.

The expression \eqref{(29)} is unique up to a sign of $e_i^+$ or a simultaneous change of signs of $e_j^-$ and $r_j(x)$. We call $e_i^+$ (resp. $e_j^-$) the \textit{even type} (resp. \textit{odd type}) \textit{exponents} of $L(\widehat{\D}_p^{\pm},\lambda)$ with \textit{multiplicities} $q_i(x)$ (resp. $r_j(x)$). We denote by $e^+$ the set of even type exponents $e_i^+$ with multiplicity $q_i(x)$ and by $e^-$ the set of odd type exponents $e_j^-$ with multiplicity $r_j(x)$. Then the pair $(e^+,e^-)$ determines $L(\widehat{\D}_p^{\pm},\lambda)$ uniquely, and we shall also denote it as $L(\widehat{\D}_p^{\pm}; e^+,e^-)$.

\begin{corol}
Let $L(\widehat{\D}_p^{\pm},\lambda)$ be an irreducible quasifinite highest weight module over $\widehat{\D}_p^{\pm}$, $b(x-\frac{c+1}{2})p(x)$ be its characteristic polynomial with $b(x)\in\CC[x]^{\delta}$ (see \eqref{(27)}), $\Gamma_{\lambda}(x)$ be a solution of \eqref{(12)} and let $F(x)=2\sinh(\frac{x}{2})\Gamma_{\lambda}(x)+\cosh((\frac{c+1}{2})x)c_0$. Then
\begin{equation*}
b\left(\frac{d}{dx}\right)p\left(\frac{d}{dx}+\frac{c+1}{2}\right)
p\left(\frac{d}{dx}+\frac{c-1}{2}\right)F(x)=0
\end{equation*}
is the minimal order homogeneous linear differential equation with constant coefficients of the form
\begin{equation*} f\left(\frac{d}{dx}\right)p\left(\frac{d}{dx}+\frac{c+1}{2}\right)
p\left(\frac{d}{dx}+\frac{c-1}{2}\right) \ \text{with} \ f\in\CC[x]^{\delta}, \end{equation*}
satisfied by $F(x)$. Moreover, the exponents appearing in \eqref{(29)} are all roots of the polynomial $b(x)p(x+\frac{c+1}{2})p(x+\frac{c-1}{2})$.
\end{corol}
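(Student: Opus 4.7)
The plan is to deduce both claims directly from the chain of equivalences established in the proof of Theorem \ref{caractrizacion de MWH}, exploiting the fact that the computation there was genuinely an equivalence (``iff''), not just an implication.

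First I would recall that by definition the characteristic polynomial $b(x-\frac{c+1}{2})p(x)$ is the \emph{minimal degree} monic polynomial (with $b\in\CC[x]^{\delta}$) such that $t^{-1}b(D-\frac{c+1}{2})p(D)$ annihilates $v_\lambda$. The displayed computation in the proof of Theorem \ref{caractrizacion de MWH} shows that, for any $b\in\CC[x]^{\delta}$, the singular-vector condition
\[\lambda\bigl([t(D-\tfrac{c-1}{2})^{2k+\delta}p(D),\,t^{-1}b(D-\tfrac{c+1}{2})p(D)]\bigr)=0 \quad (\text{all } k\in\ZZ_+)\]
is \emph{equivalent} to $b(\tfrac{d}{dx})p(\tfrac{d}{dx}+\tfrac{c+1}{2})p(\tfrac{d}{dx}+\tfrac{c-1}{2})F(x)=0$. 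Substituting the characteristic $b$ yields the desired differential equation for $F(x)$.

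For the minimality assertion I would argue by contradiction: suppose $f\in\CC[x]^{\delta}$ with $\deg f<\deg b$ satisfies
\[f\bigl(\tfrac{d}{dx}\bigr)p\bigl(\tfrac{d}{dx}+\tfrac{c+1}{2}\bigr)p\bigl(\tfrac{d}{dx}+\tfrac{c-1}{2}\bigr)F(x)=0.\]
Running the equivalence of Theorem \ref{caractrizacion de MWH} backward, this produces a singular vector $t^{-1}f(D-\tfrac{c+1}{2})p(D)v_\lambda$ (or its leading monic normalization), hence a monic annihilating polynomial $\widetilde{f}(x-\tfrac{c+1}{2})p(x)$ of degree strictly smaller than that of the characteristic polynomial. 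This contradicts the minimality built into the definition, so no such $f$ exists.

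For the final sentence, I would use the standard fact that a quasipolynomial solution of a constant-coefficient linear ODE $P(\tfrac{d}{dx})F=0$ must have all of its frequencies among the roots of $P$: writing
\[F(x)=\sum_i q_i(x)\cosh(e_i^+x)+\sum_j r_j(x)\sinh(e_j^-x)=\sum_\alpha s_\alpha(x)e^{\alpha x},\]
with the exponents $\alpha\in\{\pm e_i^+,\pm e_j^-\}$ distinct and the $s_\alpha$ nonzero, linear independence of the $\{x^k e^{\alpha x}\}$ forces each such $\alpha$ to be a root of $P(y)=b(y)p(y+\tfrac{c+1}{2})p(y+\tfrac{c-1}{2})$. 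Since the set of roots of $P$ is symmetric in sign only where $b$ and the shifted factors of $p$ happen to produce it, and in any case the claim is just that $e_i^+$ and $e_j^-$ lie among the roots, this gives the result.

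The only mildly delicate step is the reverse direction of the minimality argument: one must verify that the chain of manipulations in the proof of Theorem \ref{caractrizacion de MWH} is genuinely invertible (the identities \eqref{(32)}, \eqref{(23)}, \eqref{(25)} are reversible, and the pairing between $\lambda$ and polynomials in $D-\tfrac{c}{2}$ is non-degenerate on $\CC[x]^{(\overline{n})}$, so the generating-series form of the singular-vector condition encodes it without loss). Once this is checked, both statements follow.
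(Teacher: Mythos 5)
Your proposal is correct and coincides with the derivation the paper intends: the corollary is read off directly from the equivalence culminating in \eqref{(13)} (the chain of identities there is reversible, so annihilation of $v_\lambda$ by $t^{-1}f(D-\frac{c+1}{2})p(D)$ is equivalent to $f(\frac{d}{dx})p(\frac{d}{dx}+\frac{c+1}{2})p(\frac{d}{dx}+\frac{c-1}{2})F=0$), minimality of the ODE then being exactly the minimality in the definition of the characteristic polynomial, and the exponent claim following from the standard fact that the frequencies of a quasipolynomial solution of a constant-coefficient equation lie among the roots of its symbol. No gaps; the paper itself states the corollary without a separate proof, and your argument supplies precisely the missing details.
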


\section{Interplay between $\widehat{\D}_x^{\ho, \, \pm}$ and $\gl$, $c_{\infty}^{[m]}$, $d_{\infty}^{[m]}$ and $\mathcal{L}^{[m]}_{\pm}$}

Denote by $\Rm$ the quotient algebra $\CC[u]/(u^{m+1})$ and by $\textbf{1}$ the identity element of $\Rm$. 
We let $\glm$ the Lie algebra of all matrices $(a_{ij})_{i,j\in\ZZ}$ with finitely many nonzero diagonals with entries in $\Rm$. Also denote by $E_{ij}$ the infinite matrix with $\textbf{1}$ at $(i,j)$ place and $0$ elsewhere. Letting $wt \, E_{ij}=j-i$ defines the \textit{principal $\ZZ$-gradation} of $\glm$. There is a natural automorphism $\nu$ of $\glm$ given by
\begin{equation} \nu(E_{i, \, j})=E_{i+1, \, j+1}. \label{(14)} \end{equation}
Consider the following two-cocycle on $\glm$ with values in $\Rm$;
\begin{equation} C(A,B)=tr([J,A]B), \label{(15)} \end{equation}
where $J=\sum_{j\leqslant0}E_{jj}$, and denote by $\gl=\glm\oplus\Rm$ the corresponding central extension. The $\ZZ$-gradation of this Lie algebra extends from $\glm$ by letting $wt \,\Rm=0$.

Given $\lambda \in (\gl)_0^*$, we let
\begin{eqnarray}
c_j&=&\lambda(u^j), \notag \\
^a \lambda_i^{(j)}&=&\lambda(u^jE_{ii}), \label{(40)} \\
^ah_i^{(j)}&=&^a \lambda_i^{(j)}- ^a \lambda_{i+1}^{(j)}+ \delta_{i,0}\,c_j. \notag
\end{eqnarray}
where $i\in \ZZ$ and $j=0,\dots ,m$. The superscript $a$ corresponds to the type A Lie algebra $\gl$. Let $L(\gl,\lambda)$ be the irreducible highest weight $\gl$-module with highest weight $\lambda$. The $^a \lambda_i^{(j)}$ are called the {\it labels} and $c_j$ are the {\it central charges} of $L(\gl,\lambda)$.

Consider the vector space $\Rm[t,t^{-1}]$ and take its basis $v_i=t^i \ (i\in\ZZ)$ over $\Rm$. Let us consider the following $\CC$-bilinear forms on this space:
\begin{eqnarray*}
C(u^mv_i,u^nv_j)&=&u^m(-u)^n(-1)^i\delta_{i,1-j}, \\
D(u^mv_i,u^nv_j)&=&u^m(-u)^n\delta_{i,1-j}.
\end{eqnarray*}
Denote by $\overline{c}_{\infty}^{[m]}$ and $\overline{d}_{\infty}^{[m]}$ the Lie subalgebras of $\glm$ which preserves the bilinear forms $C$ and $D$ respectively. We have;
\begin{eqnarray*}
\overline{c}_{\infty}^{[m]}&=&\{A\in\glm \, | \, A_{i,j}(u)=(-1)^{i+j+1}A_{1-j,1-i}(-u)\}, \\
\overline{d}_{\infty}^{[m]}&=&\{ A\in\glm \, | \, A_{i,j}(u)=-A_{1-j,1-i}(-u) \}.
\end{eqnarray*}
Denote by $c_{\infty}^{[m]}=\overline{c}_{\infty}^{[m]}{\mbox{\small$\oplus$}}\Rm$ and $d_{\infty}^{[m]}=\overline{d}_{\infty}^{[m]}\oplus R_m$ the central extension of $\overline{c}_{\infty}^{[m]}$ and $\overline{d}_{\infty}^{[m]}$ respectively, given by the restriction of the two-cocycle \eqref{(15)}. This subalgebras inherits from $\gl$ the principal $\ZZ$-gradation.

Let $\g$ stand for $c_{\infty}^{[m]}$ or $d_{\infty}^{[m]}$. Given $\lambda \in (\g)_0^*$, denote by $L(\g ,\lambda)$ the irreducible highest weight module over $\g$ with highest weight $\lambda$. We let
\begin{eqnarray}
c_j&=&\lambda(u^j), \notag \\
^g \lambda_i^{(j)}&=&\lambda(u^jE_{ii}-(-u)^jE_{1-i,1-i}), \notag \\
^g h_i^{(j)}&=&^{\g} \lambda_i^{(j)}- ^{\g}\lambda_{i+1}^{(j)}, \label{(41)} \\
^g h_0^{(j)}&=&^{\g} \lambda_1^{(j)}+c_j \ \ \ \text{(j even)}, \notag
\end{eqnarray}
where $i\in \ZZ$, $j=0,\dots ,m$ and the superscript  $g$ represents $c$ or $d$ depending on whether $\g$ is $c_{\infty}^{[m]}$ or $d_{\infty}^{[m]}$.  The $^g \lambda_i^{(j)}$ are called the {\it labels} and $c_j$ are the {\it central charges} of $L(\g,\lambda)$.

We define
\begin{eqnarray*}
\overline{\li}^{[m]}_{\pm}&=&\{ A\in\glm \, | \, A_{i,j}(u)=-(\mp1)^{i+j}A_{-j,-i}(-u) \ \text{if} \ ij>0 \ \vee \ i=j=0; \\
                  & & A_{i,j}(u)=(\mp1)^{i+j}A_{-j,-i}(-u) \ \text{if} \ ij<0; \ A_{-i,0}(u)=(\mp1)^iuA_{0,i}(-u); \\
                  & & A_{i,0}(u)=-(\mp1)^iuA_{0,-i}(-u) \ \text{for} \ i\in\NN \},
\end{eqnarray*}
subalgebras of $\glm$. Denote by $\li^{[m]}_{\pm}=\overline{\li}^{[m]}_{\pm}{\mbox{\small$\oplus$}}\Rm$ the central extension of $\overline{\li}^{[m]}_{\pm}$, given by the restriction of the two-cocycle \eqref{(15)}. This subalgebras inherits from $\gl$ the principal $\ZZ$-gradation.

Given $\lambda \in (\mathcal{L}^{[m]}_{\pm})_0^*$, we let
\begin{eqnarray}
c_j&=&\lambda(u^j), \notag \\
^{\pm} \lambda_i^{(j)}&=&\lambda(u^jE_{i,i}-(-u)^jE_{-i,-i}), \label{(44)} \\
^{\pm} h_i^{(j)}&=&^{\pm} \lambda_i^{(j)}-^{\pm} \lambda_{i+1}^{(j)}+\delta_{i,0}\,c_j. \notag
\end{eqnarray}
where $i\in \ZZ$ and $j=0,\dots ,m$. The superscript $\pm$ corresponds to the  Lie algebras $\li^{[m]}_{\pm}$. Let $L(\li^{[m]}_{\pm},\lambda)$ be the irreducible highest weight $\gl$-module with highest weight $\lambda$. The $^{\pm}\lambda_i^{(j)}$ are called the {\it labels} and $c_j$ are the {\it central charges} of $L(\li^{[m]}_{\pm},\lambda)$.

\

Let $\ho$ denote the algebra of all holomorphic function on $\CC$ with the topology of uniform convergence on compact sets and $\ho^{(1)}$ (resp. $\ho^{(0)}$) the set of odd (resp. even) holomorphic function. We consider the vector space $\D^{\ho}$ spanned by the differential operators (of infinite order) of the form $t^kf(D)$, where $f\in\ho$. The bracket in $\D$ naturally extends to $\D^{\ho}$. Similarly, we define a completion $\D_x^{\ho,\,-}$ (resp. $\D_x^{\ho,\,+}$) of $\D_x^{-}$ (resp. $\D_x^{+}$) consisting of all differential operators of the form $t^kf(D+\frac{k}{2})D$ where $f\in\ho^{(\overline{k})}$ (resp. $f\in\ho^{(0)}$).

Then the two-cocycle $\Psi$ on $\D$ (resp. $\D_x^{\pm}$) extends to a two-cocycle on $\D^{\ho}$ (resp. $\D_x^{\ho,\,\pm}$). We denote the corresponding central extension by $\widehat{\D}^{\ho}=\D^{\ho}{\mbox{\small$\oplus$}}\CC\mathbf{C}$ (resp. $\widehat{\D}_x^{\ho,\,\pm}=\D_x^{\ho,\,\pm}{\mbox{\small$\oplus$}}\CC\mathbf{C}$).

Given $s\in\CC$, we will consider a family of homomorphism of Lie algebras $\varphi_s^{[m],\,\pm}:\D_x^{\ho,\,\pm}\rightarrow\glm$ defined by;
\begin{eqnarray}
 \varphi_s^{[m],\,\pm}(t^kf(D+\frac{k}{2})D)&=&\sum_{j\in\ZZ}f(-j+\frac{k}{2}+s+u)(-j+s+u)E_{j-k,j} \notag \\
                           &=&\sum_{j\in\ZZ}\sum_{i=0}^m \frac{(f(-j+k/2+s)(-j+s))^{(i)}}{i!}u^i E_{j-k,j} \notag \\
                           &=&\sum_{j\in\ZZ}\sum_{i=0}^m \frac{f^{(i)}(-j+\frac{k}{2}+s)}{i!}((-j+s)+u)u^iE_{j-k,j}, \notag \\
                           & & \label{fis}
\end{eqnarray}
where $f^{(i)}$ denotes the \emph{i}th derivative. Note that $\varphi_s^{[m],\,\pm}$ is the restriction to $\D_x^{\ho,\,\pm}$ of the homomorphism (3.2.1) in Ref. \cite{kr1}.
\begin{obse} The principal $\ZZ$-gradations on $\D_x^{\ho,\,\pm}$ and $\glm$ are compatible under the homomorphisms $\varphi_s^{[m], \, \pm}$. \end{obse}

Let
\begin{eqnarray*}
I_{s,\,k}^{[m],\,-}&=&\{f\in\ho^{(\overline{k})}: f^{(i)}(-j+k/2+s)=0 \ \text{for all} \ j\in\ZZ, \ 0\leqslant i\leqslant m\}, \\
I_{s,\,k}^{[m],\,+}&=&\{f\in\ho^{(0)}: f^{(i)}(-j+k/2+s)=0 \ \text{for all} \ j\in\ZZ, \ 0\leqslant i\leqslant m\},
\end{eqnarray*}
and let
\begin{eqnarray*}
J_s^{[m],\,-}&=&\bigoplus_{k\in\ZZ}\{t^kf(D+k/2):f\in I_{s,\,k}^{[m],\,-}\}, \\
J_s^{[m],\,+}&=&\bigoplus_{k\in\ZZ}\{t^kf(D+k/2):f\in I_{s,\,k}^{[m],\,+}\}.
\end{eqnarray*}

\begin{prop} \label{prop D and gl} Given $s\in(\CC-\ZZ/2)$ and $m\in\ZZ_+$ we have the following exact sequence of Lie algebras:
\begin{equation*}
 0\rightarrow J_s^{[m],\,\pm}\rightarrow \D_x^{\ho,\,\pm} \xrightarrow{\varphi_s^{[m],\pm}} \mathfrak{gl}_{\infty}^{[m]}\rightarrow 0.
 \end{equation*}
\end{prop}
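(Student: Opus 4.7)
The plan is to verify three things separately: that $\varphi_s^{[m],\pm}$ is a Lie algebra homomorphism, that $\ker\varphi_s^{[m],\pm}=J_s^{[m],\pm}$, and that $\varphi_s^{[m],\pm}$ is surjective. The first is immediate, since as the Remark just above the proposition notes, $\varphi_s^{[m],\pm}$ is the restriction to the Lie subalgebra $\D_x^{\ho,\pm}\subset\D^{\ho}$ of the Kac--Radul homomorphism (3.2.1) of \cite{kr1}.

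For the kernel I would argue graded piece by graded piece. Writing $z_j:=-j+\tfrac{k}{2}+s$ and reading the coefficient of $u^nE_{j-k,j}$ (for $0\leq n\leq m$) off the third line of \eqref{fis} gives
\[\frac{f^{(n)}(z_j)}{n!}(-j+s)+\frac{f^{(n-1)}(z_j)}{(n-1)!},\]
with the second summand absent when $n=0$. Since $s\notin\ZZ/2$ in particular forces $s\notin\ZZ$, the factor $-j+s$ is nonzero for every $j$, and equating these to zero for $n=0,1,\dots,m$ in succession yields inductively $f^{(n)}(z_j)=0$ for all $0\leq n\leq m$ and $j\in\ZZ$. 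This is exactly the defining condition of $I_{s,k}^{[m],\pm}$, identifying the kernel with $J_s^{[m],\pm}$.

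Surjectivity is the main obstacle, and again reduces to a fixed degree $k$. Given $A=\sum_{j\in\ZZ}a_j(u)E_{j-k,j}\in(\glm)_k$, the element $-j+s+u\in\Rm$ is a unit, so one sets $b_j(u):=a_j(u)(-j+s+u)^{-1}\in\Rm$, and the task becomes to find $f$ of the correct parity (in $\ho^{(0)}$ for $+$, in $\ho^{(\overline{k})}$ for $-$) with $f(z_j+u)\equiv b_j(u)\pmod{u^{m+1}}$ for every $j\in\ZZ$. Here the full strength of $s\notin\ZZ/2$ enters: it guarantees both $z_j\neq 0$ and $z_j\neq -z_{j'}$ for all $j,j'\in\ZZ$, since either equality would force $2s\in\ZZ$. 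Consequently $\{z_j\}$ and $\{-z_j\}$ are disjoint discrete subsets of $\CC\setminus\{0\}$.

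To finish I would invoke the standard Hermite-type Mittag--Leffler/Weierstrass interpolation theorem for entire functions to construct $F\in\ho$ whose prescribed truncated Taylor expansion at each point of the disjoint union $\{z_j\}\cup\{-z_j\}$ equals $b_j(u)$ at $z_j$ and $\pm b_j(-u)$ at $-z_j$, with sign chosen according to the desired parity of $f$. The symmetrization $f(z):=\tfrac{1}{2}(F(z)\pm F(-z))$ then lies in $\ho^{(0)}$ or $\ho^{(1)}$ as required, and by disjointness of $\{z_j\}$ and $\{-z_j\}$ still satisfies $f(z_j+u)\equiv b_j(u)\pmod{u^{m+1}}$. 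Then $t^kf(D+k/2)D$ is the desired preimage of $A$ in $\D_x^{\ho,\pm}$, completing the exact sequence.
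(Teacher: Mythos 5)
Your proposal is correct and follows essentially the same route as the paper: both prove surjectivity by invoking the classical interpolation theorem for holomorphic functions with prescribed jets on a discrete set, both use $s\notin\ZZ/2$ precisely to make $\{-j+\tfrac{k}{2}+s\}_{j\in\ZZ}$ and $\{j-\tfrac{k}{2}-s\}_{j\in\ZZ}$ disjoint, and both then symmetrize to land in the correct parity class. The one worthwhile difference is that you interpolate the data of a whole diagonal at once (after inverting the unit $-j+s+u$ in $\Rm$), whereas the paper only produces the single elements $(u+(s-j_0))u^{i_0}E_{j_0-k,j_0}$ and concludes from their spanning $\Rm$ fiberwise; your version is actually the cleaner statement of what is needed, since a graded piece of $\glm$ may have infinitely many nonzero entries along one diagonal and is not just the span of finitely many such basis elements. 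Your explicit inductive identification of the kernel (using $-j+s\neq 0$) fills in a step the paper dismisses as clear.
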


\begin{proof}
It is clear that $ker \varphi_s^{[m],\,\pm}=J_s^{[m],\,\pm}$. We only  need to prove that $\varphi_s^{[m],\pm}$ is surjective. We recall the following well known fact: for every discrete sequence of points in $\CC$ and a non-negative integer $m$ there exists $a(x)\in\ho$ having the prescribed values of its first $m$ derivatives at these points.

Case $\varphi_s^{[\overrightarrow{m}],-}$ (resp. $\varphi_s^{[\overrightarrow{m}],+}$):  since $s\not\in\ZZ/2$ the sequence $\{-j+k/2+s\}_{j\in\ZZ}$ and $\{j-k/2-s\}_{j\in\ZZ}$ are disjoint. We fix $0\leqslant i_0\leqslant m$ and $j_0, \ k\in\ZZ$, then there exists $a(x)\in\ho$ such that

\medskip
$\cdot$ if $k$ is even, $a^{(i)}(-j+k/2+s)=a^{(i)}(j-k/2-s)=\delta_{i,\,i_0}\delta_{j,\,j_0}/2$,

\medskip
$\cdot$ if $k$ is odd, $a^{(i)}(-j+k/2+s)=2\delta_{i,\,i_0}\delta_{j,\,j_0}, \ \ a^{(i)}(j-k/2-s)=\delta_{i,\,i_0}\delta_{j,\,j_0}$,

\medskip
\noindent (resp. $a^{(i)}(-j+k/2+s)=a^{(i)}(j-k/2-s)=\delta_{i,\,i_0}\delta_{j,\,j_0}/2$), and let
\begin{equation*} g(x)=i_0!(a(x)+(-1)^ka(-x)), \end{equation*}
(resp. $g(x)=i_0!(a(x)+a(-x))$). Then
\begin{equation*}
\varphi_s^{[m],\pm}(t^kg(D+k/2)D)=(u+(s-j_0))u^{i_0}E_{j_0-k,\,j_0},
\end{equation*}
and since $\{(u+(s-j_0))u^i\}_{i=0}^m$ is basis of $\Rm$, then  \ $\varphi_s^{[m],\pm}$ is surjective, finishing the proof.
\end{proof}

\begin{prop} \label{prop D and c, d}
For $s=\frac{1}{2}$, we have the following exact sequence of Lie algebras:
\begin{equation*}
0 \rightarrow J^{[m]}_{\frac{1}{2}} \rightarrow \D_x^{\ho,\,\pm} \rightarrow \overline{\g}_{\pm} \rightarrow 0,
\end{equation*}
where $\overline{\g}_+:= \overline{c}_{\infty}^{[m]}$ and $\overline{\g}_-:= \overline{d}_{\infty}^{[m]}$.
\end{prop}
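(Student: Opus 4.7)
The plan mirrors Proposition \ref{prop D and gl}, specialized to the degenerate parameter $s=\tfrac12$. The key observation is that the two sequences of evaluation points $\{-j+\tfrac{k}{2}+\tfrac12\}_{j\in\ZZ}$ and $\{j-\tfrac{k}{2}-\tfrac12\}_{j\in\ZZ}$, which were disjoint when $s\notin\ZZ/2$, now coincide as the single set $\tfrac{k+1}{2}+\ZZ$, with each point $x_j=-j+\tfrac{k+1}{2}$ paired to its negative $-x_j=x_{k+1-j}$. This collapse of the spectrum is what forces the image to land in the reflection-symmetric subalgebra $\overline{\g}_\pm\subset\glm$ rather than surjecting onto the whole $\glm$.

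First I would verify the containment $\varphi_{1/2}^{[m],\pm}(\D_x^{\ho,\pm})\subseteq\overline{\g}_\pm$ by direct computation on homogeneous elements. For $X=t^kf(D+\tfrac{k}{2})D$ the $(j-k,j)$-entry of $\varphi_{1/2}^{[m],\pm}(X)$ is $f(-j+\tfrac{k+1}{2}+u)(-j+\tfrac12+u)$; substituting $j\mapsto k+1-j$ and $u\mapsto -u$ produces the partner entry at $(1-j,1-j+k)$ prescribed by the defining symmetry of $\overline{\g}_\pm$. The parity hypothesis $f\in\ho^{(\overline{k})}$ (resp.\ $f\in\ho^{(0)}$) converts $f(-x)$ into $\pm f(x)$, while the linear factors transform by a sign coming from the identity $p(x)=-p(-x+c)$ specialized to $p(x)=x$, $c=0$; the combined sign must be matched against the prescribed $(-1)^{i+j+1}$ (resp.\ $-1$) in the definition of $\overline{c}_\infty^{[m]}$ (resp.\ $\overline{d}_\infty^{[m]}$).

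Next, the kernel is immediate from the containment computation: since $-j+\tfrac12+u$ has invertible constant term in $R_m$, the $(j-k,j)$-entry vanishes if and only if $f(-j+\tfrac{k+1}{2}+u)=0$ in $R_m$, equivalently $f^{(i)}(-j+\tfrac{k+1}{2})=0$ for every $0\le i\le m$ and $j\in\ZZ$. Comparing with the definitions of $I_{1/2,k}^{[m],\pm}$ and $J_{1/2}^{[m],\pm}$, this yields $\ker\varphi_{1/2}^{[m],\pm}=J_{1/2}^{[m],\pm}$.

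The hard step will be surjectivity. Given $A\in\overline{\g}_\pm$ with finitely many nonzero diagonals, I would construct a preimage diagonal by diagonal, using that the defining involution of $\overline{\g}_\pm$ preserves each diagonal. On the $k$-th diagonal this amounts to exhibiting $f\in\ho^{(\overline{k})}$ (resp.\ $\ho^{(0)}$) whose first $m$ derivatives take prescribed values at every point of $\tfrac{k+1}{2}+\ZZ$. Because this set is stable under $x\mapsto -x$, the parity constraint on $f$ couples the prescribed values at $x_j$ and at $-x_j=x_{k+1-j}$, and the coupling is exactly the $\overline{\g}_\pm$-symmetry already satisfied by $A$, so the interpolation problem is consistent. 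One then chooses a fundamental domain for the negation action, invokes the same holomorphic interpolation result used in the proof of Proposition \ref{prop D and gl} to produce a holomorphic function with the required Taylor data on that fundamental domain, and finally symmetrizes (resp.\ antisymmetrizes) to obtain the $f$ of the required parity; the factor $-j+\tfrac12+u$ being a unit in $R_m$ allows one to match the prescribed coefficients of $A$ exactly.
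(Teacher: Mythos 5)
There is a genuine gap at your first step: the containment $\varphi_{1/2}^{[m],\pm}(\D_x^{\ho,\,\pm})\subseteq\overline{\g}_\pm$ is false, and the rest of your argument is aimed at the wrong target because of it. Carry out the computation you describe for $X=t^kf(D+\frac{k}{2})D$: the $(j-k,j)$-entry of $\varphi_{1/2}^{[m],\pm}(X)$ is $A_{j-k,j}(u)=f(-j+\frac{k+1}{2}+u)(-j+\frac12+u)$, while the partner entry demanded by the symmetry of, say, $\overline{d}_{\infty}^{[m]}$ is $-A_{1-j,\,1-j+k}(-u)=(-1)^{\overline{k}}f(-j+\frac{k+1}{2}+u)(-j+k+\frac12+u)$ after using the parity of $f$. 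The factors involving $f$ do match up to sign, but the linear factor contributed by the extra $D$ does not: the discrepancy is the ratio $(-j+k+\frac12+u)\big/(-j+\frac12+u)$, a unit of $\Rm$ that is not $\pm1$ whenever $k\neq0$. No amount of sign bookkeeping absorbs a nonconstant unit, so the image of $\D_x^{\ho,\,\pm}$ under $\varphi_{1/2}^{[m]}$ is not $\overline{\g}_\pm$; it is the $-w_\pm$-fixed subalgebra for the twisted anti-involution $w_{\pm}(f(u)E_{i,j})=(\pm1)^{i+j}(-u+\frac12-i)(-u+\frac12-j)^{-1}f(-u)E_{1-j,1-i}$ obtained by transporting $\sigma_\pm$ through $\varphi_{1/2}^{[m]}$.

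This is precisely why the paper introduces the rescaling automorphism $T$ of $\glm$, $T(u^lE_{i,j})=\prod_{k=i}^{j-1}\bigl(u-(k+\frac12)\bigr)u^lE_{i,j}$ for $i<j$ (with the inverse product for $i>j$), which conjugates $w_\pm$ into the standard anti-involution $\rho_\pm$ defining $\overline{\g}_\pm$; the surjection asserted in the proposition is $T^{-1}\circ\varphi_{1/2}^{[m]}$, not $\varphi_{1/2}^{[m]}$ itself (this is the abuse of notation recorded in Remark \ref{obse1}\textit{(a)}). Your kernel identification survives unchanged, since postcomposing with an automorphism does not alter the kernel. Your surjectivity strategy --- prescribing Taylor data on a fundamental domain for $x\mapsto-x$ via the holomorphic interpolation lemma and then (anti)symmetrizing --- is in itself a reasonable alternative to the paper's route (which instead deduces surjectivity onto the fixed-point subalgebra from the already established surjectivity of $\varphi_{1/2}^{[m]}:\D_x^{\ho}\to\glm$ together with equivariance), but it must be directed at the $-w_\pm$-fixed subalgebra, or equivalently rerun after inserting $T^{-1}$, before it proves the statement as written.
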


\begin{proof}
The homomorphism $\varphi_{\frac{1}{2}}^{[m]}: \D_x^{\ho} \rightarrow \glm$ defined by
\begin{eqnarray*}
\varphi_{\frac{1}{2}}^{[m]}(t^kf(D)D)&=&\sum_{j\in\ZZ}f(u+\frac{1}{2}-j)(u+\frac{1}{2}-j)E_{j-k,j} \\
                                     &=&\sum_{j\in\ZZ}\sum_{i=0}^m \frac{f^{(i)}(\frac{1}{2}-j)}{i!}((\frac{1}{2}-j)+u)u^iE_{j-k,j},
\end{eqnarray*}
is surjective and the anti-involution $\sigma_{\pm}$ is transferred through this homomorphism  to an anti-involution $w_{\pm}$ in $\glm$, that satisfies
\begin{equation*}
w_{\pm}\left( (u + \frac{1}{2}-j)f(u) E_{i,j} \right)=(\pm 1)^{i+j}(-u + \frac{1}{2}-i)f(-u) E_{1-j,1-i},
\end{equation*}
with $f\in\CC[x]$, from which it is easy to see that 
\begin{equation}
w_{\pm}(f(u) E_{i,j})=(\pm 1)^{i+j}(-u + \frac{1}{2}-i)(-u + \frac{1}{2}-j)^{-1}f(-u) E_{1-j,1-i}. \label{(38)}
\end{equation}
Then, the Lie algebra of $-\sigma_{\pm}$-fixed points in $\D_x^{\ho}$ (namely $\D_x^{\ho,\,\pm}$), maps surjectively to the Lie algebra of $-w_{\pm}$-fixed points in $\glm$.

Now, we define the automorphism $T:\glm \longrightarrow \glm$ by
\begin{eqnarray*}
T(u^l E_{i,j})&=&\prod_{k=i}^{j-1} \left(u-(k+\frac{1}{2})\right)u^lE_{i,j} \ \ \ \text{if $i<j$}, \\
T(u^l E_{i,i})&=&u^l E_{i,i}, \\
T(u^l E_{i,j})&=&\prod_{k=j}^{i-1} \left(u-(k+\frac{1}{2})\right)^{-1}u^lE_{i,j} \ \ \ \text{if $i>j$}.
\end{eqnarray*}
On the other hand, let $\rho_{\pm}(f(u) E_{i,j})=(\mp1)^{i+j}f(-u) E_{1-j,1-i}$ be the anti-involution in $\glm$ that define $\overline{\g}_{\pm}$, then using \eqref{(38)} we have  that
\begin{equation} T \rho_{\pm} |_{(\glm)_{- 1} \bigoplus (\glm)_0\bigoplus (\glm)_1} = w_{\pm} T |_{(\glm)_{- 1} \bigoplus (\glm)_0\bigoplus (\glm)_1}. \label{(39)} \end{equation}
Then, since $\glm$ is generated by $ (\glm)_{-1} \bigoplus (\glm)_0 \bigoplus (\glm)_1$ and using  \eqref{(39)}, we obtain that $\rho_{\pm} = T ^{-1} w_{\pm} T$. As before, the Lie algebra of $-w_{\pm}$-fixed points in $\glm$ maps surjectively through of $T^{-1}$ to the Lie algebra of $-\rho_{\pm}$-fixed points in $\glm$, namely $\overline{\g}_{\pm}$. So $T^{-1} \, \varphi_{\frac{1}{2}}^{[m]}$ maps surjectively $\D_x^{\ho,\,\pm}$ in $\overline{\g}_{\pm}$ and since $T^{-1}$ is an automorphism it is clear that $ker T^{-1} \, \varphi_{\frac{1}{2}}^{[m],\,\pm}=J_{\frac{1}{2}}^{[m],\,\pm}$, finishing the proof.
\end{proof}

\begin{prop} \label{prop D and L}
For $s=0$, we have the following exact sequence of Lie algebras:
\begin{equation*}
0 \rightarrow J^{[m]}_{0} \rightarrow \D_x^{\ho,\,\pm} \rightarrow \overline{\li}^{[m]}_{\pm} \rightarrow 0.
\end{equation*}
\end{prop}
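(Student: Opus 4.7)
The plan is to imitate the proof of Proposition~\ref{prop D and c, d}. We take the homomorphism $\varphi_0^{[m]}: \D_x^{\ho} \to \glm$ supplied by~\eqref{fis} at $s = 0$, whose value on $t^k f(D+k/2)\,D$ is $\sum_{j\in\ZZ} f(u-j+k/2)(u-j)\, E_{j-k,\,j}$, and use the fact (to be proved first) that the anti-involution $\sigma_\pm$ is transferred by $\varphi_0^{[m]}$ to an anti-involution $w_\pm$ of the image. Unlike the case $s=1/2$, the factor $(u-j)$ has a zero at $j=0$: since $u$ is not invertible in $\Rm$, every entry in column $j=0$ of any element of the image lies in $u\Rm$. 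Accordingly $\varphi_0^{[m]}$ is not surjective onto $\glm$, but its image fits exactly the defining constraints of $\overline{\li}^{[m]}_\pm$ after a twist.

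The second step is to construct an automorphism $T:\glm\to\glm$ (the analogue of the $T$ of Proposition~\ref{prop D and c, d}) that conjugates $w_\pm$ into the anti-involution $\rho_\pm$ whose $-$fixed points are $\overline{\li}^{[m]}_\pm$. As before, $T$ should act diagonally on the $E_{i,j}$ by a product of factors $(u-k)$ indexed by integers $k$ in the interval between $i$ and $j$; the construction must be modified near $k=0$ to accommodate the fact that $u$ is a zero-divisor in $\Rm$, and this modification is precisely what produces the piecewise structure of $\overline{\li}^{[m]}_\pm$: pairs $(i,j)$ with $ij>0$ (or $i=j=0$), pairs with $ij<0$, and the special rules $A_{-i,0}(u)=(\mp 1)^i u\, A_{0,i}(-u)$ and $A_{i,0}(u)=-(\mp 1)^i u\, A_{0,-i}(-u)$ arise, respectively, from intervals $[\min(i,j),\max(i,j)]$ that avoid $0$, straddle $0$, or have $0$ as an endpoint.

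Having defined $T$, one verifies the identity $\rho_\pm=T^{-1}w_\pm T$ on the generating subspace $(\glm)_{-1}\oplus(\glm)_0\oplus(\glm)_1$, exactly as in the proof of Proposition~\ref{prop D and c, d}. It then follows that $T^{-1}\circ\varphi_0^{[m]}$ carries the $-\sigma_\pm$-fixed subalgebra $\D_x^{\ho,\pm}$ surjectively onto the $-\rho_\pm$-fixed subalgebra $\overline{\li}^{[m]}_\pm$; surjectivity is obtained by the same holomorphic-interpolation argument used in the proof of Proposition~\ref{prop D and gl}, now using the factor $u$ itself as a compensating multiplier at column $0$ to cover the missing Dirac data at $s=0\in\ZZ/2$. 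Finally, the kernel of $T^{-1}\circ\varphi_0^{[m]}|_{\D_x^{\ho,\pm}}$ coincides with $J_0^{[m],\pm}$ by unwinding the vanishing conditions that define $I_{0,k}^{[m],\pm}$, using that $u$ annihilates only constant terms in $\Rm$ so that at $j=0$ the $(u-j)$ factor contributes no extra kernel relations.

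The main technical obstacle is the second step: exhibiting an explicit $T$ so that conjugation $T^{-1}w_\pm T$ reproduces the piecewise defining relations of $\overline{\li}^{[m]}_\pm$ on the nose. The case $s=0$ is borderline precisely because $u\in\Rm$ is a zero-divisor, so one cannot simply invert the factor $(u-j)\vert_{j=0}$ as was done in Proposition~\ref{prop D and c, d}; the balancing of the residual powers of $u$ in the image of $\varphi_0^{[m]}$ against the absent inverse in $T$ is exactly what forces the three distinct defining relations of $\overline{\li}^{[m]}_\pm$ (bulk with $ij>0$, bulk with $ij<0$, boundary at row/column zero). Once this $T$ is written down, the remainder of the argument is a straightforward adaptation of the proof of Proposition~\ref{prop D and c, d}.
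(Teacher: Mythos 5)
Your plan is the paper's plan: the paper also takes $\varphi_0^{[m]}$ from \eqref{fis} and an automorphism $T$ of $\glm$ built from products of the factors $(u-k)$, and concludes that $T\varphi_0^{[m]}$ carries $\D_x^{\ho,\pm}$ onto $\overline{\li}^{[m]}_{\pm}$ with kernel $J_0^{[m],\pm}$. But you stop precisely at the step you yourself call ``the main technical obstacle'': you never exhibit $T$. The resolution is simpler than your discussion suggests --- no compensating device for the zero-divisor $u$ is built into $T$; one simply omits the $k=0$ factor. Explicitly, $T(u^lE_{i,j})=\prod_{k=i,\,k\neq 0}^{j-1}(u-k)\,u^lE_{i,j}$ for $i<j$, $T(u^lE_{i,i})=u^lE_{i,i}$, and $T(u^lE_{i,j})=\prod_{k=j,\,k\neq 0}^{i-1}(u-k)^{-1}\,u^lE_{i,j}$ for $i>j$; every surviving factor $u-k$ with $k\neq 0$ is a unit of $\Rm$, so $T$ is a genuine automorphism. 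The divisibility by $u$ in column $0$ is already present in the image of $\varphi_0^{[m]}$ (as you correctly observe) and passes through $T$ untouched; that, combined with the skipped factor, is what yields the three-case description of $\overline{\li}^{[m]}_{\pm}$. So the construction you defer is a one-line modification of the $T$ of Proposition \ref{prop D and c, d}, not a new mechanism, and without it your argument is an outline rather than a proof.

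There is also a genuine error in your kernel step. You assert that the factor $(u-j)\vert_{j=0}=u$ ``contributes no extra kernel relations'' because ``$u$ annihilates only constant terms in $\Rm$''; in fact $u$ annihilates the top power $u^m$, not constants, and the effect runs in the opposite direction. In column $j=0$ the entry of $\varphi_0^{[m]}(t^kf(D+k/2)D)$ is $f(k/2+u)\,u=\sum_{i=0}^{m-1}\frac{f^{(i)}(k/2)}{i!}\,u^{i+1}$, so the condition $f^{(m)}(k/2)=0$ appearing in the definition of $I^{[m],\pm}_{0,k}$ is \emph{not} detected by $\varphi_0^{[m]}$, and a priori the kernel is \emph{larger} than $J_0^{[m],\pm}$. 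To recover the missing condition one must use the parity of $f$: for $k\neq 0$ the point $-k/2$ equals $-j+k/2$ with $j=k\neq 0$, so $f^{(m)}(-k/2)=0$ is among the imposed relations and the evenness or oddness of $f^{(m)}$ forces $f^{(m)}(k/2)=0$; the case $k=0$ needs a separate check of $f^{(m)}(0)$, which depends on the parities of $m$ and $f$. This is the one point where $s=0$ differs substantively from $s=\tfrac{1}{2}$ and from Proposition \ref{prop D and gl}, and it is exactly the point that your proposal (and, to be fair, the paper's own one-line assertion that the kernel identification ``is clear'') passes over.
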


\begin{proof}
We consider the following morphisms of Lie algebras: $\varphi_0^{[m]}$ as in \eqref{fis} and $T:\glm \longrightarrow \glm$ defined by
\begin{eqnarray*}
T(u^l E_{i,j})&=&\prod_{\substack {k=i \\ k\neq0}}^{j-1} (u-k)\,u^lE_{i,j} \ \ \ \text{if $i<j$}, \\
T(u^l E_{i,i})&=&u^l E_{i,i}, \\
T(u^l E_{i,j})&=&\prod_{\substack {k=i \\ k\neq0}}^{i-1} (u-k)^{-1}u^lE_{i,j} \ \ \ \text{if $i>j$},
\end{eqnarray*}
note that $T(u^l E_{i,j})=u^l \,T(E_{i,j})$. Then, $T\varphi_0^{[m]}:\D_x^{\ho,\,\pm}\rightarrow \overline{\li}^{[m]}_{\pm}$ is surjective and since $T$ is an automorphism, it is clear that $ker T \varphi_0^{[m]}=J_0^{[m],\,\pm}$, finishing the proof.
\end{proof}

\begin{obse} \label{obse1}
\textit{(a)} For $ s =0$ and $s=\frac{1}{2} $, by an abuse the notation we will denote again $\varphi_s^{[m], \, \pm}$ the surjective homomorphism of the Propositions \ref{prop D and c, d}, \ref{prop D and L}.

\noindent \textit{(b)} For $s\in\ZZ$ (respectively $s\in\ZZ+\frac{1}{2}$) the image of $\D_x^{\ho,\pm}$ under the homomorphism $\varphi_s^{[m], \, \pm}$ is $\nu^{\widetilde{s}}\varphi_0^{[m], \, \pm}(\D_x^{\ho,\pm})$ (respectively $\nu^{\widetilde{s}}\varphi_{\frac{1}{2}}^{[m], \, \pm}(\D_x^{\ho,\pm})$), where $\nu$ was defined in \eqref{(14)} and $\widetilde{s}=s$ (respectively $\widetilde{s}=s-\frac{1}{2}$). Therefore, we will only consider $s=0, \ \frac{1}{2}$ throughout the article.

\noindent \textit{(c)} Observe that Proposition \ref{prop D and L} is the corrected version of Proposition 5.3 in \cite{bl}.
\end{obse}

Now we want to extend the homomorphism $\varphi_s^{[m], \, \pm}$ to a homomorphism between the central extensions of the corresponding Lie algebras. Define
\begin{equation*}
\eta_i(x,s)=\frac{e^{(s-1/2) x}+(-1)^ie^{-(s-1/2) x}}{2}\frac{x^i}{i!} \ \ \ (i\in\ZZ_+, \ s\in\CC).
\end{equation*}
The functions $\eta_i(x,s)$ satisfy:
\begin{eqnarray}
\eta_i(x,-s)&=&(-1)^i\eta_i(x,s+1), \label{(42)} \\
\eta_0(x,s+1/2)&=&\cosh(s x). \notag
\end{eqnarray}

\begin{prop} \label{prop extend fis}
The homomorphism $\varphi_s^{[m],\pm}$ lifts to a Lie algebra homomorphism $\fiss$ of the corresponding central extensions as follows:
\begin{eqnarray}
\fiss|_{(\widehat{\D}_x^{\pm})_j}&=&\varphi_s^{[m],\pm}|_{(\D_x^{\pm})_j} \ \ \text{(for $j\neq0$)}, \notag \\
\fiss(\sinh(xD))&=&\frac{1}{2}\sum_{i=0}^m\sum_{j\in\ZZ}\frac{\eta_i(x,s-j+1)-\eta_i(x,s-j)}{\sinh(x/2)}u^iE_{j,j}  \notag \\
                & &-\frac{1}{2}\sum_{i=0}^m\frac{\eta_i(x,s)}{\sinh(x/2)}u^ic_0+\frac{1}{2}\frac{\cosh(x/2)}{\sinh(x/2)}c_0, \label{(extend fis)} \\
\fiss(c_0)&=&1. \notag
\end{eqnarray}
\end{prop}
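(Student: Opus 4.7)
The idea is to reduce the homomorphism property to a single cocycle-matching identity in degree zero, and then to verify that identity by a generating-function computation. Since the 2-cocycle $\Psi$ vanishes whenever $r+s\neq 0$ and since $\varphi_s^{[m],\pm}$ is already a Lie algebra homomorphism of $\D_x^{\ho,\pm}$ (Propositions \ref{prop D and gl}, \ref{prop D and c, d} and \ref{prop D and L}), the prescriptions $\fiss|_{(\widehat{\D}_x^{\pm})_j}=\varphi_s^{[m],\pm}$ for $j\neq 0$ and $\fiss(\mathbf{C})=1$ automatically respect every bracket whose result lies outside degree zero. The only non-trivial condition to check is
\[
[\fiss(t^r f(D+r/2)D),\fiss(t^{-r}g(D-r/2)D)] = \fiss\bigl([t^r f(D+r/2)D,t^{-r}g(D-r/2)D]\bigr)
\]
for $r\neq 0$ and admissible $f,g$, where the right-hand side splits as $\fiss$ of a degree-zero element (an odd polynomial in $D$, hence captured by the series $\sinh(xD)$) plus the scalar $\Psi(\cdot,\cdot)\mathbf{C}$.

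First, I would interpret the stated formula for $\fiss(\sinh(xD))$ as the sum of a ``naive'' part $\sum_j \sinh(x(s-j+u))E_{jj}$, obtained by applying $\varphi_s^{[m],\pm}$ term by term to $\sinh(xD)=\sum_k \frac{x^{2k+1}}{(2k+1)!}D^{2k+1}$, plus an explicit central correction. Using the definition of $\eta_i$ and its symmetry \eqref{(42)}, a direct check shows that $(\eta_i(x,s-j+1)-\eta_i(x,s-j))/(2\sinh(x/2))$ is exactly the coefficient of $u^i E_{jj}$ in $\sinh(x(s-j+u))$. The remaining summands $-\tfrac{1}{2}\sum_i \frac{\eta_i(x,s)}{\sinh(x/2)}u^i c_0$ and $\tfrac{1}{2}\frac{\cosh(x/2)}{\sinh(x/2)}c_0$ are central and are forced by the cocycle-matching condition below.

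Next, I would verify the cocycle-matching identity using exponential generating functions. Taking $f(D)=e^{xD}$ and $g(D)=e^{yD}$ for formal parameters $x,y$, the left-hand side becomes a commutator in $\gl$ that can be evaluated in closed form via $C(A,B)=\operatorname{tr}([J,A]B)$, while the right-hand side simplifies via the identities \eqref{(23)}, \eqref{(25)} and the explicit formula for $\Psi$. Both expressions reduce to a single algebraic identity among $\eta_i$, $\sinh(\cdot/2)$, $\cosh(\cdot/2)$ and $e^{(s-1/2)\cdot}$, and the central correction in the definition of $\fiss(\sinh(xD))$ is precisely what is needed for equality. For the classical cases $s=0$ and $s=\tfrac{1}{2}$, one then composes with the automorphisms $T$ of Propositions \ref{prop D and c, d} and \ref{prop D and L} to land in $\overline{c}_{\infty}^{[m]}$, $\overline{d}_{\infty}^{[m]}$ or $\overline{\li}^{[m]}_{\pm}$.

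The main obstacle is the bookkeeping at this last step: $\Psi$ produces a single scalar whereas $C$ produces a full polynomial in $u$, and one must track how each $u^i$-component arises from differentiating $\eta_i$ with respect to its second argument and summing the contributions over $j\in\ZZ$. Once the generating-function identity is in hand, extracting coefficients in $x$ and $y$ yields the homomorphism property on a spanning set, and linearity extends it to all of $\widehat{\D}_x^{\pm}$.
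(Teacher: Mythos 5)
Your plan is correct and is essentially the argument the paper relies on: the paper's own proof is a one-line reduction (noting that $(\widehat{\D}_x^+)_0=(\widehat{\D}_x^-)_0$) followed by a citation of Proposition 5.2 in \cite{bl}, which carries out precisely the degree-zero cocycle-matching computation you outline. Your identification of the first sum as the naive image $\sum_{j}\sinh(x(s-j+u))E_{jj}$ (a correct coefficient check against $\eta_i$) and of the remaining terms as the central correction forced by matching $\Psi$ against $C(A,B)=\operatorname{tr}([J,A]B)$ is exactly the content of that reference, so the only thing separating your proposal from a complete proof is actually executing the generating-function identity you describe.
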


\begin{proof}
Note that $(\widehat{\D}_x^+)_0=(\widehat{\D}_x^-)_0$ and therefore $\widehat{\varphi}_s^{[m],+}|_{(\widehat{\D}_x^+)_0}=\widehat{\varphi}_s^{[m],-}|_{(\widehat{\D}_x^-)_0}$. See Proposition 5.2 in \cite{bl}.
\end{proof}

Let $\overline{m}=(m_1,\ldots,m_N)\in\ZZ_+^N$ and $\overline{s}=(s_1,\ldots,s_N)\in\CC^N$ be such that $s_i\in\ZZ$ implies $s_i=0$, $s_i\in\ZZ+\frac{1}{2}$ implies $s_i=\frac{1}{2}$ and $s_i \neq \pm s_j \ mod\ \ZZ$ for $i \neq j$, combining the Propositions \ref{prop D and gl}, \ref{prop D and c, d}, \ref{prop D and L} and \ref{prop extend fis} we obtain the following result.

\begin{prop} \label{prop extend iso sum D en sum g}
Given $\overline{m}$ and $\overline{s}$ as above, we have the following exact sequence of Lie algebras:
\begin{equation*}
0\rightarrow \bigcap_{i=1}^N J_{s_i}^{[m_i],\pm} \rightarrow \widehat{\D}_x^{\ho,\pm} \xrightarrow{\widehat{\varphi}_{\overline{s}}^{[\overline{m}], \, \pm}} \g_{\pm}^{[\overline{m}]} \rightarrow 0,
\end{equation*}
where \ $\widehat{\varphi}_{\overline{s}}^{[\overline{m}], \, \pm}=\bigoplus_{i=0}^N \widehat{\varphi}_{s_i}^{[m_i], \, \pm}$ and $\g_{\pm}^{[\overline{m}]}=\bigoplus_{i=0}^N \g_{\pm}^{[m_i]}$ with 
\[ \g_+^{[m_i]}=
\begin{cases}
  \widehat{\mathfrak{gl}}_{\infty}^{[m_i]}, & \text{if} \ s_i \neq 0, \ \frac{1}{2}, \\
  c_{\infty}^{[m_i]}, & \text{if} \ s_i=\frac{1}{2}, \\
  \li^{[m_i]}_+, & \text{if} \ s_i=0,
\end{cases}
\]
and
\[ \g_-^{[m_i]}=
\begin{cases}
  \widehat{\mathfrak{gl}}_{\infty}^{[m_i]}, & \text{if} \ s_i \neq 0, \ \frac{1}{2}, \\
  d_{\infty}^{[m]}, & \text{if} \ s_i=\frac{1}{2}, \\
  \li^{[m_i]}_-, & \text{if} \ s_i=0.
\end{cases}
\]
\end{prop}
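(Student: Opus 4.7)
My plan is to assemble the proposition from the three individual exact sequences in Propositions \ref{prop D and gl}, \ref{prop D and c, d}, \ref{prop D and L} (which give surjectivity of each $\varphi_{s_i}^{[m_i],\pm}$ with kernel $J_{s_i}^{[m_i],\pm}$ for the specified value of $s_i$), together with the lifting to central extensions in Proposition \ref{prop extend fis}. The identification of the kernel of the direct sum $\widehat{\varphi}_{\overline{s}}^{[\overline{m}],\pm}=\bigoplus_i\widehat{\varphi}_{s_i}^{[m_i],\pm}$ is immediate: an element lies in $\ker\widehat{\varphi}_{\overline{s}}^{[\overline{m}],\pm}$ if and only if it lies in every $\ker\widehat{\varphi}_{s_i}^{[m_i],\pm}=J_{s_i}^{[m_i],\pm}$, so the kernel is $\bigcap_{i=1}^N J_{s_i}^{[m_i],\pm}$. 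Thus the content of the proposition is the surjectivity of the direct sum.

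To prove surjectivity, it suffices, by linearity, to produce for every index $i$ and every $A_i\in\g_{\pm}^{[m_i]}$ an element $X_i\in\widehat{\D}_x^{\ho,\pm}$ with $\widehat{\varphi}_{s_i}^{[m_i],\pm}(X_i)=A_i$ and $\widehat{\varphi}_{s_j}^{[m_j],\pm}(X_i)=0$ for $j\neq i$; then $\sum_i X_i$ maps to $(A_1,\dots,A_N)$. The key geometric input is that the assumption $s_i\not\equiv\pm s_j\pmod{\ZZ}$ together with the normalization $s_i\in\{0,\tfrac12\}$ when $s_i\in\tfrac12\ZZ$ guarantees that for every fixed $k\in\ZZ$ the evaluation sets $\{-j+\tfrac{k}{2}+s_i\}_{j\in\ZZ}\cup\{j-\tfrac{k}{2}-s_i\}_{j\in\ZZ}$ ($i=1,\dots,N$) are pairwise disjoint and discrete in $\CC$. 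Using the same interpolation fact invoked in the proofs of Propositions \ref{prop D and gl}, \ref{prop D and c, d}, \ref{prop D and L} — that on any discrete subset of $\CC$ one can prescribe the first $m$ derivatives of a holomorphic function — I can, starting from a preimage of $A_i$ under $\widehat{\varphi}_{s_i}^{[m_i],\pm}$, modify it within each graded piece by an element of the form $t^kf(D+\tfrac{k}{2})D$ so that $f$ vanishes to sufficient order at the evaluation points of every $s_j$, $j\neq i$, without altering its values (and derivatives up to order $m_i$) at the evaluation points of $s_i$; this adjusted element lies in $\bigcap_{j\neq i}J_{s_j}^{[m_j],\pm}$ and still has image $A_i$ in $\g_\pm^{[m_i]}$.

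The main subtlety, and the step that will need the most care, is the central part: Proposition \ref{prop extend fis} shows that $\widehat{\D}_x^{\ho,\pm}$ has a one-dimensional center $\CC c_0$, while the target $\bigoplus_i R_{m_i}$ is strictly larger than the diagonal copy hit by $c_0\mapsto(\mathbf{1},\dots,\mathbf{1})$. One must therefore produce the extra central directions from the degree-zero piece of $\widehat{\D}_x^{\ho,\pm}$, using the explicit formula \eqref{(extend fis)} for $\widehat{\varphi}_s^{[m],\pm}(\sinh(xD))$: expanding in $x$ one reads off that for each $i$ and each $\ell=0,\dots,m_i$ the degree-zero element $t^0 g(D)D$ contributes to the center of $\g_\pm^{[m_i]}$ through a linear combination involving $\eta_\ell(x,s_i)/\sinh(x/2)$ evaluated in $g$ and its derivatives at the points $-j+s_i$. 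The same disjointness-plus-interpolation argument then lets one prescribe these central contributions independently in each factor while keeping the non-central degree-zero components fixed, which completes surjectivity on the degree-zero piece and thus on all of $\g_{\pm}^{[\overline{m}]}$.
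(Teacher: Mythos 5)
Your overall plan — identify the kernel of the direct sum as the intersection of the kernels and reduce everything to simultaneous surjectivity, which you then get from the disjointness of the evaluation sets $(\pm s_i)+\tfrac{k}{2}+\ZZ$ together with holomorphic interpolation — is the right reading of the paper's one-line proof (``combining the Propositions \ref{prop D and gl}, \ref{prop D and c, d}, \ref{prop D and L} and \ref{prop extend fis}''), and your argument for the graded pieces of nonzero degree is correct: there the lifted maps $\widehat{\varphi}_{s_i}^{[m_i],\pm}$ coincide with $\varphi_{s_i}^{[m_i],\pm}$, so the interpolation argument of Propositions \ref{prop D and gl}--\ref{prop D and L} applies verbatim and the hypothesis $s_i\not\equiv\pm s_j\ (\mathrm{mod}\ \ZZ)$ is exactly what makes the evaluation sets pairwise disjoint.

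The step that fails is your treatment of the degree-zero piece. You propose to hit the extra central directions $u,\dots,u^{m_i}$ of $R_{m_i}$ by choosing degree-zero operators $g(D)D$ whose central contribution under $\widehat{\varphi}_{s_i}^{[m_i],\pm}$ is prescribed ``while keeping the non-central degree-zero components fixed.'' This is incompatible with the exactness you are proving: if the kernel of $\widehat{\varphi}_{s_i}^{[m_i],\pm}$ is to contain the degree-zero part of $J_{s_i}^{[m_i],\pm}$, then any $g(D)D$ whose matrix part vanishes (i.e. whose jets vanish at the $s_i$-evaluation points) must have vanishing central contribution as well, so you cannot adjust the central component independently of the matrix component by modifying $g$. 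Moreover, the central correction in \eqref{(extend fis)} is not, as you describe, a local expression in the derivatives of $g$ at the points $-j+s_i$; it is read off from a fixed generating series $\eta_i(x,s)/\sinh(x/2)$, so the bookkeeping you sketch does not go through. The correct (and standard) way to finish is to note that the image of $\widehat{\varphi}_{\overline{s}}^{[\overline{m}],\pm}$ is a graded Lie subalgebra that already contains $(\g_{\pm}^{[\overline{m}]})_k$ for every $k\neq 0$, and that $(\g_{\pm}^{[\overline{m}]})_0=\sum_{k\neq 0}[(\g_{\pm}^{[\overline{m}]})_k,(\g_{\pm}^{[\overline{m}]})_{-k}]$; for instance in $\gl$ the central element $u^a$ equals $\bigl[\sum_{j\in\ZZ}u^aE_{j,j+1},\sum_{k\in\ZZ}E_{k+1,k}\bigr]$, since the matrix part telescopes to zero while the cocycle \eqref{(15)} contributes $u^a$. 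This is the same fact (``$\g_{\pm}^{[\overline{m}]}$ coincides with its derived algebra'') that the paper invokes in the proof of Theorem \ref{teo 3.3}, and it makes surjectivity in degree zero, central directions included, automatic. You should also note that the kernel identification in degree zero requires knowing that the lifted maps (not just the maps of Propositions \ref{prop D and gl}--\ref{prop D and L}, which concern the non-extended algebras) kill the degree-zero part of $J_{s_i}^{[m_i],\pm}$; this is part of Proposition \ref{prop extend fis} and should be cited rather than re-derived by interpolation.
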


\section{Realization of Quasifinite Height Modules of $\widehat{\D}_x^{\ho,\pm}$}

Let $\g^{[m]}$ stand for $\gl$ or $c_{\infty}^{[m]}$ or $d_{\infty}^{[m]}$ or $\li^{[m]}_{\pm}$. The proof of the following proposition is standard.

\begin{prop} \label{prop 3.1}
The $\g^{[m]}$-module $L(\g^{[m]}, \lambda)$ is quasifinite if and only if all but finitely many of the $^* h_k^{(i)}$ are zero, where $*$ represent $a$ or $c$ or $d$ or $\pm$ depending $\g^{[m]}$ is $\gl$ or $c_{\infty}^{[m]}$ or $d_{\infty}^{[m]}$ or $\li^{[m]}_{\pm}$.
\end{prop}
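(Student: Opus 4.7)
The plan is to apply the general Kac--Radul type criterion of Theorem~\ref{lambda equivalencias} to each of the four algebras $\g^{[m]}\in\{\gl,c_\infty^{[m]},d_\infty^{[m]},\li^{[m]}_\pm\}$. I would first verify the conditions (P1), (P2), (P3). Commutativity of $(\g^{[m]})_0$ (P1) is immediate, since in every case this subspace consists of diagonal matrices with entries in $\Rm$ (possibly subject to a symmetry imposed by the defining anti-involution) together with the central $\Rm$. Properties (P2) and (P3) are established in the same spirit as Lemma~\ref{lema1}--Lemma~\ref{codiemnsion de P}: a nonzero homogeneous element of $(\g^{[m]})_{-k}$ has nontrivial bracket with some element of $(\g^{[m]})_1$, and for any $\ZZ$-graded subalgebra $\p$ with $\p_0=(\g^{[m]})_0$ and $\p_{-1}\neq 0$ the subspace $\p_{-j}$ has finite codimension in $(\g^{[m]})_{-j}$, obtained by iteratively bracketing a nonzero element of $\p_{-1}$ with elements of the form $f(u)E_{ii}$ and with elements of $\p_{-1}$ itself to propagate the support to every $(l+j,l)$-entry.

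By Theorem~\ref{lambda equivalencias}, $L(\g^{[m]},\lambda)$ is quasifinite if and only if there exists a nondegenerate $a\in(\g^{[m]})_{-1}$ with $\lambda([(\g^{[m]})_1,a])=0$. I would then translate this into a condition on the labels. In the type-$A$ case, writing $a=\sum_{k\in\ZZ}b_k(u)E_{k+1,k}$ with $b_k(u)=\sum_{j=0}^m b_{k,j}u^j\in\Rm$, a direct computation using the two-cocycle \eqref{(15)} gives, for $0\le i\le m$,
\[
\lambda\bigl([u^iE_{k,k+1},a]\bigr)=\sum_{j=0}^{m-i}b_{k,j}\,{}^a h_k^{(i+j)},
\]
by the very definition \eqref{(40)} of ${}^a h_k^{(n)}$. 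Hence $\lambda([(\gl)_1,a])=0$ amounts to the triangular linear system
\[
\sum_{j=0}^{m-i}b_{k,j}\,{}^a h_k^{(i+j)}=0\qquad(k\in\ZZ,\ 0\le i\le m).
\]
For $c_\infty^{[m]}$, $d_\infty^{[m]}$, $\li^{[m]}_\pm$ one obtains analogous systems, coupling the indices $k$ and $1-k$ (types $C$, $D$) or $k$ and $-k$ (types $\pm$) through the defining symmetry of each subalgebra, and with the corresponding ${}^c h$, ${}^d h$, ${}^\pm h$ replacing ${}^a h$.

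The remaining ingredient is the characterization of nondegenerate elements of $(\g^{[m]})_{-1}$. Bracketing $a$ with the $\g_0^{[m]}$-elements $f(u)E_{ii}$ shows that $\p^a_{-1}$ contains $\Rm\cdot b_k(u)E_{k+1,k}$ for each $k$ in the support of $a$; hence $a$ is nondegenerate precisely when $b_k(u)$ is a unit of $\Rm$ (i.e.\ $b_k(0)\neq 0$) for all but finitely many $k$ (modulo the paired indices in the $C$, $D$, $\pm$ cases). With this in hand, the equivalence follows. If $L(\g^{[m]},\lambda)$ is quasifinite, pick such a nondegenerate $a$; for every $k$ outside a finite set one has $b_{k,0}\neq 0$, and reading the triangular system from $i=m$ downward forces ${}^* h_k^{(m)}={}^* h_k^{(m-1)}=\cdots={}^* h_k^{(0)}=0$, so all but finitely many ${}^* h_k^{(i)}$ vanish. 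Conversely, if $F\subset\ZZ$ is the finite set outside of which all ${}^* h_k^{(i)}$ vanish, defining $a$ by $b_k(u)=1$ for $k\notin F$ (respecting the symmetry of the subalgebra) and $b_k(u)=0$ for $k\in F$ produces a nondegenerate $a$ with $\lambda([(\g^{[m]})_1,a])=0$, whence $L(\g^{[m]},\lambda)$ is quasifinite.

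I expect the main technical obstacle to lie in the verification of (P2)--(P3) for the subalgebras $c_\infty^{[m]}$, $d_\infty^{[m]}$, $\li^{[m]}_\pm$, where the involutive symmetry reduces the size of each graded piece and one must check that the bracketing arguments of Lemma~\ref{codiemnsion de P} still produce enough elements to cover every free index; once (P1)--(P3) are in place, the translation to the labels is the standard triangular-system argument of Kac--Radul.
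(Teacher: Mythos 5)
The paper gives no argument here: it simply declares the proof ``standard,'' deferring to the analogous statements in \cite{kr1} (Prop.~4.2--4.3) and \cite{kwy}. Your proposal is exactly that standard argument --- verify (P1)--(P3), invoke Theorem~\ref{lambda equivalencias}(b)$\Leftrightarrow$(c), compute $\lambda([u^iE_{k,k+1},a])=\sum_{j}b_{k,j}\,{}^*h_k^{(i+j)}$, characterize nondegenerate $a$ by $b_k(0)\neq 0$ off a finite set, and solve the triangular system --- and it is correct, including the correct handling (via ``all but finitely many'') of the exceptional paired/zero indices in the $c$, $d$ and $\li^{[m]}_{\pm}$ cases.
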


Let $\overline{m}=(m_1,\ldots,m_N)\in\ZZ_+^N$ and $\overline{s}=(s_1,\ldots,s_N)\in\CC^N$ be such that $s_i\in\ZZ$ implies $s_i=0$, $s_i\in\ZZ+\frac{1}{2}$ implies $s_i=\frac{1}{2}$ and $s_i \neq \pm s_j \ mod\ \ZZ$ for $i \neq j$, take a quasifinite $\lambda_i\in(\g^{[m_i]}_\pm)_0^*$ for each $i=1,\ldots,N$ and let $L(\g^{[m_i]}_\pm, \lambda_i)$ be the corresponding irreducible $\g^{[m_i]}_\pm$-module. Let $\overline{\lambda}=(\lambda_1,\ldots,\lambda_N)$. Then the tensor product
\begin{equation}
L(\g^{[\overline{m}]}_\pm,\overline{\lambda})=\bigotimes_{i=1}^N L(\g^{[m_i]}_\pm,\lambda_i),
\end{equation}
is a irreducible $\g^{[\overline{m}]}_\pm$-module, with $\g^{[\overline{m}]}_\pm=\bigoplus_{i=1}^N \g^{[m_i]}_\pm$ as in Proposition \ref{prop extend iso sum D en sum g}. The module $L(\g^{[\overline{m}]},\overline{\lambda})$ can be regarded as a $\widehat{\D}_x^{\pm}$-module via the homomorphism $\widehat\varphi_{\overline{s}}^{[\overline{m}],\pm}$, and will be denoted by $L_{\overline{s}}^{[\overline{m}]}(\overline{\lambda})$. We shall need the following proposition. Its proof is analogous to that of Proposition 4.3 \cite{kr1}.

\begin{prop} \label{prop 3.2}
Let $V$ a quasifinite $\widehat{\D}_x^{\pm}$-module. Then the action of $\widehat{\D}_x^{\pm}$ on $V$ naturally extends to the action of $(\widehat{\D}_x^{\ho,\pm})_k$ on $V$ for any $k \neq 0$.
\end{prop}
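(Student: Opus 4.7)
The plan is to follow the strategy of Proposition~4.3 in \cite{kr1}, adapted to the subalgebras $\widehat{\D}_x^{\pm}$. Fix $k\neq 0$. The graded piece $(\widehat{\D}_x^{\pm})_k$ is parametrized by polynomials $f$ of a prescribed parity (Lemma \ref{determino Dp+-}), while $(\widehat{\D}_x^{\ho,\pm})_k$ is parametrized by entire holomorphic functions of the same parity. Since $V$ is quasifinite, $V_j$ and $V_{j+k}$ are finite-dimensional for every $j$, so the evaluation map
\[ \Phi_{k,j}: (\widehat{\D}_x^{\pm})_k \longrightarrow \mathrm{Hom}_{\CC}(V_j,V_{j+k}), \qquad X\longmapsto \bigl(v\mapsto X\cdot v\bigr), \]
has finite-dimensional image and therefore kernel of finite codimension in the space of admissible polynomials $f$.

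The key intermediate step, which I expect to be the main obstacle, is to show that $\ker\Phi_{k,j}$ is carved out by the vanishing of $f$ and finitely many of its derivatives at a finite set of complex numbers $a_1,\dots,a_N$. To see this I would argue, using the explicit bracket formula in $\Dp$ together with Lemma \ref{lema1}, that the action of $t^k f(D+\tfrac{k}{2})D$ on $V_j$ factors through a finite-codimensional quotient of $\CC[x]$ of the correct parity; by the structure theorem for finite-codimensional ideals of $\CC[x]$, any such quotient is isomorphic to $\CC[x]/\prod_{r}(x-a_r)^{\mu_r+1}$, whose dual is spanned precisely by the functionals $f\mapsto f^{(i)}(a_r)$ with $0\leq i\leq\mu_r$. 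Granted this, for any entire $f$ of the appropriate parity I pick a polynomial $g$ of the same parity with $g^{(i)}(a_r)=f^{(i)}(a_r)$ for all admissible $(i,r)$—classical interpolation produces such a $g$, and the parity condition is enforced by averaging $\tfrac{1}{2}(g(x)\pm g(-x))$—and set
\[ (t^k f(D+\tfrac{k}{2})D)\cdot v := (t^k g(D+\tfrac{k}{2})D)\cdot v \qquad \text{for } v\in V_j. \]
Independence of the choice of $g$ follows because $f-g$ then lies in $\ker\Phi_{k,j}$; linearity in $f$ and compatibility with the original action on polynomial $f$ are immediate.

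Finally I would check that the extended assignment is compatible with all Lie brackets that remain within the scope of the extension, namely $[X,Y]v=XYv-YXv$ for $X\in(\widehat{\D}_x^{\ho,\pm})_k$ and $Y\in\widehat{\D}_x^{\pm}$, and more generally for $X\in(\widehat{\D}_x^{\ho,\pm})_k$, $Y\in(\widehat{\D}_x^{\ho,\pm})_{\ell}$ with $k+\ell\neq 0$. When tested against any fixed $v\in V_j$, each relation becomes a finite set of linear conditions in the relevant evaluation data of the entire functions involved; replacing each entire function by a polynomial matching it at the union of all the interpolation nodes needed for $\Phi_{k,j}$, $\Phi_{\ell,j+k}$, $\Phi_{\ell,j}$, $\Phi_{k,j+\ell}$ and $\Phi_{k+\ell,j}$, the identity reduces to the already established bracket identity inside $\widehat{\D}_x^{\pm}$, completing the proof.
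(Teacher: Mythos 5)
Your overall strategy is the one the paper intends: the text offers no argument of its own and simply declares the proof ``analogous to that of Proposition 4.3 of [KR1]'', and what you describe is a reconstruction of that argument adapted to the parity constraints of $\widehat{\D}_x^{\pm}$. There is, however, a genuine jump in your key intermediate step. You pass from ``$\ker\Phi_{k,j}$ has finite codimension'' to ``the action factors through $\CC[x]/\prod_r(x-a_r)^{\mu_r+1}$'' by invoking the structure theorem for finite-codimensional \emph{ideals}, but a finite-codimensional subspace of $\CC[x]$ need not be an ideal (e.g.\ $\{f:\ f(0)=f'(1)\}$), and nothing in your argument establishes that $\ker\Phi_{k,j}$ is one; Lemma \ref{lema1} is a degree count and does not supply this. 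The missing ingredient is stability of $\ker\Phi_{k,j}$ under the adjoint action of the weight-zero part: for $g$ even, $g(D)D$ preserves $V_j$, so if $t^kf(D+\frac{k}{2})D$ annihilates $V_j$ then so does $[g(D)D,\,t^kf(D+\frac{k}{2})D]=t^k\bigl(g(D+k)(D+k)-g(D)D\bigr)f(D+\frac{k}{2})D$. Hence $\ker\Phi_{k,j}$ is stable under multiplication by the polynomials $g(x+k)(x+k)-g(x)x$, which, written in the variable $w=x+\frac{k}{2}$, are even of every even degree and therefore span $\CC[w^2]$. Only then is $\ker\Phi_{k,j}$ a finite-codimension $\CC[w^2]$-submodule of the relevant parity component of $\CC[w]$, and only then is it cut out by the vanishing of finitely many derivatives at a finite set of points, necessarily symmetric under $w\mapsto -w$.

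Two smaller points. First, the symmetry of the node set matters for your parity trick: if you interpolate $f$ only at $\{a_r\}$ and then average $\frac{1}{2}(g(x)\pm g(-x))$, the averaging can destroy the interpolation conditions unless $g$ was also made to match $f$ at $\{-a_r\}$; since the node set produced by the submodule structure is automatically symmetric, you should interpolate on the full symmetric set from the start. Second, your restriction to $k+\ell\neq 0$ in the compatibility check is the right one (the cocycle and the weight-zero component are not covered by the extension), and with that restriction the reduction to the polynomial case by interpolating at the union of the relevant node sets is sound.
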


\begin{teor} \label{teo 3.3}
Let $V$ a quasifinite $\g^{[\overline{m}]}_\pm$-module, which is regarded as a $\widehat{\D}_x^{\pm}$-module via the homomorphism $\widehat
{\varphi}_{\overline{s}}^{[\overline{m}],\pm}$. Then any $\widehat{\D}_x^{\pm}$-submodule of $V$ is also a $\g^{[\overline{m}]}_\pm$-submodule. In particular, the $\widehat{\D}_x^{\pm}$-modules $L_{\overline{s}}^{[\overline{m}]}(\overline{\lambda})$ are irreducible if $\overline{s}=(s_1,\ldots,s_N)\in\CC^N$ is such that $s_i \in \ZZ$ implies $s_i=0, \ s_i \in \ZZ+\frac{1}{2}$ implies $s_i=\frac{1}{2}$, and $s_i\neq\pm s_j$ mod $\ZZ$ for $i \neq j$.
\end{teor}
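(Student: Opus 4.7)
The strategy is to show that every $\widehat{\D}_x^{\pm}$-submodule $W$ of $V$ is automatically a $\g^{[\overline{m}]}_{\pm}$-submodule; once this is established, the irreducibility of $L_{\overline{s}}^{[\overline{m}]}(\overline{\lambda})$ as a $\widehat{\D}_x^{\pm}$-module is immediate, because $L(\g^{[\overline{m}]}_{\pm},\overline{\lambda})$ is already irreducible as a $\g^{[\overline{m}]}_{\pm}$-module. Since the principal $\ZZ$-gradations on $\widehat{\D}_x^{\pm}$ and $\g^{[\overline{m}]}_{\pm}$ are compatible under $\widehat{\varphi}_{\overline{s}}^{[\overline{m}],\pm}$, the quasifiniteness of $V$ as a $\g^{[\overline{m}]}_{\pm}$-module transfers to the $\widehat{\D}_x^{\pm}$-gradation; in particular each $V_j$ is finite dimensional and $W=\bigoplus_j W_j$ with $W_j=W\cap V_j$.

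The key step is to upgrade stability of $W$ under $\widehat{\D}_x^{\pm}$ to stability under $\g^{[\overline{m}]}_{\pm}$ in nonzero grades. By Proposition \ref{prop 3.2}, the $\widehat{\D}_x^{\pm}$-action on $V$ extends to an action of $(\widehat{\D}_x^{\ho,\pm})_k$ for every $k\neq 0$. The extension is essentially by continuity: on the finite dimensional slice $V_j$, the operator $t^kf(D+k/2)D$ acts through finitely many values $f^{(i)}(-l+k/2+s)$, so the assignment $f\mapsto t^kf(D+k/2)D|_{V_j}$ factors through a finite dimensional quotient of $\ho^{(\overline{k})}$ (or $\ho^{(0)}$) and is continuous. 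The same argument, applied to $W_j\subseteq V_j$, shows that $W$ is preserved by $(\widehat{\D}_x^{\ho,\pm})_k$ as well. Combining this with Propositions \ref{prop extend iso sum D en sum g} and \ref{prop extend fis}, which give the surjectivity of the lifted homomorphism onto $\g^{[\overline{m}]}_{\pm}$, we conclude that $W$ is stable under $(\g^{[\overline{m}]}_{\pm})_k$ for every $k\neq 0$.

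To handle the zero graded piece, I will use that $(\g^{[\overline{m}]}_{\pm})_0$ is spanned by brackets of elements of nonzero opposite grades. For the non-central part this follows directly from the commutation relations in $\gl$, $c_{\infty}^{[m]}$, $d_{\infty}^{[m]}$, and $\li^{[m]}_{\pm}$: for instance in type $a$ every diagonal element $u^lE_{jj}-u^lE_{j+1,j+1}$ arises as $[u^lE_{j,j+1},E_{j+1,j}]$, with analogous identities for the other three types. Each central generator $u^j$ of $R_{m_i}$ then appears with nonzero coefficient in a suitably chosen commutator, on account of the nontriviality of the cocycle \eqref{(15)}. Since $W$ is already stable under the nonzero graded pieces, it is stable under all such commutators and hence under $(\g^{[\overline{m}]}_{\pm})_0$, finishing the proof that $W$ is a $\g^{[\overline{m}]}_{\pm}$-submodule. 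The irreducibility of $L_{\overline{s}}^{[\overline{m}]}(\overline{\lambda})$ follows at once.

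The main obstacle is the compatibility argument in the second paragraph: one has to verify that the extended action of $(\widehat{\D}_x^{\ho,\pm})_k$ on $V$ produced by Proposition \ref{prop 3.2} really coincides with the pullback along $\widehat{\varphi}_{\overline{s}}^{[\overline{m}],\pm}$ of the corresponding $(\g^{[\overline{m}]}_{\pm})_k$-action, and that this extension preserves the submodule $W$. This is analogous to the reasoning used in the proof of Proposition 4.3 of \cite{kr1} in type $a$; the finite-dimensional continuity argument goes through uniformly in types $c$, $d$, and $\pm$ once Proposition \ref{prop extend iso sum D en sum g} is in hand. After this compatibility is secured, the algebraic bookkeeping in the third paragraph is a short computation inside the factors of $\g^{[\overline{m}]}_{\pm}$.
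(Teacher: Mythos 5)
Your proof follows essentially the same route as the paper's: extend the $\widehat{\D}_x^{\pm}$-action on the submodule to $(\widehat{\D}_x^{\ho,\pm})_k$ for $k\neq 0$ via Proposition \ref{prop 3.2}, use the surjectivity of $\widehat{\varphi}_{\overline{s}}^{[\overline{m}],\pm}$ in nonzero degrees (Proposition \ref{prop extend iso sum D en sum g}) to get stability under $(\g^{[\overline{m}]}_{\pm})_k$ for $k\neq 0$, and finish by observing that the degree-zero part is generated by brackets of nonzero-degree elements, which is exactly the paper's appeal to $\g^{[\overline{m}]}_{\pm}$ coinciding with its derived algebra. Your version merely spells out the continuity and derived-algebra steps that the paper leaves implicit; the argument is correct.
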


\begin{proof}
Let $U$ be a $\widehat{\D}_x^{\pm}$-submodule of $V$, then U is a quasifinite $\widehat{\D}_x^{\pm}$-module as well, hence by Proposition \ref{prop 3.2} it can be extended to $(\widehat{\D}_x^{\ho,\pm})_k$ for any $k \neq 0$. By Proposition \ref{prop extend iso sum D en sum g}, the map $\widehat{\varphi}_{\overline{s}}^{[\overline{m}], \, \pm}:(\widehat{\D}_x^{\ho,\pm})_k \rightarrow (\mathfrak{g}_{\pm}^{[\overline{m}]})_k$ is surjective for any $k \neq 0$. Therefore $U$ is invariant with respect to all graded subspaces  $(\g^{[\overline{m}]}_\pm)_k$ ($k\neq 0$) of $\g^{[\overline{m}]}_\pm$. Using that $\g^{[\overline{m}]}_\pm$ coincides with its derived algebra, we finish the proof.
\end{proof}

\medskip

Given an irreducible highest weight $\widehat{\D}_x^{\pm}$-module $L(\widehat{\D}_x^{\pm},\lambda)$, using Theorem \ref{caractrizacion de MWH}, we have that it is quasifinite if and only if 
\begin{equation}\label{aaaaa}
    \Delta_\lambda(x)=\frac{d}{dx}
    \left(\frac{\phi_{\lambda}(x)}{2\sinh\left(\frac{x}{2}\right)}\right)
\end{equation}
where $\phi_{\lambda}(x)$ is an even quasipolynomial such that $\phi_{\lambda}(0)=0$.

 On the other hand, observe that a functional $\lambda\in(\widehat{\D}_x^{\pm})_0^*$ is also characterized by  $\Gamma_l=-\lambda(D^{l+1})$, where $l\in\ZZ_+^0$, and the central charge $\lambda(C)=c_0$, cf. \eqref{(31)}. Consider the new generating series:
\begin{equation}
\Gamma_{\lambda}(x)=\sum_{l\in\ZZ_+^0}\frac{x^{l+1}}{(l+1)!}\Gamma_l=-\lambda(\sinh(xD)), \label{(43)}
\end{equation}
observe que $\Gamma_{\lambda}(x)$ satisfies \eqref{(12)}, then using \eqref{(36)} we obtain
\begin{equation*}
\Gamma_{\lambda}(x)=\frac{\phi_{\lambda}(x)}{2\sinh\left(\frac{x}{2}\right)}.
\end{equation*}

We will show that in fact all the quasifinite $\widehat{\D}_x^{\pm}$-module can be realized as some $L_{\overline{s}}^{[\overline{m}]}(\g^{[\overline{m}]}_\pm,\lambda)$, and this is done by the study of  exponents and multiplicities  using the computation of the generating series $\Gamma_{m,s,\lambda}(x)$ of the highest weight  $\widehat{\D}_x^{\ho,\pm}$-module $L_s^{[m]}(\g^{[m]}_\pm,\lambda)$.

\begin{prop} \label{prop 3.4}
For $s \in (\CC-\ZZ/2)$, consider the embedding $\widehat{\varphi}_{{s}}^{[{m}]}:\widehat{\D}_x^{\pm} \rightarrow \gl$. Then the $\gl$-module $L(\gl,\lambda)$ regarded as a $\widehat{\D}_x^{ \pm}$-module via $\widehat{\varphi}_{{s}}^{[{m}]}$ is isomorphic to $L(\widehat{\D}_x^{ \pm};e^+,e^-)$ where $e^+, \ e^-$ consist of exponents $(s-j-\frac{1}{2})$ with $j \in \ZZ$ and multiplicities
\begin{equation*}
\sum_{\substack{0\leqslant i \leqslant m, \\ i \ \text{even}}} \frac{^ah_j^{(i)}x^i}{i!} \ \ \ \text{and} \ \ \
\sum_{\substack{0 \leqslant i \leqslant m, \\ i \ \text{odd}}} \frac{^ah_j^{(i)}x^i}{i!},
\end{equation*}
respectively.
\end{prop}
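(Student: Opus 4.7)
The plan proceeds in three steps: first verify that $L(\gl,\lambda)$, regarded as a $\widehat{\D}_x^{\pm}$-module via $\widehat{\varphi}_s^{[m],\pm}$, is an irreducible highest weight module; next compute its generating series $\Gamma_{m,s,\lambda}(x)=-\lambda(\widehat{\varphi}_s^{[m],\pm}(\sinh(xD)))$ explicitly in terms of the labels ${}^a\lambda_j^{(i)}$ and central charges $c_i$; and finally read off the exponents and multiplicities by matching with the decomposition \eqref{(29)}. Since $p(x)=x$, the condition $p(x)=(-1)^np(-x+c)$ forces $c=0$, so the central-charge summand in \eqref{(29)} is $\cosh(x/2)c_0$. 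Irreducibility is immediate from Theorem \ref{teo 3.3} applied with $N=1$ and $s_1=s\notin\ZZ/2$.

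The heart of the argument is the explicit computation. Applying $\lambda$ to formula \eqref{(extend fis)} and using $\lambda(u^iE_{jj})={}^a\lambda_j^{(i)}$, $\lambda(u^i)=c_i$, together with the fact that the central element of $\widehat{\D}_x^{\pm}$ maps to $\mathbf{1}\in R_m$ and thus acts as $c_0$, one performs Abel summation on $j\in\ZZ$. This is justified because quasifiniteness of $L(\gl,\lambda)$, by Proposition \ref{prop 3.1}, forces all but finitely many ${}^ah_j^{(i)}$ to vanish, so the rearranged sum reduces to a finite one. Using the identity ${}^ah_j^{(i)}={}^a\lambda_j^{(i)}-{}^a\lambda_{j+1}^{(i)}+\delta_{j,0}c_i$, the telescoping gives
\[
\sum_{j\in\ZZ}\bigl(\eta_i(x,s-j+1)-\eta_i(x,s-j)\bigr){}^a\lambda_j^{(i)}=-\sum_{j\in\ZZ}\eta_i(x,s-j)\,{}^ah_j^{(i)}+\eta_i(x,s)c_i,
\]
and the boundary term $\eta_i(x,s)c_i$ cancels exactly against the explicit $\sum_i\eta_i(x,s)c_i$ contribution in \eqref{(extend fis)}. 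Combined with $\phi_\lambda(x)=2\sinh(x/2)\Gamma_{m,s,\lambda}(x)$ from \eqref{(36)}, this produces
\[
\phi_\lambda(x)+\cosh(x/2)c_0=\sum_{j\in\ZZ}\sum_{i=0}^m \eta_i(x,s-j)\,{}^ah_j^{(i)}.
\]

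To conclude, split by parity of $i$ using the explicit form of $\eta_i$: for $i$ even, $\eta_i(x,s-j)=\cosh((s-j-\tfrac{1}{2})x)\,x^i/i!$, and for $i$ odd, $\eta_i(x,s-j)=\sinh((s-j-\tfrac{1}{2})x)\,x^i/i!$. Regrouping by $j$ gives precisely the decomposition \eqref{(29)} with even-type exponents $s-j-\tfrac{1}{2}$ of multiplicity $\sum_{i\text{ even}}{}^ah_j^{(i)}x^i/i!$ and odd-type exponents $s-j-\tfrac{1}{2}$ of multiplicity $\sum_{i\text{ odd}}{}^ah_j^{(i)}x^i/i!$. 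The hypothesis $s\notin\ZZ/2$ guarantees that all these exponents are distinct and nonzero, so the pair $(e^+,e^-)$ is unambiguously determined, and Theorem \ref{caractrizacion de MWH} identifies the module as $L(\widehat{\D}_x^{\pm};e^+,e^-)$.

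The main obstacle is the bookkeeping of the central-charge corrections through the Abel summation: the $\delta_{j,0}c_i$ term in ${}^ah_j^{(i)}$ is essential for cancelling the explicit $\eta_i(x,s)c_i$ contributions of \eqref{(extend fis)}, leaving precisely the $\cosh(x/2)c_0$ summand demanded by \eqref{(29)}. One should also note that formula \eqref{(extend fis)} applies uniformly for both signs $\pm$ on $\sinh(xD)$, since $\sinh(xD)\in(\widehat{\D}_x^{\pm})_0$ and $(\widehat{\D}_x^+)_0=(\widehat{\D}_x^-)_0$ as observed in the proof of Proposition \ref{prop extend fis}; hence no separate argument is needed for the $+$ and $-$ cases.
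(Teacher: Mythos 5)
Your proof is correct and follows essentially the same route as the paper: establish irreducibility and quasifiniteness via Proposition \ref{prop 3.1} and Theorem \ref{teo 3.3}, compute $\Gamma_{m,s,\lambda}(x)=-\lambda(\widehat{\varphi}_s^{[m]}(\sinh(xD)))$ from \eqref{(extend fis)}, and read off the exponents $s-j-\tfrac{1}{2}$ and multiplicities from the parity of $i$ in $\eta_i(x,s-j)$. The only difference is that you write out the Abel-summation/telescoping step and the cancellation of the $\eta_i(x,s)c_i$ boundary terms explicitly, which the paper leaves implicit in the phrase ``using the formulas \eqref{(43)}, \eqref{(extend fis)} and \eqref{(40)}''; your bookkeeping reproduces exactly the displayed formula in the paper's proof.
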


\begin{proof} By Proposition \ref{prop 3.1} and Theorem \ref{teo 3.3}, the $\widehat{\D}_x^{ \pm}$-module $L(\gl,\lambda)$ is an irreducible quasifinite highest weight module. Using (\ref{(extend fis)}), the central charge $c=c_0$.  
 Using the explicit expression of the homomorphism $\widehat{\varphi}_s^{[m]}:\widehat{\D}_x^{\pm} \rightarrow \gl$ given in Proposition \ref{prop extend fis}), and the formulas \eqref{(43)}, \eqref{(extend fis)} and \eqref{(40)} we have that
\begin{eqnarray*}
\Gamma_{m,s,\lambda}(x) & = & -\lambda(\widehat{\varphi}_s^{[m]}(\sinh(xD))) \\
& = & \frac{1}{2}\sum_{i=0}^m \sum_{j\in\ZZ} \frac{\eta_i(x,s-j)}{\sinh(x/2)} \, ^ah_j^{(i)} -\frac{1}{2}\frac{\cosh(x/2)}{\sinh(x/2)}c_0.
\end{eqnarray*}
Now  the proposition follows from the definition of exponents and their multiplicities.
\end{proof}

\begin{prop} \label{prop 3.5} 
Consider $\g_+^{[m]}=c_{\infty}^{[m]}$,  $\g_-^{[m]}=d_{\infty}^{[m]}$ and  the embedding $\widehat{\varphi}_{\frac{1}{2}}^{[{m}], \, \pm}:\widehat{\D}_x^{\pm} \rightarrow \g_{\pm}^{[m]}$. 
Then the $\g_{\pm}^{[m]}$-module $L(\g_{\pm}^{[m]},\lambda)$ regarded as a $\widehat{\D}_x^{ \pm}$-module via $\widehat{\varphi}_{\frac{1}{2}}^{[{m}], \, \pm} $ is isomorphic to $L(\widehat{\D}_x^{ \pm}; e^+,e^-)$ where $e^+, \ e^-$ consist of exponents $j \in \ZZ_+$ and multiplicities
\begin{equation*}
\sum_{\substack{0\leqslant i \leqslant m, \\ i \ \text{even}}} \frac{^gh_j^{(i)}x^i}{i!} \ \ \ \text{and} \ \ \
\sum_{\substack{0 \leqslant i \leqslant m, \\ i \ \text{odd}}} \frac{^gh_j^{(i)}x^i}{i!},
\end{equation*}
respectively, where $^gh_j^{(i)}=0$ for $i$ odd and $g$ represents $c$ or $d$ depending of $\g_{\pm}^{[m]}$.
\end{prop}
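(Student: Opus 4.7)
The plan is to follow the strategy of Proposition \ref{prop 3.4}, adapted to the smaller Cartan of $c_\infty^{[m]}$ and $d_\infty^{[m]}$. First, by Proposition \ref{prop 3.1} combined with Theorem \ref{teo 3.3}, $L(\g_\pm^{[m]},\lambda)$ regarded as a $\widehat{\D}_x^{\pm}$-module is irreducible quasifinite; its data $(e^+, e^-)$ can therefore be read off from the generating series
\[
\Gamma_{m,\frac{1}{2},\lambda}(x)=-\lambda\!\left(\widehat{\varphi}_{\frac{1}{2}}^{[m],\pm}(\sinh(xD))\right),
\]
which I would evaluate via the explicit formula \eqref{(extend fis)} of Proposition \ref{prop extend fis} specialized at $s=\tfrac{1}{2}$.

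The main step is the application of $\lambda$. The diagonal pieces $u^i E_{jj}$ in \eqref{(extend fis)} are not themselves labels of $\g_\pm^{[m]}$; only the symmetric combinations $u^i E_{jj}-(-u)^i E_{1-j,1-j}$, with eigenvalue $^g\lambda_j^{(i)}$, do. Using the identity $\eta_i(x,-s)=(-1)^i\eta_i(x,s+1)$, one checks that the coefficients $\alpha_{i,j}(x):=(\eta_i(x,\tfrac{3}{2}-j)-\eta_i(x,\tfrac{1}{2}-j))/(2\sinh(x/2))$ satisfy $\alpha_{i,1-j}(x)=(-1)^{i+1}\alpha_{i,j}(x)$, which is exactly the symmetry needed for the image of $\sinh(xD)$ to lie in $\g_\pm^{[m]}$. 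Pairing the summands for $j\geq 1$ with those for $1-j\leq 0$ then reduces $\lambda$ of the non-central part to $\sum_{i,\,j\geq 1}\alpha_{i,j}(x)\,^g\lambda_j^{(i)}$.

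Rewriting $\alpha_{i,j}(x)=(-1)^i[\eta_i(x,j-\tfrac{1}{2})-\eta_i(x,j+\tfrac{1}{2})]/(2\sinh(x/2))$, an index shift and telescoping produce $\sum_{j\geq 1}\eta_i(x,j+\tfrac{1}{2})\,^g h_j^{(i)}$ plus a boundary term at $j=1$ proportional to $\eta_i(x,\tfrac{1}{2})\,{}^g\lambda_1^{(i)}$. For $i$ even, this boundary term combines with the central-charge contribution $\eta_i(x,\tfrac{1}{2})c_i$ coming from \eqref{(extend fis)} via the relation $^g h_0^{(i)}={}^g\lambda_1^{(i)}+c_i$ to supply the missing $j=0$ summand; for $i$ odd, $\eta_i(x,\tfrac{1}{2})=0$ kills the boundary term automatically. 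Multiplying through by $2\sinh(x/2)$, using $\eta_i(x,j+\tfrac{1}{2})=\tfrac{x^i}{i!}\cosh(jx)$ for even $i$ and $\tfrac{x^i}{i!}\sinh(jx)$ for odd $i$, and comparing with the decomposition \eqref{(29)} of $\phi_\lambda(x)+\cosh(\tfrac{x}{2})c_0$ identifies the exponents as $j\in\ZZ_+$ with the claimed even- and odd-type multiplicities. The final assertion $^g h_j^{(i)}=0$ for odd $i$ then follows from the identity $^g\lambda_{1-j}^{(i)}={}^g\lambda_j^{(i)}$ that holds for odd $i$ by the very definition of the diagonal generators of $c_\infty^{[m]}$ and $d_\infty^{[m]}$, which makes the odd-type multiplicities vanish identically.

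The step I expect to be the most delicate is the careful sign bookkeeping dictated by $\alpha_{i,1-j}(x)=(-1)^{i+1}\alpha_{i,j}(x)$: this parity factor must be propagated through the telescoping and through the match with \eqref{(29)}, so that the even case correctly absorbs $\eta_i(x,\tfrac{1}{2})\,{}^g\lambda_1^{(i)}$ into the $j=0$ contribution via $^g h_0^{(i)}$, while the odd case yields the $\sinh$-expansion with the asserted vanishing coefficients.
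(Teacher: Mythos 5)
Your overall route is the same as the paper's: the published proof consists exactly of evaluating $\Gamma_{m,\frac{1}{2},\lambda}(x)=-\lambda\bigl(\widehat{\varphi}_{\frac{1}{2}}^{[m],\pm}(\sinh(xD))\bigr)$ from \eqref{(extend fis)}, folding the sum over $j\in\ZZ$ into the label combinations of \eqref{(41)} by means of the symmetry \eqref{(42)}, and telescoping; you have merely written out the intermediate steps (the relation $\alpha_{i,1-j}=(-1)^{i+1}\alpha_{i,j}$, the pairing $j\leftrightarrow 1-j$, the Abel summation) that the paper suppresses, and those steps are sound. On the point you yourself flag as delicate: carrying the telescoping through, the boundary term enters $\Gamma$ as $-\eta_i(x,\frac{1}{2})\,{}^g\lambda_1^{(i)}/(2\sinh(x/2))$, so it combines with the central contribution to give $\eta_i(x,\frac{1}{2})\,(c_i-{}^g\lambda_1^{(i)})$ rather than $\eta_i(x,\frac{1}{2})\,({}^g\lambda_1^{(i)}+c_i)$; the normalization $\sum_i q_i(0)=c_0$ following \eqref{(29)} (equivalently $\phi_\lambda(0)=0$ in Theorem \ref{caractrizacion de MWH}), together with $\sum_{j\geqslant 1}{}^gh_j^{(0)}={}^g\lambda_1^{(0)}$, confirms the minus sign. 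This discrepancy is present in the paper's displayed formula as well and traces back to \eqref{(41)}, so it is a convention to be repaired rather than a defect of your argument alone.

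The genuine gap is your justification of the closing assertion ${}^gh_j^{(i)}=0$ for odd $i$. The identity ${}^g\lambda_{1-j}^{(i)}={}^g\lambda_j^{(i)}$ for odd $i$ is true, but it only records that $u^iE_{jj}-(-u)^iE_{1-j,1-j}$ and $u^iE_{1-j,1-j}-(-u)^iE_{jj}$ are the same Cartan element when $i$ is odd; it relates the label at $j$ to the label at $1-j$, not to the label at $j+1$, and hence says nothing about ${}^gh_j^{(i)}={}^g\lambda_j^{(i)}-{}^g\lambda_{j+1}^{(i)}$ for $j\geqslant 1$. The elements $u^i(E_{jj}+E_{1-j,1-j})$ with $j\geqslant 1$ and $i$ odd do lie in the degree-zero part of $\overline{c}_{\infty}^{[m]}$ and $\overline{d}_{\infty}^{[m]}$, and Proposition \ref{prop 3.1} places no constraint on the values of $\lambda$ on them beyond eventual vanishing, so these labels are free parameters and their consecutive differences need not vanish. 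What is forced is only the $j=0$ case, where \eqref{(41)} simply does not define ${}^gh_0^{(i)}$ for odd $i$ (consistently, $\eta_i(x,\frac{1}{2})=0$ for odd $i$ kills that boundary term, as you note). For $j\geqslant 1$ the odd-$i$ terms survive the telescoping as honest $\frac{x^i}{i!}\sinh(jx)$ contributions, so the assertion either has to be read as a restriction on the highest weights $\lambda$ under consideration or needs an argument that neither your proof nor the paper's supplies.
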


\begin{proof} We will only need to compute $\Gamma_{m,s,\lambda}(x)$. The rest of the statement is clear, cf. the proof of Proposition \ref{prop 3.4}. 
Recall Remark \ref{obse1} \textit{(a)} and consider  the explicit computation of the homomorphism $\widehat{\varphi}_{\frac{1}{2}}^{[m], \, \pm}:\widehat{\D}_x^{\pm} \rightarrow \g_{\pm}^{[m]}$  given in Proposition \ref{prop extend fis}. Using \eqref{(43)}, \eqref{(42)}, \eqref{(extend fis)} and \eqref{(41)} we have that
\begin{eqnarray*}
\Gamma_{m,s,\lambda}(x)&=&-\lambda(\widehat{\varphi}_{\frac{1}{2}}^{[m], \, \pm}(\sinh(xD))) \\
&=& \frac{1}{2}\sum_{i=0}^m \sum_{j>0} \frac{\eta_i(x,j+1/2)}{\sinh(x/2)} \, ^gh_j^{(i)} \\
& &+\frac{1}{2}\sum_{\substack{0 \leqslant i \leqslant m \\ i \ \text{even}}} \frac{\eta_i(x,1/2)}{\sinh(x/2)} \, ^gh_0^{(i)}-\frac{1}{2}\frac{\cosh(x/2)}{\sinh(x/2)}c_0,
\end{eqnarray*}
which proves the proposition.
\end{proof}

\begin{prop} \label{prop 3.6}
Consider the embedding $\widehat{\varphi}_{0}^{[{m}], \, \pm}:\widehat{\D}_x^{\pm} \rightarrow \li^{[m]}_{\pm}$. 
Then  
the $\li^{[m]}_{\pm}$-module $L(\li^{[m]}_{\pm},\lambda)$ regarded as a $\widehat{\D}_x^{ \pm}$-module via $\widehat{\varphi}_{0}^{[{m}], \, \pm}$ is isomorphic to $L(\widehat{\D}_x^{ \pm}; e^+,e^-)$ where $e^+, \ e^-$ consist of exponents $-j-\frac{1}{2}$ with $j\in\ZZ_+$ and multiplicities
\begin{equation*}
\sum_{\substack{0\leqslant i \leqslant m, \\ i \ \text{even}}} \frac{^{\pm}h_j^{(i)}x^i}{i!} \ \ \ \text{and} \ \ \
\sum_{\substack{0 \leqslant i \leqslant m, \\ i \ \text{odd}}} \frac{^{\pm}h_j^{(i)}x^i}{i!}.
\end{equation*}
respectively.
\end{prop}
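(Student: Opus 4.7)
The proof will run in close parallel to the proofs of Propositions \ref{prop 3.4} and \ref{prop 3.5}, the only substantive difference being the bookkeeping forced by the specific symmetry of $\overline{\li}^{[m]}_{\pm}$ (pairing of $E_{j,j}$ with $E_{-j,-j}$). First I would dispose of the easy half of the statement: by Proposition \ref{prop 3.1} applied to $\g^{[m]}=\li^{[m]}_{\pm}$, the module $L(\li^{[m]}_\pm,\lambda)$ is quasifinite as a $\li^{[m]}_\pm$-module, and Theorem \ref{teo 3.3} then guarantees it remains irreducible when pulled back along $\widehat{\varphi}_0^{[m],\pm}$; hence it is an irreducible quasifinite highest weight $\widehat{\D}_x^{\pm}$-module. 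From $\fiss(c_0)=1$ one reads off that the central charge of this $\widehat{\D}_x^\pm$-module equals $c_0=\lambda(u^0)$. Thus the real content lies in identifying the exponents and their multiplicities, and for this it suffices, as in Propositions \ref{prop 3.4} and \ref{prop 3.5}, to compute the generating series $\Gamma_{m,0,\lambda}(x)=-\lambda(\widehat{\varphi}_0^{[m],\pm}(\sinh(xD)))$ and compare with \eqref{(29)} via \eqref{(36)}.

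For that computation I would substitute $s=0$ into the explicit formula \eqref{(extend fis)} for $\widehat{\varphi}_0^{[m],\pm}(\sinh(xD))$, then split the sum over $j\in\ZZ$ into the three pieces $j>0$, $j=0$, $j<0$. In the $j<0$ piece I would re-index $j\mapsto -j$ and apply the reflection identity \eqref{(42)}, $\eta_i(x,-s)=(-1)^i\eta_i(x,s+1)$, to rewrite the coefficient of $u^iE_{-j,-j}$ in terms of $\eta_i(x,j),\eta_i(x,j+1)$. A direct check then shows that the paired contributions combine into the combination $u^iE_{j,j}-(-u)^iE_{-j,-j}$, on which by \eqref{(44)} the functional $\lambda$ evaluates to ${}^{\pm}\lambda_j^{(i)}$. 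The $j=0$ piece is taken care of by the fact that $\eta_i(x,0)-\eta_i(x,1)$ vanishes for $i$ even and equals $-2\sinh(x/2)\,x^i/i!$ for $i$ odd, which, combined with the $u^ic_0$ correction term in \eqref{(extend fis)} and the relation ${}^{\pm}h_0^{(i)}={}^{\pm}\lambda_0^{(i)}+\delta_{i \text{ odd}}c_i$, packages the $j=0$ contribution into the same shape as the $j>0$ terms.

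At this stage the expression will still involve the labels ${}^{\pm}\lambda_j^{(i)}$ rather than ${}^{\pm}h_j^{(i)}$, so the next step is a summation by parts in $j$: writing $\eta_i(x,j)-\eta_i(x,j+1)$ as a difference lets me transfer the difference onto $\lambda$, producing exactly the combination ${}^{\pm}\lambda_j^{(i)}-{}^{\pm}\lambda_{j+1}^{(i)}+\delta_{j,0}c_i={}^{\pm}h_j^{(i)}$. Finally, using that for $i$ even $\eta_i(x,j+1)/\sinh(x/2)$ is proportional to $\cosh((j+\tfrac12)x)/\sinh(x/2)$ and for $i$ odd to $\sinh((j+\tfrac12)x)/\sinh(x/2)$, together with the relation $\phi_\lambda(x)=2\sinh(x/2)\Gamma_\lambda(x)$ from \eqref{(36)} and the $c_0\cosh(\tfrac{c+1}{2}x)=c_0\cosh(x/2)$ contribution (here $c=0$), I read off that $\phi_\lambda(x)+c_0\cosh(x/2)$ has the decomposition \eqref{(29)} with exponents $-(j+\tfrac12)$ for $j\in\ZZ_+$ and multiplicities $\sum_{i\text{ even}}{}^{\pm}h_j^{(i)}x^i/i!$ and $\sum_{i\text{ odd}}{}^{\pm}h_j^{(i)}x^i/i!$ respectively.

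The main technical obstacle is not any deep step but the careful pairing of positive and negative indices and the separate treatment of $j=0$: one must verify that the reflection identity \eqref{(42)} matches exactly the sign pattern $(-u)^iE_{-j,-j}$ appearing in the definition of $\overline{\li}^{[m]}_\pm$, and that the seemingly anomalous $u^ic_0$ and $\cosh(x/2)c_0$ terms in \eqref{(extend fis)} are precisely what is needed to absorb the Kronecker correction $\delta_{i,0}c_j$ built into ${}^{\pm}h_j^{(i)}$ and to produce the standard central-charge contribution in \eqref{(29)}. Once these book-keeping verifications are in place, the statement follows by direct identification of coefficients.
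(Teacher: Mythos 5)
Your proposal follows the same route as the paper: the paper's proof of Proposition \ref{prop 3.6} consists precisely of computing $\Gamma_{m,0,\lambda}(x)=-\lambda\bigl(\widehat{\varphi}_0^{[m],\pm}(\sinh(xD))\bigr)$ from \eqref{(extend fis)}, \eqref{(42)}, \eqref{(43)} and \eqref{(44)}, arriving at $\tfrac{1}{2}\sum_{i}\sum_{j\geqslant 0}\eta_i(x,-j)\,{}^{\pm}h_j^{(i)}/\sinh(x/2)-\tfrac{1}{2}c_0\cosh(x/2)/\sinh(x/2)$ and reading off the exponents $-j-\tfrac12$ and their multiplicities, so your pairing of the indices $\pm j$ via \eqref{(42)}, the separate treatment of $j=0$, and the summation by parts are exactly the bookkeeping the paper leaves implicit, and they do produce the paper's formula. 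One minor slip: the relation you quote for the $j=0$ term should be ${}^{\pm}h_0^{(i)}={}^{\pm}\lambda_0^{(i)}-{}^{\pm}\lambda_1^{(i)}+c_i$ as in \eqref{(44)} (with ${}^{\pm}\lambda_0^{(i)}=0$ for $i$ even), not ${}^{\pm}\lambda_0^{(i)}+\delta_{i\ \text{odd}}\,c_i$; since your subsequent summation by parts uses the correct definition of ${}^{\pm}h_j^{(i)}$, this does not affect the validity of the argument.
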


\begin{proof}
Recall Remark \ref{obse1} \textit{(a)} and consider the explicit computation of the homomorphism $\widehat{\varphi}_0^{[m], \, \pm}:\widehat{\D}_x^{\pm} \rightarrow \li^{[m]}_{\pm}$ obtained in Proposition \ref{prop extend fis}. Using \eqref{(43)}, \eqref{(42)}, \eqref{(extend fis)} and \eqref{(44)} we have that
\begin{eqnarray*}
\Gamma_{m,s,\lambda}(x)&=&-\lambda(\widehat{\varphi}_0^{[m], \, \pm}(\sinh(xD))) \\
&=& \frac{1}{2}\sum_{i=0}^m \sum_{j\geqslant0} \frac{\eta_i(x,-j)}{\sinh(x/2)} \, ^{\pm}h_j^{(i)}-\frac{1}{2}\frac{\cosh(x/2)}{\sinh(x/2)}c_0,
\end{eqnarray*}
which proves the proposition.
\end{proof}

Take an irreducible quasifinite highest weight $\widehat{\D}_x^{\pm}$-module $L(\widehat{\D}_x^{\pm},\lambda)$ with central charge $c_0$ and
\begin{equation*}
\Gamma_\lambda(x)=\frac{\phi_{\lambda}(x)}{2\sinh(x/2)},
\end{equation*}
where $\phi_{\lambda}(x)$ an even quasipolynomial with $\phi_{\lambda}(0)=0$. We will write
\begin{equation}
\phi_{\lambda}(x)+\cosh(x/2)c_0=\sum_{s\in\CC}\sum_{i=1}^{m_s} a_{s,i} \, \eta_i(x,s), \label{(34)}
\end{equation}
where $a_{s,i}\in\CC$ and $a_{s,i} \neq 0$ for only finitely many $s\in\CC$. Since, by definition of $\eta_i$, we have that $\eta_i(x,-s)=(-1)^i\eta_i(x,s+1)$, to avoid ambiguities in the expression of $\phi_{\lambda}(x)$ above, we will choose the parameter $s$ following these rules: when $s\in\ZZ$ we require $s\leqslant0$; when $s\in\ZZ+\frac{1}{2}$, we ask $s\leqslant\frac{1}{2}$, when $s\notin\ZZ/2$, we require that $\mathrm{Im}\, s>0$ if $\mathrm{Im}\, s\neq0$ or $s-[s]<\frac{1}{2}$ if $s\in\RR$, where $\mathrm{Im}\, s$ is the imaginary part of $s$, and $[s]$ denotes the biggest integer smaller than $s$ respectively.

Decompose the set $\{s\in\CC|a_{s,i}\neq0 \ \text{for some} \ i\}$ into a disjoint union of equivalence classes under the equivalence relation $s\thicksim s'$ if and only if $s=\pm s' \ (mod\,\ZZ)$. Pick a representative $s$ in an equivalence class $S$ such that $s=0$ if the equivalence class is in $\ZZ$ and $s=\frac{1}{2}$ if the equivalence class is in $\ZZ+\frac{1}{2}$. Let $S=\{ s, \, s-k_1, \, s-k_2,\ldots \}$ be such an equivalence class and take $m=max_{s\in S}\, m_s$. Put $k_0=0$. It is easy to see that if $s=0$ or $\frac{1}{2}$, then $k_i\in\NN$.

We associate to $S$ the $\g^{[m]}_\pm$-module $L_S^{[m]}(\g^{[m]}_\pm,\lambda_S)$ in the following way: if $s\notin\ZZ/2$, let $^ah_{k_r}^{(i)}=a_{s+k_r,i}$ with $i=0,\ldots,m_s$ and $r=0,1,2,\ldots$. We associate to $S$ the $\gl$-modules $L_S^{[m]}(\gl,\lambda_S)$ with central charges and labels
\begin{equation*}
c_i=\sum_{k_r} {^ah_{k_r}^{(i)}}, \ \ \ ^a\lambda_j^{(i)}=\sum_{k_r\geqslant j} (^ah_{k_r}^{(i)}-\delta_{k_r,0}\, \ c_i).
\end{equation*}
If $s=\frac{1}{2}$, let $^gh_{k_r}^{(i)}=a_{\frac{1}{2}+k_r,i}$, with $i=0,\ldots,m_\frac{1}{2}$ and $r=0,\, 1, \,2,\ldots$. We associate to $S$ the $\g_{\pm}^{[m]}$-module $L_S^{[m]}(\g_{\pm}^{[m]},\lambda_S)$ with central charges and labels
\begin{equation*}
c_i=\sum_{k_r} {^gh_{k_r}^{(i)}} \  \text{($i$ even),} \ c_i=0 \ \ \text{($i$ odd),} \ \ ^g\lambda_j^{(i)}=\sum_{k_r\geqslant j} {^gh_{k_r}^{(i)}},
\end{equation*}
where $\g_+^{[m]}=c_{\infty}^{[m]}, \ \g_-^{[m]}=d_{\infty}^{[m]}$ and $g$ represents $c$ or $d$ depending of $\g_{\pm}^{[m]}$, $j\in\NN$, $i=0,\ldots,m_\frac{1}{2}$. Similarly if $s=0$, $^{\pm}h_{k_r}^{(i)}=a_{k_r,i}$, with $i=0,\ldots,m_0$ and $r=0,\, 1, \,2,\ldots$. We associate to $S$ the $\li_{\pm}^{[m]}$-module $L_S^{[m]}(\li_{\pm}^{[m]},\lambda_S)$ with central charges and labels 
\begin{equation*}
c_i=\sum_{k_r} {^{\pm}h_{k_r}^{(i)}}, \ \ \ ^{\pm}\lambda_j^{(i)}=\sum_{k_r\geqslant j} ({^{\pm}h_{k_r}^{(i)}}-\delta_{k_r,0}\, \ c_i).
\end{equation*}
Denote by $\{ s_1, \, s_2,\ldots,s_N \}$ a set of representative of equivalence classes in the set $\{ s\in\CC|a_{s,i}\neq0 \ \text{for some} \ i \}$. By Theorem \ref{teo 3.3}, the $\widehat{\D}_x^{\pm}$-module $L_{\overline{s}}^{[\overline{m}]}(\g^{[m]},\overline{\lambda})$ is irreducible for $\overline{s}=(s_1,\ldots,s_N)$ such that $s_i\in\ZZ$ implies $s_i=0$, $s_i\in\ZZ+\frac{1}{2}$ implies $s_i=\frac{1}{2}$ and $s_i\neq\pm s_j \ (mod\, \ZZ)$ for $i\neq j$. Then we have

\begin{equation*}
\Gamma_{\overline{m},\overline{s},\overline{\lambda}}(x)=\sum_i \Gamma_{m_i,s_i,\lambda_i}(x), \ \ c=\sum_i c_0^{(i)}.
\end{equation*}

Using Theorem \ref{teo 3.3} and Proposition \ref{prop 3.4}, \ref{prop 3.5} and \ref{prop 3.6}, we have proved the following result.

\begin{teor} \label{teor ultimo}
Let $V$ be an irreducible quasifinite highest weight $\widehat{\D}_x^{\pm}$-module with highest weight $\lambda$, central charge $c_0$ and
\begin{equation*}
\Gamma_\lambda(x)=\frac{\phi_{\lambda}(x)}{2\sinh(x/2)}
\end{equation*}
with $\phi_{\lambda}(x)$ an even quasipolynomial such that $\phi_{\lambda}(0)=0$, which is written in the form \eqref{(34)}. Then $V$ is isomorphic to the tensor product of the modules $L_S^{[m]}(\g^{[m]},\lambda_S)$ with distinct equivalence classes $S$.
\end{teor}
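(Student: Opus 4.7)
The plan is to construct the candidate tensor product $W=\bigotimes_S L_S^{[m_S]}(\g_{\pm}^{[m_S]},\lambda_S)$ indexed by the distinct equivalence classes $S$ appearing in \eqref{(34)}, regard it as a $\widehat{\D}_x^{\pm}$-module via the homomorphism $\widehat{\varphi}_{\overline{s}}^{[\overline{m}],\pm}$ supplied by Proposition \ref{prop extend iso sum D en sum g}, and then identify $W\cong V$ by matching the highest weight. Since an irreducible quasifinite highest weight $\widehat{\D}_x^{\pm}$-module is determined by its central charge together with the generating series $\Gamma_\lambda(x)$ (equivalently, by the labels $\Gamma_l=-\lambda(D^{l+1})$, cf.\ \eqref{(43)}), this reduces the theorem to a computation of $\Gamma_W(x)$ and $c_W$.

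First, I would verify that each tensor factor is quasifinite and that $W$ is irreducible as a $\widehat{\D}_x^{\pm}$-module. Because the quasipolynomial $\phi_\lambda$ has only finitely many nonzero coefficients $a_{s,i}$ in its expansion \eqref{(34)}, the labels $^{\ast}h_{k_r}^{(i)}$ assigned to each $\lambda_S$ vanish for all but finitely many $(k_r,i)$; hence by Proposition \ref{prop 3.1} each $L_S^{[m_S]}(\g_\pm^{[m_S]},\lambda_S)$ is quasifinite, and so is $W$ as a $\g_\pm^{[\overline{m}]}$-module. Irreducibility of $W$ as a $\widehat{\D}_x^{\pm}$-module then follows from Theorem \ref{teo 3.3}, using that the representatives $\overline{s}$ satisfy the nonoverlap condition $s_i\neq\pm s_j\pmod{\ZZ}$ for $i\neq j$ built into the construction.

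Next, I would compute $\Gamma_W(x)$. As $\sinh(xD)$ lies in the $0$-component of $\widehat{\D}_x^{\pm}$ and its image under $\widehat{\varphi}_{\overline{s}}^{[\overline{m}],\pm}$ decomposes along the direct sum $\g_\pm^{[\overline{m}]}=\bigoplus_S\g_\pm^{[m_S]}$, the generating series is additive: $\Gamma_W(x)=\sum_S\Gamma_{m_S,s_S,\lambda_S}(x)$ and $c_W=\sum_S c_0^{(S)}$. Applying Propositions \ref{prop 3.4}, \ref{prop 3.5}, \ref{prop 3.6} to the summand corresponding to each $s_S\notin\ZZ/2$, $s_S=\tfrac{1}{2}$, or $s_S=0$ respectively, and substituting the explicit labels $^{\ast}h_{k_r}^{(i)}=a_{s_S+k_r,i}$ (or the analogous assignment in the $C,D$ and $\mathcal{L}_\pm$ cases), one obtains
\[
2\sinh(x/2)\,\Gamma_W(x)+\cosh(x/2)\,c_W=\sum_{S}\sum_{k_r,\,i}a_{s_S+k_r,i}\,\eta_i(x,\,s_S-k_r).
\]
Using the symmetry \eqref{(42)} to convert each $\eta_i(x,s_S-k_r)$ into the normalized form dictated by the conventions on representatives chosen after \eqref{(34)}, the right-hand side collapses to $\phi_\lambda(x)+\cosh(x/2)\,c_0$. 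Therefore $\Gamma_W(x)=\Gamma_\lambda(x)$ and $c_W=c_0$, so $W$ and $V$ have identical highest weights and are isomorphic.

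The main obstacle is the bookkeeping in the central computation: one must carefully reconcile the three different parametrizations of exponents coming out of Propositions \ref{prop 3.4}--\ref{prop 3.6} (shifted by $s-j$, by $j+\tfrac{1}{2}$, and by $-j$ respectively) with the single normalized expansion \eqref{(34)}, exploiting $\eta_i(x,-s)=(-1)^i\eta_i(x,s+1)$ to absorb sign ambiguities. In particular one must check that the three distinct half-contributions $-\tfrac{1}{2}\cosh(x/2)c_0^{(S)}/\sinh(x/2)$ from the three propositions combine additively into a single $\cosh(x/2)\,c_0$ term, and that each pair $(s,i)$ with $a_{s,i}\neq 0$ contributes to exactly one class $S$ with the correct coefficient; the nonoverlap condition on $\overline{s}$ ensures both of these.
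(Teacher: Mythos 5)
Your proposal is correct and follows essentially the same route as the paper: the paper's own proof consists precisely of the construction of the modules $L_S^{[m]}(\g^{[m]},\lambda_S)$ from the expansion \eqref{(34)}, irreducibility of the tensor product via Theorem \ref{teo 3.3}, and the additivity $\Gamma_{\overline{m},\overline{s},\overline{\lambda}}(x)=\sum_i\Gamma_{m_i,s_i,\lambda_i}(x)$ combined with Propositions \ref{prop 3.4}--\ref{prop 3.6} to match the generating series and central charge. Your write-up is in fact more explicit than the paper's one-line conclusion about the bookkeeping with $\eta_i(x,-s)=(-1)^i\eta_i(x,s+1)$ and the merging of the $\cosh(x/2)c_0^{(S)}$ terms.
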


\begin{obse}
A different choice of the representative $s\notin\ZZ/2$ has the effect of shifting $\gl$ via the automorphism $\nu^i$ for some $i$. It is easy to see that any irreducible quasifinite highest weight module $L(\widehat{\D}_x^{\ho,\pm},\lambda)$ can by obtained as above as a unique way up to the shift $\nu$.
\end{obse}

\end{document}